\tikzstyle{na} = [baseline=-.5ex]
\pgfplotsset{compat=1.6}
\pgfplotsset{soldot/.style={color=blue,only marks,mark=*}}
\pgfplotsset{holdot/.style={color=blue,fill=white,only marks,mark=*}}
\definecolor{cobalt}{rgb}{0.0, 0.28, 0.67}
\definecolor{brightcerulean}{rgb}{0.11, 0.67, 0.84}
\DeclareRobustCommand*{\bfseries}{%
  \not@math@alphabet\bfseries\mathbf
  \fontseries\bfdefault\selectfont
  \boldmath
}
\DeclareMathOperator*{\supp}{supp}
\def\Xint#1{\mathchoice
{\XXint\displaystyle\textstyle{#1}}%
{\XXint\textstyle\scriptstyle{#1}}%
{\XXint\scriptstyle\scriptscriptstyle{#1}}%
{\XXint\scriptscriptstyle\scriptscriptstyle{#1}}%
\!\int}
\def\XXint#1#2#3{{\setbox0=\hbox{$#1{#2#3}{\int}$ }
\vcenter{\hbox{$#2#3$ }}\kern-.59\wd0}}
\def\dashint{\Xint-}
\def\Yint#1{\mathchoice
   {\YYint\displaystyle\textstyle{#1}}%
   {\YYint\textstyle\scriptstyle{#1}}%
   {\YYint\scriptstyle\scriptscriptstyle{#1}}%
   {\YYint\scriptscriptstyle\scriptscriptstyle{#1}}%
    \!\iint}
\def\YYint#1#2#3{{\setbox0=\hbox{$#1{#2#3}{\iint}$}
    \vcenter{\hbox{$#2#3$}}\kern-.51\wd0}}
\def\longdash{{-}\mkern-3.5mu{-}} 
\def\fiint{\Yint\longdash}
  \def\thefootnote{\ifnum\c@footnote>\z@\leavevmode\lower.5ex%
      \hbox{$^{\@arabic\c@footnote)}$}\fi}
\newtheoremstyle{mystyle}
  {}
  {}
  {\itshape}
  {}
  {\bfseries }
  { }
  { }
  {\thmname{#1}\thmnumber{ #2}\thmnote{ (#3)}}
\theoremstyle{mystyle}
\renewcommand{\@secnumfont}{\bfseries}
\renewcommand\section{\@startsection{section}{2}%
  \z@{-.5\linespacing\@plus-.7\linespacing}{.5\linespacing}%
  {\large\bfseries}}
\renewcommand\subsection{\@startsection{subsection}{3}%
  \z@{.5\linespacing\@plus.7\linespacing}{-.5em}%
  {\normalfont\bfseries}}
\newtheorem{thm}{Theorem}[section]
\newtheorem{prop}[thm]{Proposition}
\newtheorem{lem}[thm]{Lemma}
\newtheorem{dfn}{Definition}
\renewcommand{\to}{\longrightarrow}
\renewcommand{\rho}{\varrho}
  \renewcommand{\geq}{\geqq}
  \renewcommand{\leq}{\leqq}
   \renewcommand{\rho}{\varrho}
\def \bR{\mathbb R}
\def\dxy{\,{\rm d}x{\rm d}y}
\def\dxt{\,{\rm d}x{\rm d}t}
\def\dxyt{\,{\rm d}x{\rm d}y{\rm d}t}
\def\dx{\,{\rm d}x}
\def\ds{\,{\rm d}s}
\def\dt{\,{\rm d}t}
\def\dy{\,{\rm d}y}
\def \d{\,{\rm d}}
\def\supp{\,{\rm supp}}
\begin{document}
\titlepage

\title[Harnack's estimate for a mixed Local-Nonlocal Doubly Nonlinear Equation]{Harnack's estimate for a mixed local-nonlocal doubly nonlinear parabolic equation}


\author{Kenta Nakamura}
\address[Kenta Nakamura]{Headquarters for Admissions and Education, Kumamoto University, Kumamoto 860-8555, Japan}
\email{kntkuma21@kumamoto-u.ac.jp}

\date{\textcolor{blue}{\texttt{May, 2022}}}

\thispagestyle{firstpage}

\setcounter{page}{1}

 
 %
\keywords{Local-Nonlocal doubly nonlinear equation, Harnack inequality, Caccioppoli type estimate, Moser's iteration}
\medskip
\subjclass[2010]{Primary: 35B45, 35M10, \quad Secondary: 35B09}

 \maketitle
 \begin{abstract}
We establish Harnack's estimates for positive weak solutions to a mixed local and nonlocal doubly nonlinear parabolic equation, of the type 
\[
\partial_t\left(|u|^{p-2}u\right)-\mathrm{div}\,\mathbf{A}\left(x,t,u,Du\right)+\mathcal{L}u(x,t)=0,
\]
where the vector field $\mathbf{A}$ satisfies the $p$-ellipticity and growth conditions and the integro-differential operator $\mathcal{L}$ whose model is the fractional $p$-Laplacian. 
All results presented in this paper are provided together with quantitative estimates.
 \end{abstract}

\tableofcontents

\section{Introduction and Main results}
In this paper, we consider the following mixed local-nonlocal doubly nonlinear equation of the form
\begin{equation}\label{maineq}
\partial_t\left(|u|^{p-2}u\right)-\mathrm{div}\,\mathbf{A}\left(x,t,u,Du\right)+\mathcal{L}u(x,t)=0\quad \mathrm{in}\,\,\Omega_T:=\Omega \times (0,T),
\end{equation}
where $p>1$ denotes the summability  and the operator $\mathbf{A}: \Omega_T \times \bR \times \bR^n \to \bR^n$ is a Carath\'{e}odory function fulfill  the $p$-ellipticity and growth conditions:
\[
\begin{cases}
\mathbf{A}(x,t,u,\xi) \cdot \xi \geq c_0 |\xi|^p, \\
\left|\mathbf{A}(x,t,u,\xi)\right|\leq c_1|\xi|^{p-1}
\end{cases}
\]
for almost every $(x,t) \in \Omega_T$ and every $(u, \xi) \in \bR\times \bR^n$ with  positive constants $c_0$ and $c_1$, where the integro-differential operator $\mathcal{L}$ is defined by
\begin{align}\label{L}
\mathcal{L}u(x,t)&:=\,\mathrm{P.V.}\int_{\mathbb{R}^n} K(x,y,t)|u(x,t)-u(y,t)|^{p-2}(u(x,t)-u(y,t))\dy\notag \\[3mm]
&=\lim_{\varepsilon \searrow 0}\int_{\mathbb{R}^n \setminus B_\varepsilon(x)} K(x,y,t)|u(x,t)-u(y,t)|^{p-2}(u(x,t)-u(y,t))\dy,
\end{align}
where the symbol $\mathrm{P.V.}$ means ``in the principal value sense''. Further, the kernel $K(x,y,t)$ is assumed to be a nonnegative measurable function on $\bR^n \times \bR^n \times (0,T)$ fulfilling
\[
K(x,y,t)=K(y,x,t)
\]
and
\[
\dfrac{\Lambda^{-1}}{|x-y|^{n+sp}} \leq K(x,y,t) \leq \dfrac{\Lambda}{|x-y|^{n+sp}}
\]
for almost every $(x,y,t) \in \bR^n \times \bR^n \times (0,T)$, with  $\Lambda \geq 1$ being a constant and $s \in (0,1)$ being fractional differentiability.
\medskip

Our equation~\eqref{maineq} contains the prototype equation
\begin{equation}\label{maineq2}
\partial_t\left(|u|^{p-2}u\right)-\triangle_pu+(-\triangle)_p^su=0,
\end{equation}
where $\triangle_p u :=\mathrm{div}\left(|Du|^{p-2}Du\right)$ denotes the classical $p$-Laplacian with  $Du$ being the derivative with respect to the space-variable $x$. We know that $\triangle_p u$ is the gradient vector field of the $p$-energy $\displaystyle W^{1,p}_0(\Omega) \ni u \mapsto \int_\Omega |Du|^p\dx$. Analogously, the fractional $p$-Laplace operator $(-\triangle)_p^s u$, defined by~\eqref{L} with $\Lambda=1$, can be understood as the gradient vector field $\nabla \mathcal{E}(u)$ on $W^{s,p}_0(\Omega)$, where $\mathcal{E}(u)$ denotes an energy functional on $W^{s,p}_0(\Omega)$, defined by
\[
W^{s,p}_0(\Omega) \ni u \quad \mapsto \quad \mathcal{E}(u):=\frac{1}{p}\iint_{\bR^n \times \bR^n} \frac{|u(x)-u(y)|^p}{|x-y|^{n+sp}}\dxy.
\]
Here the fractional Sobolev space $W^{s,p}_0(\Omega)$ (see e.g.~\cite{Hitchhiker} for details) is given by
\begin{equation*}
W_0^{s,p}(\Omega):=\Big\{ \|u\|_{W^{s,p}(\bR^n)}<\infty \,:\,u=0 \,\,\,\textrm{a.e.\,\,in}\,\,\mathbb{R}^n \setminus \Omega \Big\},
\end{equation*}
where the norm $\|\cdot \|_{W^{s,p}(\bR^n)}$ is defined by
\[
\|u\|_{W^{s,p}(\bR^n)}:=\left(\int_{\bR^n}|u(x)|^p\dx\right)^{\frac{1}{p}}+\left(\iint_{\bR^n \times \bR^n}\dfrac{|u(x)-u(y)|^p}{|x-y|^{n+sp}}\dxy \right)^{\frac{1}{p}}.
\]
It is worth remarking that, for a Sobolev function $u \in W^{1,p}_{\textrm{loc}}(\Omega)$, there holds that
\[
\lim_{s\nearrow1}\,(1-s)\iint_{B_\rho \times B_\rho}\frac{|u(x)-u(y)|^p}{|x-y|^{n+sp}}\dxy=C(n,p)\int_{B_\rho}|Du(x)|^p\dx \qquad \forall B_\rho \Subset \Omega,
\]
which can be seen in~\cite{BBM}. Furthermore, as expected, the solution of $(-\triangle)_p^su=0$ converges strongly to $-\triangle_pu=0$ in $L^p(\Omega) \cap W^{1,p}_{\textrm{loc}}(\Omega)$ as $s\nearrow 1$. We refer to~\cite[Sect.1.4, Comments]{BL} for a precise description of the limit case $s\nearrow 1$.
The fundamental tools and regularity theory of the fractional Sobolev space and interesting nonlocal problems during the last decades are addressed in the comprehensive literatures~\cite{Hitchhiker, KP} and references therein.
\medskip

The prototype equation~\eqref{maineq2} is considered as the mixed type of the so-called~\emph{Trudinger equation}
\begin{equation}\label{Trudinger}
\partial_t\left(|u|^{p-2}u\right)-\triangle_p u=0
\end{equation}
and the~\emph{nonlocal Trudinger equation}
\begin{equation}\label{Frac. Trudinger}
\partial_t\left(|u|^{p-2}u\right)+(-\triangle)_p^s u=0.
\end{equation}
One of the features of Eq.s~\eqref{Trudinger} and~\eqref{Frac. Trudinger} is the homogeneity, that is, the solution to~\eqref{Trudinger} or~\eqref{Frac. Trudinger} can be scaled by any scale factor. As seen in the literatures~\cite{BDKS1, BDKS2, BDL, Kuusi-Misawa-Nakamura1, Kuusi-Misawa-Nakamura2}, this homogeneity is advantage in De Giorgi's measure theoretic lemmata.
In 1968, by the use of Moser's iteration, Trudinger first proved a Harnack inequality for nonnegative solutions to~\eqref{Trudinger} in his pioneer work~\cite{Trudinger}. Later, in the case $p>2$, Gianazza and Vespri~\cite{GV} extended the Trudinger's result to more general equation, replaced $\triangle_p u$ by $\mathbf{A}=\mathbf{A}(x,t,u,Du)$, where the vector field $\mathbf{A}$ satisfies the $p$-ellipticity and growth conditions. In a doubling Borel measure framework, Kinnunen and Kuusi~\cite{Kinnunen-Kuusi} succeeded proving a Harnack inequality for positive weak solutions to~\eqref{Trudinger}. Their proofs are based on the Moser scheme combined with the Sobolev and Caccioppoli type inequalities. 
\medskip  
 
For the nonlocal parabolic problem, there are many literatures~\cite{AABP, BLS, DZZ, KP, KS, Mazon-Rossi-Toledo, Puhst, Vazquez} and references therein about a nonlocal parabolic equation of the form
\begin{equation}\label{nonlocal eq.}
\partial_t u(x,t)+\mathcal{L}u(x,t)=0,
\end{equation}
although we will give the brief overview of literatures related to the regularity results for~\eqref{nonlocal eq.}. 
Kassmann and Schwab~\cite{KS} showed a weak Harnack inequality and the H\"{o}lder regularity for solutions of the nonlocal heat equation~\eqref{nonlocal eq.} with $p=2$ and extra force term $f \in L^\infty$. Alternatively, Kim~\cite{Kim} showed a Harnack inequality with nonlocal tail for~\eqref{nonlocal eq.} with $p=2$. Later, Str\"{o}mqvist ~\cite{Stromqvist1, Stromqvist2} obtained the local boundedness and a Harnack inequality with nonlocal tail for weak solutions to~\eqref{nonlocal eq.} in the case $p>2$. As far as we know, this is a first contribution of Di Castro, Kuusi and Palatucci~\cite{DiCKP1, DiCKP2} in the parabolic setting. Alternatively, in the case $p>2$, Ding, Zhang and Zhon~\cite{DZZ} proved the local boundedness and the H\"{o}lder regularity for weak solutions to~\eqref{nonlocal eq.} with a source term $f=f(x,t,u)$ satisfying some structural conditions. Very recently, Brasco, Lindgren and Str\"{o}mqvist~\cite{BLS} showed the H\"{o}lder regularity for (local) weak solutions in the case $p \geq 2$, whose proof is completely different to previous approaches in view of using the iterated discrete differentiation method together with a Morrey type embedding. On the contrast, in the doubly nonlinear setting, Banerjee, Garain and Kinnunen~\cite{BGK} first proved with the local boundedness of positive solutions to~\eqref{Frac. Trudinger}. 
The regular form of~\eqref{maineq}, that is, 
\begin{equation}\label{mixed}
\partial_tu-\triangle_p u+(-\triangle)^s_pu=0,
\end{equation}
where the fractional order $s \in (0,1)$ and summability $p>1$, is motivated by not a just purely mathematical interest, but also biological modeling. Indeed, there is an application of the mixed local and nonlocal operator $u \mapsto -\triangle_pu+(-\triangle)_p^s u$ to a logistic equation from the viewpoint of biological problems, see~\cite{DPLV} and references therein. Very recently, Garain and Kinnunen ~\cite{GK2, GK3} showed a Harnack estimate and the local H\"{o}lder regularity for weak solutions to~\eqref{mixed} in the case $p>1$. Alternatively, Fang, Shang and Zhang~\cite{FSZ} showed the local boundedness and the H\"{o}lder regularity for weak solutions to~\eqref{mixed}.
\medskip

To the best of our knowledge, this paper contributes to new results for positive weak solutions to our mixed local-nonlocal doubly nonlinear parabolic equation~\eqref{maineq} and the technical novelties of this paper are the local and nonlocal mollification arguments with a detailed description. Although, as long as we employ the approach in this paper, it is worth remarking that, the positivity condition of solutions cannot be removable readily, because the power nonlinearity with respect to the possibly sign-changing solution itself makes the situation more difficult. Therefore we need to employ another approach like the so-called ``expansion of positivity''. This phenomenon often occurs in the usual doubly nonlinear parabolic equation of the form
\[
\partial_t\left(|u|^{q-1}u\right)-\triangle_p u=0,\quad q>0,\quad p>1.
\]
In the doubly nonlinear framework, we make the full use of the De Giorgi's measure theoretic approach because it is very flexible. We refer to~\cite{BDKS1, BDKS2, BDL, Kuusi-Misawa-Nakamura1, Kuusi-Misawa-Nakamura2} for a detailed description.
\medskip

Before formulating the main results, we need to introduce the notion of weak solutions to~\eqref{mainthm} as follows.
Let $\mathcal{T}$ be the class of test functions defined by
\[
\mathcal{T} : = \left\{ \varphi \in L^\infty(0,T\,; L^p(\Omega)) \cap L^p (0, T\,; W^{1, p}_0 (\Omega)) \,\,\Bigg|
\left.\begin{array}{c}\partial_t \varphi \in L^p(\Omega_T), \\[1mm]\varphi (x,0)= \varphi (x,T)=0 \quad \textrm{a.e.}\,\, x \in \Omega \end{array}\right.
\right\},
\]
where the Sobolev space $W_0^{1,p}(\Omega)$ with zero boundary value is defined by
\[
W^{1,p}_0(\Omega):=\left\{u \in W^{1,p}(\Omega): u=0\,\,\textrm{in}\,\,\bR ^n\setminus \Omega\right\}.
\]
\begin{dfn}[Weak solution]\label{weak sol}\normalfont
Suppose that the vector field $\mathbf{A}$ and the nonlocal operator $\mathcal{L}$ satisfy the conditions~\eqref{S1}--\eqref{S3}. A measurable function $u=u(x,t)$ defined on $\bR^n_T:=\bR^n \times (0,T)$  in the class 
\[u \in L^\infty(0,T\,; L^p(\Omega)) \cap L^p(0,T\,; W^{1,p}(\Omega)) \cap L^\infty(0,T\,; L^{p-1}_{sp}(\bR^n))
\]
is a weak sub(super)-solution to~\eqref{maineq} iff
\begin{align}\label{D2}
&\iint_{\Omega_T}\left(-|u|^{p-2}u\cdot \partial_t\phi+\mathbf{A}\left(x,t,u,Du\right)\cdot D\phi\right)\dxt \notag\\[2mm]
&+\int_0^T\iint_{\bR^n \times \bR^n}U(x,y,t)K(x,y,t)\left(\phi(x,t)-\phi(y,t)\right)\dxyt 
\!\!\!\!\!\!\!\!\!\!\!\!\!\!\!\!\!\!\!\!\!\!\!\!\!\!\!\!
\begin{split}
&\,\leq\\[-2mm]
&(\geq)
\end{split}
\,\,0
\end{align}
holds for every nonnegative testing function $\phi \in \mathcal{T}$ with the shorthand notation
\[
U(x,y,t):=|u(x,t)-u(y,t)|^{p-2}(u(x,t)-u(y,t)).
\]
We say that $u$ is a weak solution to~\eqref{maineq} if and only if $u$ is simultaneously a weak super and subsolution to~\eqref{maineq}.
\end{dfn}
We remark that, following the argument as in~\cite{BDV, Sturm}, the time continuity in $L^p$ for weak solutions $u$ to~\eqref{maineq} in the sense of Definition~\ref{weak sol} can be derived, that is,
\[
u \in C([0,T]\,;L^p(\Omega)).
\]
The precise proof is seen in~\cite[Proposition 3.4]{Nakamura}.
\medskip

We are ready to take on the main theorem as follows.
\begin{thm}[Weak Harnack estimate]\label{mainthm}
Let $p >1 $, $s \in (0,1)$ and fix the power $q$ with $0<q<\frac{n+p}{n}(p-1)$. Suppose that the vector field $\mathbf{A}$ and the integro-differential operator $\mathcal{L}$ satisfy the conditions~\eqref{S1}--\eqref{S3}. Suppose further that a weak supersolution $u$  to~\eqref{maineq} in the sense of Definition~\ref{weak sol} satisfies $u \geq m>0$ in $\bR^n \times (0,T)$. Then, for any $\delta \in (0,1)$ there exists a constant $C\equiv C(n,s,p,c_0,c_1,\Lambda, \delta,q)$ such that
\[
\left(\fiint_{Q_{\delta\rho}^-(z_0)}u^q\dxt\right)^{\frac{1}{q}} \leq C \inf_{Q^+_{\delta \rho}(z_0)}u
\]
holds whenever concentric space-time cylinders $Q_{\delta\rho}^\pm (z_0) \subset Q_{\rho} (z_0) \Subset \Omega_T$ with $0<\rho \leq 1$.
\end{thm}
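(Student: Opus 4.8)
The plan is to implement Moser's iteration scheme adapted to the doubly nonlinear, mixed local-nonlocal setting, following the classical strategy of Trudinger and its refinements by Kinnunen--Kuusi and Gianazza--Vespri. First I would establish the main technical ingredient: an energy (Caccioppoli-type) inequality for powers of the supersolution. Testing the weak formulation~\eqref{D2} with $\phi = \eta^p u^{-\beta}$ for suitable $\beta > 0$ (after the usual mollification in time via Steklov averages or the exponential mollification mentioned among the paper's technical novelties, to make $\partial_t(|u|^{p-2}u)$ a legitimate object), one obtains, for the \emph{negative} powers $v = u^{-\gamma}$ with $\gamma = (\beta - 1 + p)/p > 0$, a bound of the form
\[
\sup_{t} \int_{B_{\sigma\rho}} v^{?}\,\eta^p\dx + \int \int |D(\eta v)|^p \dxt + \text{(nonlocal Gagliardo term)} \leq \frac{C}{((\tau-\sigma)\rho)^p}\int\int v^p\dxt + \text{(tail term)},
\]
where the nonlocal contribution is handled by the algebraic inequalities for $|a-b|^{p-2}(a-b)$ applied to $U(x,y,t)$ against the test function increments $\phi(x,t)-\phi(y,t)$, exactly as in the fractional $p$-Laplacian literature~\cite{DiCKP1, DiCKP2, BGK}; crucially the nonlocal term carries a good sign and the "bad" part is absorbed using that $u \geq m > 0$ so negative powers are bounded. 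The doubly nonlinear term $\partial_t(|u|^{p-2}u)$ against $\eta^p u^{-\beta}$ produces, after an antiderivative computation, precisely the $L^\infty$-in-time control of a power of $v$, which is where positivity and the power structure $q < \frac{n+p}{n}(p-1)$ enter.

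Next I would run the iteration for \emph{negative} powers to obtain a bound $\displaystyle \operatorname*{ess\,sup}_{Q^+_{\delta\rho}} u^{-1} \leq C\left(\fiint_{Q} u^{-q_0}\dxt\right)^{-1/q_0}$ for some $q_0 > 0$, i.e., $\left(\fiint u^{-q_0}\right)^{-1/q_0} \leq C \inf_{Q^+} u$; this is the standard sup-bound of Moser iteration applied to $v = u^{-\gamma}$, combining the energy inequality with the parabolic Sobolev embedding (the nonlocal Gagliardo seminorm only helps here). The remaining and genuinely delicate step is the \emph{crossover lemma}: one must connect a negative power mean $\left(\fiint u^{-q_0}\right)^{-1/q_0}$ on $Q^+$ with a positive power mean $\left(\fiint u^{q}\right)^{1/q}$ on $Q^-$. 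Following Moser, this rests on a parabolic John--Nirenberg / logarithmic estimate: testing~\eqref{D2} with $\phi = \eta^p u^{1-p}$ yields that $w := -\log u$ satisfies, modulo a function of $t$ alone, a $\mathrm{BMO}$-type bound in the space variable on each time slice plus a monotonicity in $t$, from which one extracts that $\fiint_{Q^-} e^{q_0 w} \cdot \fiint_{Q^+} e^{-q_0 w} \leq C$ for a small enough exponent $q_0$. Here the nonlocal term must again be controlled: the increment $w(x)-w(y) = \log(u(y)/u(x))$ interacts with the kernel, and one uses $|\log a - \log b|$ estimates together with $u \geq m$ to keep the tail finite; this is where the paper's "nonlocal mollification argument with detailed description" presumably does the heavy lifting.

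Finally, I would chain the three pieces: the positive-power iteration (lower exponents to $q$, using the flexibility $0 < q < \frac{n+p}{n}(p-1)$ which is the natural threshold forced by the parabolic Sobolev exponent and the doubly nonlinear scaling), the logarithmic crossover, and the negative-power sup-bound, to conclude
\[
\left(\fiint_{Q^-_{\delta\rho}(z_0)} u^q \dxt\right)^{1/q} \leq C \left(\fiint_{Q^+_{\delta\rho}(z_0)} u^{-q_0}\dxt\right)^{-1/q_0} \leq C \inf_{Q^+_{\delta\rho}(z_0)} u,
\]
tracking the dependence of constants on $n, s, p, c_0, c_1, \Lambda, \delta, q$ and on the ratio of cylinder radii, and noting that the restriction $\rho \leq 1$ is used only to bound lower-order and tail contributions uniformly. \textbf{The main obstacle} I anticipate is not the local Moser machinery (which is by now routine) but rather the simultaneous treatment of the nonlocal tail terms in \emph{both} the energy inequality and the logarithmic estimate while preserving the homogeneity/scaling that the doubly nonlinear structure demands — in particular, ensuring that the nonlocal contributions do not destroy the precise power-mean exponents and can be absorbed or estimated via the hypothesis $u \geq m > 0$ together with the mollification scheme.
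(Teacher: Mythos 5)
Your outline captures the correct broad machinery — energy/Caccioppoli inequality for powers, logarithmic estimate, Moser iteration for positive and negative exponents, passing $u\mapsto u^{-1}$, and the need to control nonlocal tail and evolutionary terms under $u\geq m>0$ — but it diverges from the paper, and has a genuine gap, precisely in the crossover step.

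You propose to bridge negative-power means on $Q^+$ with positive-power means on $Q^-$ via a parabolic John--Nirenberg argument yielding exponential integrability of $w=-\log u$, i.e., a bound of the form $\fiint_{Q^-}e^{q_0w}\cdot\fiint_{Q^+}e^{-q_0w}\leq C$. However, the logarithmic Caccioppoli estimate that is actually available in this doubly nonlinear setting (the paper's Lemma~\ref{log-type Caccioppoli} and Proposition~\ref{log-type est.}) yields only a \emph{polynomial} measure decay
\[
\Big|Q_{\sigma\rho}^\mp(z_0)\cap\{\pm(\log u-\beta)>\lambda\}\Big|\;\leq\;\frac{\overline{C}}{\lambda^{p-1}}\,\big|Q_{\sigma\rho}^\mp(z_0)\big|,
\]
which decays like $\lambda^{-(p-1)}$ and is far too weak to directly imply exponential integrability of $\log u$. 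A standard John--Nirenberg lemma requires genuine BMO control (equivalently, exponential level-set decay) and does not follow from this estimate; establishing a parabolic BMO bound for $\log u$ in the doubly nonlinear and mixed local-nonlocal framework is not part of what your energy inequality produces. This is where your plan would stall.

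The paper instead uses the Bombieri--Giusti abstract Harnack machinery (Section~\ref{Sect. 5}, Step~2), which is engineered precisely to work with a measure estimate of polynomial type. Concretely: setting $v^\pm$ via the mean $\beta$ of $\log u(\cdot,t_0)$, applying Lemma~\ref{supsub} to convert $u^{-1}$ into a genuine subsolution of a companion equation, one feeds the polynomial log estimate together with the local boundedness / reverse H\"older estimates (Propositions~\ref{reverse Holder} and~\ref{local bounds}) into a logarithmic functional $\Phi(\tau)$, obtains a recursion $\Phi(\sigma)\leq\tfrac{3}{4}\Phi(\tau)+\dots$ for $\delta\leq\sigma<\tau\leq\tfrac{1+\delta}{2}$, and closes with the Giaquinta--Giusti iteration lemma (Lemma~\ref{iteration lemma}). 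No exponential integrability of $\log u$ is ever established or needed. If you want to carry out a John--Nirenberg-style argument, you would first need to upgrade the level-set decay to exponential — an ingredient the current energy estimates do not supply — or adopt the Bombieri--Giusti route as the paper does.

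Two smaller remarks: (i) You treat negative powers $v=u^{-\gamma}$ directly; the paper instead proves via Lemma~\ref{supsub} that $u^{-1}$ is a subsolution (with $\widetilde{\mathbf{A}}$ having the same ellipticity constants), so that the \emph{one} local boundedness result for subsolutions can be reused — in the nonlocal setting this is cleaner than re-deriving the whole $L^\infty$ iteration for $u^{-\gamma}$, since one needs to track the sign structure of $U(x,y,t)$ under the substitution. (ii) You correctly identify the threshold $q<\tfrac{n+p}{n}(p-1)$ as the Sobolev-dictated range; in the paper this surfaces as the restriction $q_i<p-1$ under iteration in Proposition~\ref{reverse Holder}, because the Caccioppoli constant has singularities at $\varepsilon=0$ and $\varepsilon=p-1$.
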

\medskip

As a by-product of Theorem~\ref{mainthm}, we have the following theorem.
\begin{thm}[Harnack estimate]\label{mainthm2}
Suppose that the vector field $\mathbf{A}$ and the integro-differential operator $\mathcal{L}$ satisfy the conditions~\eqref{S1}--\eqref{S3}. With $p>1$, $s \in (0,1)$  let $u$ be a weak solution to~\eqref{maineq} in the sense of Definition~\ref{weak sol} fulfilling $u \geq m>0$ in $\bR^n \times (0,T)$. Then, for any $\sigma \in (0,1)$ there exists a constant $C\equiv C(n,s,p,c_0,c_1,\Lambda,\sigma)$ such that
\[
\sup_{Q^-_{\sigma \rho}(z_0)}u \leq C\inf_{Q^+_{\sigma \rho}(z_0)}u
\]
holds whenever concentric space-time cylinders $Q_{\sigma\rho}^\pm (z_0) \subset Q_{\rho} (z_0) \Subset \Omega_T$ with $0<\rho \leq 1$.
\end{thm}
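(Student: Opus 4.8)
The plan is to derive the Harnack estimate of Theorem~\ref{mainthm2} directly from the weak Harnack estimate of Theorem~\ref{mainthm}, combined with a local boundedness (sup-bound) estimate for positive weak subsolutions. Since $u$ is a weak solution, it is simultaneously a weak supersolution and a weak subsolution satisfying $u\ge m>0$, so both ingredients apply. The key point is that Theorem~\ref{mainthm} controls an $L^q$-average of $u$ on a backward cylinder $Q^-_{\delta\rho}$ from below by $\inf_{Q^+_{\delta\rho}}u$, so what remains is to bound $\sup_{Q^-_{\sigma\rho}}u$ from above by that same $L^q$-average on a slightly larger backward cylinder.

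First I would invoke the local sup-bound (Moser-type) estimate for positive weak subsolutions of~\eqref{maineq}: on nested backward cylinders $Q^-_{r_1}\subset Q^-_{r_2}$ one has an estimate of the shape
\[
\sup_{Q^-_{r_1}(z_0)}u \le C\,(\text{data})\Big(\fiint_{Q^-_{r_2}(z_0)}u^{q}\dxt\Big)^{\frac1q}+(\text{tail term}),
\]
where the nonlocal tail term is controlled because $u$ is bounded below and, via the structure conditions and the quantitative Caccioppoli estimates established earlier in the paper, the tail contribution over the relevant region is dominated by the same $L^q$-average (here the homogeneity of the equation, i.e. scaling invariance, is what lets the tail be absorbed rather than producing an additive constant). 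This is exactly the kind of estimate recorded in the local boundedness section of the paper, and one chooses the radii so that both $Q^-_{r_1}$ and $Q^-_{r_2}$ sit inside $Q_\rho(z_0)$.

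Next I would fix $\sigma\in(0,1)$ and pick $\delta\in(\sigma,1)$, say $\delta=\tfrac{1+\sigma}{2}$, so that $Q^-_{\sigma\rho}(z_0)\Subset Q^-_{\delta\rho}(z_0)\subset Q_\rho(z_0)$. Applying the sup-bound with $r_1=\sigma\rho$, $r_2=\delta\rho$ gives
\[
\sup_{Q^-_{\sigma\rho}(z_0)}u \le C\Big(\fiint_{Q^-_{\delta\rho}(z_0)}u^{q}\dxt\Big)^{\frac1q},
\]
with $C$ depending only on $n,s,p,c_0,c_1,\Lambda,\sigma$ (the dependence on the chosen $q$ and on $\delta$ collapses into the $\sigma$-dependence once $q$ is fixed to a convenient value in $(0,\tfrac{n+p}{n}(p-1))$). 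Then applying Theorem~\ref{mainthm} with this $\delta$ and the same $q$ yields $\big(\fiint_{Q^-_{\delta\rho}}u^q\big)^{1/q}\le C\inf_{Q^+_{\delta\rho}}u$, and since $Q^+_{\sigma\rho}(z_0)\subset Q^+_{\delta\rho}(z_0)$ we have $\inf_{Q^+_{\delta\rho}}u\le\inf_{Q^+_{\sigma\rho}}u$ — wait, the inclusion is the other way for infima, so one instead arranges the forward cylinders so that $Q^+_{\delta\rho}\supset Q^+_{\sigma\rho}$ gives $\inf_{Q^+_{\delta\rho}}u\le\inf_{Q^+_{\sigma\rho}}u$; chaining the three inequalities gives $\sup_{Q^-_{\sigma\rho}}u\le C\inf_{Q^+_{\sigma\rho}}u$ as claimed.

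The main obstacle is the nonlocal tail term in the sup-bound estimate: unlike the purely local doubly nonlinear case, one must ensure the tail integral $\int |u(y,t)|^{p-1}|y-z_0|^{-n-sp}\dy$ over $\bR^n\setminus B_\rho$ does not ruin the multiplicative (rather than additive) form of the Harnack inequality. This is handled by exploiting that the equation is homogeneous of degree $p-1$ (so the tail scales correctly and can be absorbed into the $L^q$-average after a standard interpolation/iteration over a dyadic sequence of radii), together with the lower bound $u\ge m$; all of this machinery — the Caccioppoli inequalities with tail, the Sobolev embedding, and the Moser iteration — is precisely what is assembled in the earlier sections under conditions~\eqref{S1}--\eqref{S3}. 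The remaining bookkeeping (choice of $\delta$, relabeling constants, checking the cylinder inclusions) is routine.
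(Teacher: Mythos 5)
Your approach matches the paper's proof of Theorem~\ref{mainthm2} exactly: apply Theorem~\ref{mainthm} with $\delta=\tfrac{1+\sigma}{2}$ to bound the $L^q$-average on $Q^-_{\frac{1+\sigma}{2}\rho}(z_0)$ by $\inf_{Q^+_{\frac{1+\sigma}{2}\rho}(z_0)}u$, then use the local sup-bound for positive subsolutions (Proposition~\ref{local bounds}) to control $\sup_{Q^-_{\sigma\rho}(z_0)}u$ by that same $L^q$-average, and chain using the inclusion $Q^+_{\sigma\rho}\subset Q^+_{\frac{1+\sigma}{2}\rho}$, which gives $\inf_{Q^+_{\frac{1+\sigma}{2}\rho}}u\le\inf_{Q^+_{\sigma\rho}}u$.

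One caveat about your narrative: the lengthy discussion of the nonlocal tail term as ``the main obstacle'' is a red herring in this setting. The local boundedness result actually invoked here, Proposition~\ref{local bounds}, carries \emph{no} tail term at all: it is proven for subsolutions satisfying $u\ge m>0$ globally in $\bR^n$, and it is precisely that global positivity which lets the long-range part of the Caccioppoli estimate (Lemma~\ref{Caccioppoli2}) be bounded by a local term rather than a tail. The tail appears only in Theorem~\ref{local bounds with tail}, which is a separate result for possibly sign-changing subsolutions with $p\ge 2$ and plays no role in the proof of Theorem~\ref{mainthm2}. So there is no ``absorption of the tail via homogeneity'' step to perform; the subtlety you flag does not arise once you pick the right lemma.
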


\medskip

\noindent
\textbf{Structure of the paper}
\medskip

In Sect.~\ref{Sect. 2}, we list the notation that will be used throughout the paper, then state the structure assumption and finally collect some auxiliary tools. In Sect.~\ref{Sect. 3}, we derive quantitative estimates for supersolutions; in particular, we derive the Reversed H\"{o}lder inequality (Proposition~\ref{reverse Holder})  in Sect. ~\ref{Sect. 3.1}, then prove the log-type estimates (Lemma~\ref{log-type Caccioppoli} and Proposition~\ref{log-type est.}) and a useful lemma (Lemma~\ref{supsub}) in Sect.~\ref{Sect. 3.2}. Sect.~\ref{Sect. 4} is devoted to the local boundedness  (Proposition~\ref{local bounds}) for subsolutions. Sect.s~\ref{Sect. 5} and~\ref{Sect. 6} give the proofs of Theorems~\ref{mainthm} and~\ref{mainthm2}, respectively. In Appendix~\ref{Appendix A} we give the full proof of a Caccioppoli type estimate for supersolutions (Lemma~\ref{Caccioppoli1}). Finally, in Appendix~\ref{Appendix B}, we prove Lemmata~\ref{log-type Caccioppoli lemma} and~\ref{supsub lemma}.
\medskip

\noindent
\textbf{Acknowledgements}
\medskip

Kenta Nakamura acknowledges the  support by Grant-in-Aid for Young Scientists Grant \#21K13824 (2021) at Japan Society for the Promotion of Science. 
\section{Notation and preliminary materials}\label{Sect. 2}

In this section, we will record our notation, describe the structural assumptions on~\eqref{maineq} and collect some auxiliary materials that will be helpful at various stages of the paper.

\subsection{Notation}
In this brief section we introduce the notation that will be used.
In the present paper,  we shall fix exponents $p >1$ and $0<s<1$, while a bounded open subset $\Omega$ of $\bR^n$, with $n \geq 2$. For $T \in (0,\infty)$, let $\Omega_T:=\Omega \times (0,T)$ be a space-time cylinder. We shall denote in a standard way $B_\rho(x_0):=\left\{ x \in \bR^n: |x-x_0|<\rho\right\}$, the open ball with center $x_0 \in \bR^n$ and radius $\rho>0$. We omit denoting $B_\rho$ instead of $B_\rho(x_0)$ when being center from the context, all the balls considered will share the same center. For $z_0=(x_0,t_0) \in \bR^n \times \bR$, let us define space-time cylinders as follows
\begin{align*}
Q^{-}_{\rho}(z_0)&:=B_\rho(x_0)\times (t_0-\rho^p,t_0),\\[2mm]
Q^{+}_{\rho}(z_0)&:=B_\rho(x_0)\times (t_0,t_0+\rho^p), \\[2mm]
Q_{\rho}(z_0)&:=B_\rho(x_0)\times (t_0-\rho^p,t_0+\rho^p),
\end{align*}
where $\rho>0$ is a radius and $\rho^p$ is a time length. The $\lambda$-dilate of $Q^{\pm}_\rho(z_0)$ and $Q_{\rho}(z_0)$ with $\lambda>0$ are denoted by $\lambda Q^{\pm}_\rho(z_0):=Q^{\pm}_{\lambda \rho}(z_0)$ and $\lambda Q_\rho(z_0):=Q_{\lambda \rho}(z_0)$, respectively. 
\medskip

As customary, we write $w(t):=w(\cdot, t)$, which means the time-slice value at time $t \in (0,T)$ for functions $w$, defined on a space-time region.
\medskip

With $\mathcal{B} \subset \bR^k,\,k \geq 1$ being a measurable subset with finite and positive measure $|\mathcal{B}|>0$, and with $g:\mathcal{B} \to \bR$, being an integral function, we denote by
\[
(g)_{\mathcal{B}}\equiv \dashint_{\mathcal{B}}g(x)\dx:=\frac{1}{|\mathcal{B}|}\int_{\mathcal{B}}g(x)\dx
\]
its integral average. In this paper, all the measures addressed will be the Lebesgue measure on Euclidean space $\bR^k$, with $k \geq 1$.
\medskip

For open sets $E$ and $F$ in $\bR^k$,\,$k \geq 1$,  the symbol $E \Subset F$ denotes the closure of $E$ is compactly contained in $F$.

%
%
For a set $S \subset \bR^{n} \times \bR$ we write
\begin{align*}
S \cap \{u  >k\}:=\left\{(x,t) \in S: u(x,t)>k\right\}, \\[2mm]
S \cap \{u  <k\}:=\left\{(x,t) \in S: u(x,t)<k\right\}.
\end{align*}
Next, we briefly recall the nonlocal tail that naturally appears when dealing with nonlocal operators like $\mathcal{L}$. This nonlocal  quantity is originally introduced in~\cite{DiCKP1}.

Let $p \in [1,\infty)$ and $s \in (0,1)$. We quantify the \emph{nonlocal tail} in $B_\rho(x_0)$ as follows:
\[
\mathrm{Tail}(v,B_\rho(x_0)):=\left(\rho^{sp}\int_{\bR^n \setminus B_\rho(x_0)}\dfrac{|v(x)|^{p-1}}{|x-x_0|^{n+sp}}\dx\right)^{\frac{1}{p-1}}.
\]
The \emph{tail space} $L^{p-1}_{sp}(\bR^n)$ is defined by requiring that $v \in L^{p-1}_{sp}(\bR^n)$ if and only if $v \in L^{p-1}_{\mathrm{loc}}(\bR^n)$ and 
\[
\mathrm{Tail}(v,B_\rho(x_0))<\infty\qquad \forall x_0 \in \bR^n, \,\,\forall \rho>0.
\]
By definition, we see that
\[
L^{p-1}_{sp}(\bR^n):=\left\{v \in L^{p-1}_{\mathrm{loc}}(\bR^n): \int_{\bR^n}\frac{|v(x)|^{p-1}}{(1+|x|)^{n+sp}}\dx<\infty \right\}.
\]
The \emph{parabolic nonlocal tail} of a function $v \in L^\infty(t_0-\rho^p, t_0+\rho^p\,;\,L^{p-1}_{\mathrm{loc}}(\bR^n))$  is defined by
\[
\mathrm{Tail}_\infty(v, Q_\rho(z_0)):=\left(\sup_{t_0-\rho^p<t<t_0+\rho^p}\rho^{sp}\int_{\bR^n \setminus B_\rho(x_0)}\dfrac{|v(x,t)|^{p-1}}{|x-x_0|^{n+sp}}\dx\right)^{\frac{1}{p-1}},
\]
which is presented in~\cite{Stromqvist2}. This quantity naturally appears in Theorem~\ref{local bounds with tail}, as seen below.
\medskip

Finally, we will list the general notation. Throughout the paper, $c$, $C$, $\cdots $ denote different positive constants in a given context. Relevant dependencies on parameters will be emphasized using parentheses, e.g., $c\equiv c(n,s,p)$ means that $c$ depends on $n, s$ and $p$. For the sake of readability, the dependencies of the constants will be often omitted within the chains of estimates.
Furthermore, the equation number $(\,\cdot\,)_\ell$ denotes the $\ell$-th line of the Eq. $(\,\cdot\,)$.
\subsection{Setting}
In order to consider weak solutions to the mixed local and nonlocal doubly nonlinear parabolic equation~\eqref{maineq}, we impose the assumption that will be used in the course of paper. The vector field $\mathbf{A}=\mathbf{A}(x,t,u,\xi):\Omega_T \times \bR \times \bR^n \to \bR^n$ appearing on~\eqref{maineq} is assumed to be measurable with respect to $(x,t) \in \Omega_T$ for every $(u,\xi)  \in \bR \times \bR^n$ and continuous with respect to $(u,\xi)$ for almost everywhere $(x,t) \in \Omega_T$. We further suppose that $\mathbf{A}$ fulfills the structure condition
\begin{equation}\label{S1}
\begin{cases}
\mathbf{A}(x,t,u,\xi) \cdot \xi \geq c_0 |\xi|^p, \\
\left|\mathbf{A}(x,t,u,\xi)\right|\leq c_1|\xi|^{p-1}
\end{cases}
\end{equation}
with $p>1$ and the structure constants $c_0$ and $c_1$. The integro-differential operator $\mathcal{L}$ appearing on~\eqref{maineq} is taken in the Cauchy principal value: 
\[
\mathcal{L}u(x,t):=\,\mathrm{P.V.}\int_{\mathbb{R}^n} K(x,y,t)|u(x,t)-u(y,t)|^{p-2}(u(x,t)-u(y,t))\dy,
\]
where the kernel $K: \bR^n \times \bR^n \times (0,T) \to [0,\infty)$ is a measurable function fulfilling the symmetric property
\begin{equation}\label{S2}
K(x,y,t)=K(y,x,t)
\end{equation}
and
\begin{equation}\label{S3}
\dfrac{\Lambda^{-1}}{|x-y|^{n+sp}} \leq K(x,y,t) \leq \dfrac{\Lambda}{|x-y|^{n+sp}}
\end{equation}
with $\Lambda \geq 1$ and a fractional order $s \in (0,1)$ for every $(x,y,t) \in \bR^n \times \bR^n \times (0,T)$. Notice that, when $\Lambda=1$ the operator $\mathcal{L}$ coincides the fractional $p$-Laplacian $(-\triangle)_p^s$.
\subsection{Auxiliary materials}\label{Sect. 2.2}
In this subsection, we collect the auxiliary material used throughout the paper. %

Firstly, we recall a very useful inequality that controls the fractional integral term, which is a variant version of~\cite[Lemma 3.1]{DiCKP1}.
\begin{lem}\label{Gamma}
Let $p \geq 1$ and $\varepsilon \in (0,1]$. Then, 
\[
|a|^p\leq |b|^p+c\varepsilon|b|^p+(1+c\varepsilon)\varepsilon^{1-p}|a-b|^p.
\]
holds true whenever $a,\,b \in \bR^k,\,\,k \geq 1$, where $c=c(p):=2^p(p-1)^p$.
\end{lem}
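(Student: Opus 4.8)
The plan is to prove the inequality pointwise, reducing to a scalar statement and then using convexity of $t\mapsto|t|^p$ on $\bR^k$. First I would dispose of the trivial cases: if $a=b$ the right-hand side is $|b|^p + c\varepsilon|b|^p \geq |b|^p = |a|^p$, and if $b=0$ the inequality reads $|a|^p \leq (1+c\varepsilon)\varepsilon^{1-p}|a|^p$, which holds since $\varepsilon \in (0,1]$ forces $\varepsilon^{1-p}\geq 1$. So we may assume $a \neq b$ and $b \neq 0$. Writing $a = b + (a-b)$ and using the triangle inequality in $\bR^k$, we have $|a| \leq |b| + |a-b|$, so it suffices to prove the one-dimensional inequality $(|b|+|a-b|)^p \leq |b|^p + c\varepsilon|b|^p + (1+c\varepsilon)\varepsilon^{1-p}|a-b|^p$ for nonnegative reals. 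Setting $\beta := |b|\geq 0$ and $\gamma := |a-b| \geq 0$, the claim becomes
\[
(\beta+\gamma)^p \leq (1+c\varepsilon)\beta^p + (1+c\varepsilon)\varepsilon^{1-p}\gamma^p.
\]

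The key step is a quantitative version of the elementary inequality $(\beta+\gamma)^p \leq (1+\eta)\beta^p + C(\eta,p)\gamma^p$. I would proceed by considering two regimes according to whether $\gamma \leq \varepsilon\beta$ or $\gamma > \varepsilon\beta$. In the first regime, by the mean value theorem applied to $t\mapsto t^p$ on $[\beta,\beta+\gamma]$ we get $(\beta+\gamma)^p - \beta^p \leq p(\beta+\gamma)^{p-1}\gamma \leq p(1+\varepsilon)^{p-1}\beta^{p-1}\gamma$; if $p \geq 2$ one bounds $(1+\varepsilon)^{p-1} \leq 2^{p-1}$ and since $\gamma \leq \varepsilon\beta$ this is at most $p2^{p-1}\varepsilon\beta^p \leq c\varepsilon\beta^p$ with $c = 2^p(p-1)^p$ (using $p2^{p-1} \leq 2^p(p-1)^p$ for $p\geq 2$; for $1 \leq p < 2$ one instead uses the subadditivity $(\beta+\gamma)^p \leq \beta^p + \gamma^p \leq \beta^p + \varepsilon^{p-1}\beta^{p-1}\gamma \leq \beta^p + \varepsilon\beta^p$, which is even cleaner and absorbs into the $c\varepsilon\beta^p$ term). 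In the second regime $\gamma > \varepsilon\beta$, hence $\beta < \varepsilon^{-1}\gamma$, so $\beta + \gamma < (1+\varepsilon^{-1})\gamma \leq 2\varepsilon^{-1}\gamma$, giving $(\beta+\gamma)^p \leq 2^p\varepsilon^{-p}\gamma^p = 2^p\varepsilon\cdot\varepsilon^{1-p}\gamma^p$; since this must be compared against the coefficient $(1+c\varepsilon)\varepsilon^{1-p}$ of $\gamma^p$, one checks $2^p\varepsilon \leq 1 + c\varepsilon$ when $\varepsilon \leq 1$ — indeed $2^p\varepsilon \leq 2^p\varepsilon^0 = 2^p$ is not quite enough, so instead split: $(\beta+\gamma)^p \leq \beta^p + (\text{remainder})$ is not available here, so one keeps $(\beta+\gamma)^p \leq 2^p\varepsilon^{-p}\gamma^p$ and notes $\varepsilon^{1-p}\gamma^p$ already carries the factor $\varepsilon^{1-p}$, so we need $2^p\varepsilon^{-1} \leq 1 + c\varepsilon$, i.e. $c\varepsilon^2 + \varepsilon - 2^p \geq 0$; this fails for small $\varepsilon$, so the right bookkeeping is to not throw away the $\beta^p$ term but rather, in this regime, use $(\beta+\gamma)^p \leq (1+\varepsilon)^p\max\{\beta,\gamma\}^p \cdot$ — here I would more carefully track constants, the cleanest being: for $p\geq 1$, $(\beta+\gamma)^p \leq (1+\tfrac1\varepsilon)^{p-1}(\beta^p \cdot 1 + \gamma^p\varepsilon^{1-p})$ is false in general; the genuinely correct tool is the weighted Young/Jensen inequality $(x+y)^p \leq (1+\varepsilon)^{p-1}x^p + (1+\varepsilon^{-1})^{p-1}y^p$, obtained by applying Jensen to $t\mapsto t^p$ with weights $\frac{1}{1+\varepsilon},\frac{\varepsilon}{1+\varepsilon}$ on the points $(1+\varepsilon)x$ and $(1+\varepsilon^{-1})y$.)

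Consolidating: I would use the inequality $(x+y)^p \leq (1+\varepsilon)^{p-1}x^p + (1+\varepsilon^{-1})^{p-1}y^p$ with $x=\beta$, $y=\gamma$. Then $(\beta+\gamma)^p \leq (1+\varepsilon)^{p-1}\beta^p + (1+\varepsilon^{-1})^{p-1}\gamma^p$. For the first coefficient, $(1+\varepsilon)^{p-1} \leq 1 + c\varepsilon$ with $c = 2^p(p-1)^p$: this follows from convexity/Bernoulli-type estimates since on $\varepsilon\in(0,1]$ the function $\varepsilon\mapsto\frac{(1+\varepsilon)^{p-1}-1}{\varepsilon}$ is bounded by its value-derived constant $(p-1)2^{p-2}\cdot 2 \leq 2^p(p-1)^p$ (trivially when $p\leq 1$ the exponent is $\leq 0$ and the bound is immediate, when $p\geq 1$ one uses $(1+\varepsilon)^{p-1}\leq e^{(p-1)\varepsilon}\leq 1+(p-1)\varepsilon e^{p-1}$, coarsely bounded by $1+c\varepsilon$). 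For the second coefficient, $(1+\varepsilon^{-1})^{p-1} = \varepsilon^{1-p}(1+\varepsilon)^{p-1} \leq \varepsilon^{1-p}(1+c\varepsilon)$ by the same estimate. Combining the two bounds yields exactly $(\beta+\gamma)^p \leq (1+c\varepsilon)\beta^p + (1+c\varepsilon)\varepsilon^{1-p}\gamma^p$, which is the reduced scalar inequality, and hence the lemma.

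\textbf{Main obstacle.} The only real subtlety is getting the constant $c = 2^p(p-1)^p$ to come out cleanly from the elementary estimates on $(1+\varepsilon)^{p-1}$; the structural part of the argument — triangle inequality plus a weighted convexity inequality for $t\mapsto t^p$ — is routine. One should be careful that the stated $c$ is meant to work uniformly for all $p\geq 1$ (so that in particular it is nonnegative and the cases $p=1$, $1<p<2$, $p\geq 2$ are all covered), and that $\varepsilon^{1-p}\geq 1$ on $(0,1]$ is used to handle the degenerate case $b=0$.
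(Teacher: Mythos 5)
Your consolidated argument is correct and takes a genuinely different route from the paper. After reducing to the scalar claim $(\beta+\gamma)^p \leq (1+c\varepsilon)\beta^p+(1+c\varepsilon)\varepsilon^{1-p}\gamma^p$ via the triangle inequality, you invoke the weighted convexity inequality $(\beta+\gamma)^p\leq(1+\varepsilon)^{p-1}\beta^p+(1+\varepsilon^{-1})^{p-1}\gamma^p$ and then only need the elementary estimate $(1+\varepsilon)^{p-1}\leq 1+c\varepsilon$; since $(1+\varepsilon^{-1})^{p-1}=\varepsilon^{1-p}(1+\varepsilon)^{p-1}$, both coefficients close simultaneously. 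The paper instead writes $(|b|+|a-b|)^p-|b|^p=p\int_0^{|a-b|}(|b|+t)^{p-1}\,dt$, bounds the integrand from above, applies Young's inequality with a free parameter $\varepsilon$, and then performs a small bootstrap: it plugs in the specific value $\varepsilon=\tfrac{1}{2(p-1)}$ to absorb the $(|b|+|a-b|)^p$ term produced on the right back onto the left, obtaining $(|b|+|a-b|)^p\leq 2|b|^p+2^p(p-1)^{p-1}|a-b|^p$, and then feeds that back into the Young estimate with general $\varepsilon$. Your route avoids the self-referential manoeuvre entirely and is arguably cleaner; what it costs is having to verify the numerical constant by hand. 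The bound $(1+\varepsilon)^{p-1}\leq 1+c\varepsilon$ with $c=2^p(p-1)^p$ does hold uniformly for $p\geq1$ and $\varepsilon\in(0,1]$: your chain $(1+\varepsilon)^{p-1}\leq e^{(p-1)\varepsilon}\leq 1+(p-1)e^{p-1}\varepsilon$ reduces it to $e^{p-1}\leq 2^p(p-1)^{p-1}$, which one checks has its tightest point at $p=\tfrac32$ where the two sides are $\sqrt e\approx 1.65$ and $2$ respectively, so it holds — but it deserves an explicit line rather than the phrase ``coarsely bounded.'' Finally, the discarded two-regime attempt in your draft does not close and should be pruned; the weighted-Jensen argument is the one that stands.
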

\begin{proof}
For the reader's convenience we give the short proof that is slightly different to~\cite[Lemma 3.1]{DiCKP1}, but similar to~\cite[Lemma 4.3]{Cozzi}. 

Due to  the fundamental of calculus and Young's inequality, we infer that, for any $\varepsilon \in (0,1]$
\begin{align}\label{Gamma eq.1}
\big(|b|+|a-b|\big)^p-|b|^p &=p\int_0^{|a-b|}\left(|b|+t\right)^{p-1}\dt \notag\\[3mm]
&\leq p\big(|b|+|a-b|\big)^{p-1}|a-b| \notag\\[3mm]
&\leq (p-1)\varepsilon \big(|b|+|a-b|\big)^{p}+\varepsilon^{1-p}|a-b|^p.
\end{align}
Now, taking $\varepsilon=\frac{1}{2(p-1)}$ yields in particular that
\[
\big(|b|+|a-b|\big)^{p} \leq 2|b|^p+2^p(p-1)^{p-1}|a-b|^p.
\]
Inserting this back to~\eqref{Gamma eq.1}, we gain
\begin{align*}
\big(|b|+|a-b|\big)^p-|b|^p &\leq 2(p-1)\varepsilon |b|^p+\left[\,2^p(p-1)^{p}\varepsilon+\varepsilon^{1-p}\,\right]|a-b|^p\\[2mm]
&\leq c(p)\varepsilon|b|^p+(1+c(p)\varepsilon)\varepsilon^{1-p}|a-b|^p
\end{align*}
with $c(p)=2^p(p-1)^{p}$. Thus, this in turn implies that
\[
|a|^p-|b|^p \leq \big(|b|+|a-b|\big)^p-|b|^p \leq c(p)\varepsilon|b|^p+(1+c(p)\varepsilon)\varepsilon^{1-p}|a-b|^p,
\]
as desired.
\end{proof}
We further retrieve the algebraic estimate; the proof is in~\cite[Lemma 2.2]{Acerbi-Fusco} in the case $0<\beta<1$ and in~\cite[inequality (2.4)]{Giaquinta-Modica} in the case $\beta>1$.
\begin{lem}[Algebraic inequality I]\label{Algs}
For every $\beta>0$ there exists a constant $c(\beta)$ such that
\[
c^{-1}\Big||\xi|^{\beta-1}\xi-|\eta|^{\beta-1}\eta\Big| \leq \left(|\xi|+|\eta|\right)^{\beta-1}|\xi-\eta| \leq c\Big||\xi|^{\beta-1}\xi-|\eta|^{\beta-1}\eta\Big|
\]
holds true whenever $\xi,\eta \in \bR$.
\end{lem}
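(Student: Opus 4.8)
\textbf{Proof proposal for Lemma~\ref{Algs}.}

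The plan is to reduce the two-sided estimate to a single scalar inequality by homogeneity, and then handle the remaining one-variable inequality by a continuity/compactness argument on a compact interval together with a direct check near the endpoints. First I would record the two trivial observations: if $\xi=\eta$ both sides vanish, and both sides are invariant under the simultaneous sign change $(\xi,\eta)\mapsto(-\xi,-\eta)$ and symmetric under swapping $\xi$ and $\eta$ (the outer expression $\big||\xi|^{\beta-1}\xi-|\eta|^{\beta-1}\eta\big|$ is antisymmetric in the pair but its absolute value is symmetric, and $(|\xi|+|\eta|)^{\beta-1}|\xi-\eta|$ is manifestly symmetric). So without loss of generality I may assume $|\xi|\geq|\eta|$ and, since the case $\xi=0$ forces $\eta=0$, also $\xi>0$. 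Dividing every term by $\xi^{\beta}$ (all three expressions are positively $\beta$-homogeneous in $(\xi,\eta)$), it suffices to prove
\[
c^{-1}\Big|1-|\tau|^{\beta-1}\tau\Big| \le (1+|\tau|)^{\beta-1}|1-\tau| \le c\,\Big|1-|\tau|^{\beta-1}\tau\Big|
\qquad\text{for all }\tau\in[-1,1],
\]
where $\tau=\eta/\xi$.

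Next I would treat this reduced inequality. Introduce $f(\tau):=\dfrac{(1+|\tau|)^{\beta-1}|1-\tau|}{\big|1-|\tau|^{\beta-1}\tau\big|}$ on $[-1,1]\setminus\{1\}$; the claim is exactly that $f$ is bounded above and below by positive constants depending only on $\beta$. The function $\tau\mapsto 1-|\tau|^{\beta-1}\tau$ is continuous and strictly decreasing on $[-1,1]$ with value $0$ only at $\tau=1$, so $f$ is continuous on $[-1,1)$ and positive there; the only issue is the behaviour as $\tau\to 1$. Near $\tau=1$ the numerator behaves like $2^{\beta-1}(1-\tau)$, while by the mean value theorem applied to $g(\tau):=|\tau|^{\beta-1}\tau$ (whose derivative is $\beta|\tau|^{\beta-1}$, hence $g'(1)=\beta$) the denominator behaves like $\beta(1-\tau)$. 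Hence $f(\tau)\to 2^{\beta-1}/\beta$ as $\tau\to1^-$, so $f$ extends continuously to all of $[-1,1]$ with a strictly positive limit. A continuous, strictly positive function on the compact interval $[-1,1]$ attains a positive minimum and a finite maximum, and taking $c:=\max\{\sup f,\ 1/\inf f\}$ (which depends only on $\beta$) gives both inequalities at once.

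The main obstacle — really the only delicate point — is the endpoint analysis at $\tau=1$ (equivalently $\xi=\eta$), where numerator and denominator both vanish; everything rests on showing that the ratio has a finite positive limit there rather than blowing up or collapsing. This is handled cleanly by the mean-value/derivative computation above: the key input is simply that $g(\tau)=|\tau|^{\beta-1}\tau$ is $C^1$ on a neighbourhood of $\tau=1$ with $g'(1)=\beta>0$. One could alternatively avoid compactness altogether and argue by the elementary inequalities $\min\{\beta,1\}\,|\tau|^{\beta-1}\le \frac{|1-|\tau|^{\beta-1}\tau|}{|1-\tau|}\le \max\{\beta,1\}\,(1+|\tau|)^{\beta-1}$-type bounds obtained from writing $1-g(\tau)=\int_\tau^1 g'(r)\,dr$ and estimating $g'(r)=\beta|r|^{\beta-1}$ between its values at the endpoints of the integration interval; this gives explicit constants but is a little more bookkeeping. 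Either way, since the statement is quoted from \cite{Acerbi-Fusco} and \cite{Giaquinta-Modica}, it is legitimate to simply cite those references and omit the computation, but the sketch above is how I would reconstruct it.
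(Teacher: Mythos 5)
The paper never proves this lemma: it is stated as a standard algebraic inequality with citations to \cite[Lemma~2.2]{Acerbi-Fusco} for $0<\beta<1$ and \cite[inequality~(2.4)]{Giaquinta-Modica} for $\beta>1$, so there is no in-paper argument to compare against. Your reconstruction is a correct self-contained proof: the $\beta$-homogeneity of all three quantities, the sign symmetry $(\xi,\eta)\mapsto(-\xi,-\eta)$, and the swap symmetry legitimately reduce the claim to the one-variable inequality on $\tau=\eta/\xi\in[-1,1]$; the ratio $f(\tau)$ is continuous and strictly positive on $[-1,1)$ because $g(\tau)=|\tau|^{\beta-1}\tau$ is strictly increasing with $g(1)=1$; and the only genuine issue, the common zero at $\tau=1$, is handled correctly since $g$ is $C^1$ near $\tau=1$ with $g'(1)=\beta>0$, giving $f(\tau)\to 2^{\beta-1}/\beta$. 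Compactness of $[-1,1]$ then delivers the two-sided bound with $c(\beta)=\max\{\sup f,\,1/\inf f\}$. One caveat concerns the parenthetical alternative at the end: the displayed template $\min\{\beta,1\}\,|\tau|^{\beta-1}\le\frac{|1-|\tau|^{\beta-1}\tau|}{|1-\tau|}$ is actually false for $\beta<1$ (take $\beta=\tau=\tfrac12$: the left side is $\approx0.707$, the middle is $\approx0.586$), because for $\beta<1$ the integrand $g'(r)=\beta r^{\beta-1}$ is \emph{decreasing} on $(\tau,1)$, so its value at $r=\tau$ furnishes the \emph{upper}, not the lower, endpoint bound; and for $\beta>1$ the stated lower bound $|\tau|^{\beta-1}$ degenerates to $0$ at $\tau=0$ and cannot by itself give a uniform comparison with $(1+|\tau|)^{\beta-1}$ there. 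So the ``bookkeeping'' for that alternative is more than cosmetic — it must split on $\beta\gtrless1$ and treat $\tau\in[-1,0]$ and $\tau$ near $0$ separately — but this does not touch your main compactness argument, which is complete as written.
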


%
The above algebraic inequality allows us to derive the following useful inequality:
\begin{lem}[Algebraic inequality II]\label{Algs'}
For all $\alpha \in (1,\infty)$ there is a constant $c\equiv c(\alpha)$ such that 
\[
c^{-1} \leq \frac{(|\xi|^{\alpha-2}\xi-|\eta|^{\alpha-2}\eta)(\xi-\eta)}{  (|\xi|+|\eta|)^{\alpha-2}(\xi-\eta)^2}\leq c
\]
holds whenever $\xi,\,\eta \in \bR$ with $\xi \neq \eta$; in particular,
\begin{equation}\label{algs'}
(|\xi|^{\alpha-2}\xi-|\eta|^{\alpha-2}\eta)(\xi-\eta) \geq 0\qquad \forall \xi,\,\eta \in \bR.
\end{equation}
\end{lem}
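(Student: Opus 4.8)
\textbf{Proof plan for Lemma~\ref{Algs'}.}
The plan is to reduce the two-sided bound on the quotient to Lemma~\ref{Algs} (Algebraic inequality~I). Observe that the numerator $(|\xi|^{\alpha-2}\xi-|\eta|^{\alpha-2}\eta)(\xi-\eta)$ is exactly the product of $|\xi|^{\beta-1}\xi-|\eta|^{\beta-1}\eta$ with $\beta=\alpha-1>0$ and the scalar $\xi-\eta$. Since $\xi,\eta\in\bR$ and $\xi\neq\eta$, both $\xi-\eta$ and the difference $|\xi|^{\alpha-2}\xi-|\eta|^{\alpha-2}\eta$ have the same sign (the map $r\mapsto|r|^{\alpha-2}r$ is strictly increasing on $\bR$ for $\alpha>1$), so the numerator is strictly positive. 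Consequently
\[
(|\xi|^{\alpha-2}\xi-|\eta|^{\alpha-2}\eta)(\xi-\eta)=\Big||\xi|^{\alpha-2}\xi-|\eta|^{\alpha-2}\eta\Big|\,|\xi-\eta|.
\]
Applying Lemma~\ref{Algs} with $\beta=\alpha-1$ to the first factor gives, for constants $c\equiv c(\alpha)$,
\[
c^{-1}(|\xi|+|\eta|)^{\alpha-2}|\xi-\eta|\le\Big||\xi|^{\alpha-2}\xi-|\eta|^{\alpha-2}\eta\Big|\le c\,(|\xi|+|\eta|)^{\alpha-2}|\xi-\eta|.
\]
Multiplying through by $|\xi-\eta|=\sqrt{(\xi-\eta)^2}$ and dividing by $(|\xi|+|\eta|)^{\alpha-2}(\xi-\eta)^2$ (which is positive and finite whenever $\xi\neq\eta$, noting $|\xi|+|\eta|>0$ in that case) yields the claimed two-sided inequality.

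For the displayed consequence~\eqref{algs'}, I would simply note that the case $\xi=\eta$ gives equality with value $0$, while for $\xi\neq\eta$ the monotonicity argument above already shows the product is strictly positive; hence $(|\xi|^{\alpha-2}\xi-|\eta|^{\alpha-2}\eta)(\xi-\eta)\ge0$ for all $\xi,\eta\in\bR$. Alternatively, one can invoke convexity of $r\mapsto\frac{1}{\alpha}|r|^{\alpha}$ and the monotonicity of its derivative $r\mapsto|r|^{\alpha-2}r$.

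The only genuine subtlety — hardly an obstacle — is bookkeeping the degenerate configurations: when one of $\xi,\eta$ vanishes the factor $(|\xi|+|\eta|)^{\alpha-2}$ may blow up (if $1<\alpha<2$) or vanish (if $\alpha>2$), but as long as $\xi\neq\eta$ we have $|\xi|+|\eta|>0$ and $(\xi-\eta)^2>0$, so the denominator in the quotient is a well-defined positive real and the division is legitimate. The restriction $\xi\neq\eta$ in the statement is precisely what makes the middle expression meaningful, so no separate case analysis is needed beyond recording the sign of the numerator.
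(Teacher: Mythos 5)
Your proposal is correct and follows exactly the route the paper intends: the paper gives no written proof but introduces Lemma~\ref{Algs'} with the remark that ``the above algebraic inequality [Lemma~\ref{Algs}] allows us to derive the following useful inequality,'' and you make that derivation explicit by choosing $\beta=\alpha-1$, using strict monotonicity of $r\mapsto|r|^{\alpha-2}r$ to drop the absolute value from the numerator, and multiplying through by $|\xi-\eta|$. The bookkeeping of the degenerate cases ($\xi=\eta$ for the sign statement, and $|\xi|+|\eta|>0$ for the quotient) is handled correctly.
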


The next inequality is another necessary tool to control the fractional integral term, whose proof is in~\cite[Lemma 2.9]{BGK}.

\begin{lem}\label{fractional est.}
Let $a,b >0$, $\tau_1,\tau_2 \geq 0$. Then for all $p>1$ there exists a constant $c\equiv c(p)$ such that
\begin{align*}
|b-a|^{p-2}(b-a)\left(\tau_1^pa^{-\varepsilon}-\tau_2^pb^{-\varepsilon}\right)&\geq c\zeta(\xi)\Big|\tau_2b^{\frac{\alpha}{p}}-\tau_1a^{\frac{\alpha}{p}}\Big|^p \\[3mm]
&\quad \quad \quad \quad-\left(\zeta(\varepsilon)+1+\varepsilon^{-(p-1)}\right)|\tau_2-\tau_1|^p\left(b^\alpha+a^\alpha\right),
\end{align*}
where $\varepsilon \in (0,p-1)$, $\alpha:=p-1-\varepsilon$ and the function $\varepsilon \mapsto \zeta(\varepsilon)$ is explicitly given by
\[
\zeta(\varepsilon):=\begin{cases}
\dfrac{\varepsilon p^p}{\alpha}\quad &\textrm{if}\,\,\,0 <\alpha<1\\[4mm]
\varepsilon \left(\dfrac{p}{\alpha}\right)^p \quad& \textrm{otherwise}.
\end{cases}
\]
\end{lem}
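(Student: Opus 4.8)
The plan is to prove this as a pointwise algebraic inequality for the real numbers $a,b,\tau_1,\tau_2$, reproducing the argument of \cite[Lemma~2.9]{BGK}; essentially all the content is a careful bookkeeping of the $\varepsilon$- and $\alpha$-dependent constants, so that they collapse into $c(p)\zeta(\varepsilon)$ on the good side and into $\zeta(\varepsilon)+1+\varepsilon^{-(p-1)}$ on the error side. First I would reduce to $0<a\le b$: both sides are invariant under the simultaneous exchange $(a,\tau_1)\leftrightarrow(b,\tau_2)$, since on the left $|b-a|^{p-2}(b-a)$ and $\tau_1^pa^{-\varepsilon}-\tau_2^pb^{-\varepsilon}$ both change sign, while on the right $|\tau_2b^{\alpha/p}-\tau_1a^{\alpha/p}|^p$ and $|\tau_2-\tau_1|^p(b^\alpha+a^\alpha)$ are symmetric; after this reduction $|b-a|^{p-2}(b-a)=(b-a)^{p-1}\ge0$. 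I would also record that the inequality is positively homogeneous of degree $\alpha$ in $(a,b)$ and of degree $p$ in $(\tau_1,\tau_2)$, so that after scaling $b=1$, $a=t\in(0,1]$ one is left with essentially one free parameter (together with the sign of $\tau_1-\tau_2$), which is what keeps every constant explicit.

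Next I would split $\tau_1^pa^{-\varepsilon}-\tau_2^pb^{-\varepsilon}=\tau_2^p(a^{-\varepsilon}-b^{-\varepsilon})+(\tau_1^p-\tau_2^p)a^{-\varepsilon}$, multiply through by $(b-a)^{p-1}$, and treat the two pieces separately. The first, nonnegative, piece $(b-a)^{p-1}\tau_2^p(a^{-\varepsilon}-b^{-\varepsilon})$ rests on the one-variable calculus inequality
\[
(b-a)^{p-1}\bigl(a^{-\varepsilon}-b^{-\varepsilon}\bigr)\ \ge\ c(p)\,\zeta(\varepsilon)\,\bigl(b^{\alpha/p}-a^{\alpha/p}\bigr)^{p}\qquad(0<a\le b),
\]
i.e., after normalizing, $(1-t)^{p-1}(t^{-\varepsilon}-1)\ge c(p)\zeta(\varepsilon)(1-t^{\alpha/p})^{p}$ for $t\in(0,1]$. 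I would prove this by writing $t^{-\varepsilon}-1=\varepsilon\int_t^1 s^{-1-\varepsilon}\,\mathrm{d}s$ and $1-t^{\alpha/p}=\tfrac{\alpha}{p}\int_t^1 s^{\alpha/p-1}\,\mathrm{d}s$, comparing the two integrals, and distinguishing $t$ near $1$ (Taylor expansion, which forces the factor $\varepsilon(p/\alpha)^p$) from $t$ bounded away from $1$ (where the left side is bounded below by a fixed positive multiple of $\varepsilon$ and $1-t^{\alpha/p}\le1$, which is where the alternative value $\varepsilon p^p/\alpha$ is used). It is precisely in order to cover both regimes uniformly for $\varepsilon\in(0,p-1)$ that $\zeta$ must be defined with the case split on whether $\alpha<1$ or $\alpha\ge1$.

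For the second piece $(b-a)^{p-1}(\tau_1^p-\tau_2^p)a^{-\varepsilon}$, which can be negative, I would estimate $|\tau_1^p-\tau_2^p|\le c(p)(\tau_1+\tau_2)^{p-1}|\tau_1-\tau_2|$ (Lemma~\ref{Algs} with $\beta=p$) together with $(b-a)^{p-1}a^{-\varepsilon}\le b^{p-1}a^{-\varepsilon}$, and then apply Young's inequality in the $\varepsilon$-weighted form of Lemma~\ref{Gamma} — which is exactly where the exponent $\varepsilon^{1-p}=\varepsilon^{-(p-1)}$ enters — to absorb a small multiple of the good quantity $\zeta(\varepsilon)|\tau_2b^{\alpha/p}-\tau_1a^{\alpha/p}|^p$ and put the remainder into $\bigl(\zeta(\varepsilon)+1+\varepsilon^{-(p-1)}\bigr)|\tau_1-\tau_2|^p(a^\alpha+b^\alpha)$. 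Finally, to pass from $(b^{\alpha/p}-a^{\alpha/p})^p$ and $\tau_2^p(b^{\alpha/p}-a^{\alpha/p})^p$ produced by the diagonal step to $|\tau_2b^{\alpha/p}-\tau_1a^{\alpha/p}|^p$, I would invoke Lemma~\ref{Gamma} once more, applied to the two reals $\tau_2b^{\alpha/p}$ and $\tau_1a^{\alpha/p}$, again shifting the error into the $|\tau_1-\tau_2|^p(a^\alpha+b^\alpha)$ term. Summing the finitely many $p$-dependent constants generated by the calculus inequality and the two or three applications of Lemma~\ref{Gamma}/Young yields the claim; the boundary configurations $\tau_1=0$, $\tau_2=0$, or $a\ll b$ (so $a^{-\varepsilon}$ is large) are dealt with directly, the left-hand side being then large enough on its own.

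The main obstacle is not any single estimate — symmetry, homogeneity, the integral comparison for powers, and Young's inequality are all elementary — but the constant accounting: one must verify that every $\alpha$- and $\varepsilon$-dependent factor occurring along the way is controlled by $c(p)\zeta(\varepsilon)$ on the good side (this is what dictates the case split in the definition of $\zeta$) and by $\zeta(\varepsilon)+1+\varepsilon^{-(p-1)}$ on the error side.
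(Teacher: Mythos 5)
Your proposal is evaluated against a proof the paper does not actually contain: the paper simply cites \cite[Lemma 2.9]{BGK} for this statement, so the attempt must stand on its own. Much of your skeleton is sound: the reduction to $0<a\le b$ by the exchange $(a,\tau_1)\leftrightarrow(b,\tau_2)$, the homogeneity normalization, the splitting $\tau_1^pa^{-\varepsilon}-\tau_2^pb^{-\varepsilon}=\tau_2^p(a^{-\varepsilon}-b^{-\varepsilon})+(\tau_1^p-\tau_2^p)a^{-\varepsilon}$, the one-variable inequality $(b-a)^{p-1}(a^{-\varepsilon}-b^{-\varepsilon})\ge c(p)\,\zeta(\varepsilon)\,(b^{\alpha/p}-a^{\alpha/p})^p$ (which indeed follows from $t^{-\varepsilon}-1=\varepsilon\int_t^1 s^{-1-\varepsilon}\,{\rm d}s\ge \tfrac{\varepsilon p}{\alpha}(1-t^{\alpha/p})$ together with $1-t^{\alpha/p}\le 1-t$, since $\alpha/p<1$; one even gets the single constant $\varepsilon p/\alpha\ge p^{1-p}\zeta(\varepsilon)$ in both branches), and the use of Lemma~\ref{Gamma} to convert $\tau_2^p(b^{\alpha/p}-a^{\alpha/p})^p$ into $|\tau_2b^{\alpha/p}-\tau_1a^{\alpha/p}|^p$ at the price of $|\tau_2-\tau_1|^pa^{\alpha}$.

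The genuine gap is your treatment of the cross term $(b-a)^{p-1}(\tau_1^p-\tau_2^p)a^{-\varepsilon}$ when $\tau_1<\tau_2$. After your bounds $|\tau_1^p-\tau_2^p|\le c(p)(\tau_1+\tau_2)^{p-1}|\tau_1-\tau_2|$ and $(b-a)^{p-1}a^{-\varepsilon}\le b^{p-1}a^{-\varepsilon}$, the quantity to be absorbed carries the weight $b^{p-1}a^{-\varepsilon}$, which is \emph{not} controlled by $a^{\alpha}+b^{\alpha}$, nor by the good term: take $b=1$, $\tau_1=\tfrac12$, $\tau_2=1$, $a\to0$; then the term you propose to absorb blows up like $a^{-\varepsilon}$, while $\zeta(\varepsilon)|\tau_2b^{\alpha/p}-\tau_1a^{\alpha/p}|^p$ and $(\zeta(\varepsilon)+1+\varepsilon^{-(p-1)})|\tau_2-\tau_1|^p(a^{\alpha}+b^{\alpha})$ both stay bounded. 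So no Young-type absorption into ``a small multiple of the good quantity plus the stated error term'' can work there; the inequality survives in that regime only because the positive diagonal piece $\tau_2^p(b-a)^{p-1}(a^{-\varepsilon}-b^{-\varepsilon})$ (equivalently the term $\tau_1^pa^{-\varepsilon}$ in the left-hand side itself) blows up at the same rate, so the absorption must go into that piece --- and when $a$ is comparable to $b$ that piece degenerates, so there one must instead compare directly with the error term. Your fallback that for $a\ll b$ ``the left-hand side is large enough on its own'' is not true uniformly: for $\tau_1=0$ it is negative, and for $\tau_1\sim a^{\varepsilon/p}$ it is merely bounded, so this remark cannot replace the missing argument. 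What is actually needed --- and what the cited proof in \cite{BGK} supplies --- is a quantitative case analysis (e.g.\ according to whether $\tau_1a^{\alpha/p}$ is comparable to $\tau_2b^{\alpha/p}$ and whether $a\le b/2$), splitting the cross term between the diagonal piece and the error term with $\varepsilon$-dependent weights; as written, your plan omits this step and therefore does not close.
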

\medskip

We next deduce the Gagliardo-Nirenberg inequality of parabolic type, which is retrieved from~\cite[Chapter I.4]{DiBenedetto1}. The proof is also seen in~\cite[Lemma 2.3]{Nakamura}.

\begin{lem}[Gagliardo-Nirenberg inequality]\label{GN}
Let  $1\leq p,r <\infty$ and $0\leq t_1<t_2 \leq T$. Assume that
\[
v \in L^\infty(t_1,\,t_2\,; L^r(B_\rho(x_0))) \cap L^p(t_1,\,t_2\,;W^{1,p}(B_\rho(x_0))). 
\]
Then there exists a constant $c\equiv c(n,p,r)$ such that
\begin{equation}\label{GNeq.}
\fiint_{Q_{\rho; t_1,t_2}}|v|^{p\frac{n+r}{n}}\dxt 
\leq c\rho^{p}\left(\sup_{t\in (t_1,t_2)}\dashint_{B_\rho(x_0)}|v(t)|^r\dxt\right)^{\frac{p}{n}}\fiint_{Q_{\rho; t_1,t_2}}\left(\,|Dv|^p+\left|\frac{v}{\rho}\right|^p \,\right)\dxt,
\end{equation}
where $Q_{\rho\,; \,t_1,t_2}:=B_\rho(x_0) \times (t_1,t_2)$.
\end{lem}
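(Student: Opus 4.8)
The plan is to reduce the parabolic Gagliardo--Nirenberg inequality \eqref{GNeq.} to the classical (elliptic) Gagliardo--Nirenberg--Sobolev inequality applied slice-by-slice in time, and then interpolate the $L^\infty$-in-time control of the $L^r$-norm against the $L^p$-in-time control of the full $W^{1,p}$-norm. First I would fix the spatial slice $v(t)=v(\cdot,t)\in W^{1,p}(B_\rho(x_0))$ for a.e.\ $t\in(t_1,t_2)$, and recall the scaled Gagliardo--Nirenberg inequality on the ball: for a suitable exponent $\theta\in[0,1]$ determined by dimensional analysis, one has
\[
\left(\dashint_{B_\rho}|v(t)|^{p\frac{n+r}{n}}\dx\right)^{\frac{n}{n+r}}
\le c\,\rho^{p}\left(\dashint_{B_\rho}\bigl(|Dv(t)|^p+|v(t)/\rho|^p\bigr)\dx\right)^{\theta}\left(\dashint_{B_\rho}|v(t)|^r\dx\right)^{(1-\theta)\frac{p\,?}{\cdots}},
\]
where the bookkeeping of exponents is forced by requiring the two sides to scale identically under $v\mapsto \lambda v$ and under dilations $x\mapsto x/\rho$; the correct choice turns out to make the $L^r$-factor appear to the power $\tfrac{p}{n}\cdot\tfrac{n}{n+r}$ after one multiplies through, which is exactly what is needed for the final form. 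I would be careful to include the $|v/\rho|^p$ term so that the inequality holds on a ball (not just on $\mathbb{R}^n$), using an extension/cutoff or the Sobolev inequality on balls directly, and to track that the constant depends only on $n,p,r$.

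Next I would raise the slice inequality to the power $\tfrac{n+r}{n}$ and integrate in $t$ over $(t_1,t_2)$. On the right-hand side, I would pull the $L^r$-factor out of the time integral by bounding it by its supremum,
\[
\sup_{t\in(t_1,t_2)}\dashint_{B_\rho}|v(t)|^r\dx,
\]
raised to the appropriate power, which is legitimate precisely because the exponent on that factor, after the manipulations above, equals $\tfrac{p}{n}$ — matching the statement. What remains inside the time integral is then $\rho^p$ times the time-average of $\dashint_{B_\rho}(|Dv|^p+|v/\rho|^p)\dx$, i.e.\ exactly $\rho^p\,\fiint_{Q_{\rho;t_1,t_2}}(|Dv|^p+|v/\rho|^p)\dxt$. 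A small point to handle is converting between $\int_{t_1}^{t_2}$ and $\fiint$: since the time-average inequality should be homogeneous of degree zero in $|t_2-t_1|$ (both sides scale the same way), the factor $\tfrac{1}{t_2-t_1}$ comes out cleanly on both sides and I would simply verify this rather than carry it through every line.

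The main obstacle, and the step deserving the most care, is getting the exponent bookkeeping exactly right so that the $L^r$-supremum factor appears with power $\tfrac{p}{n}$ and not some other power, and so that the power of $\rho$ comes out to be precisely $p$. The cleanest way to pin this down is a scaling argument: the inequality \eqref{GNeq.} must be invariant under $v\mapsto\lambda v$ (both sides homogeneous of degree $p\tfrac{n+r}{n}$ in $v$) and under the parabolic rescaling $x\mapsto x_0+\rho\hat x$, $t\mapsto$ (linear in $\rho^p$), and these two invariances together with the known Sobolev exponent $\tfrac{np}{n-p}$ (when $p<n$; the borderline and $p\ge n$ cases handled by the usual truncation or by using a slightly subcritical Sobolev exponent, which suffices since $p\tfrac{n+r}{n}$ is below the relevant threshold after interpolation) uniquely determine $\theta$ and hence all exponents. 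Once the slice inequality is correctly calibrated, the time integration and the pull-out of the supremum are routine, so I would allocate essentially all the attention to that calibration and to ensuring the ball version of Gagliardo--Nirenberg (with the $|v/\rho|^p$ correction term) is invoked with a constant depending only on $n,p,r$. For the full details of the slice inequality I would cite \cite[Chapter I.4]{DiBenedetto1} and \cite[Lemma 2.3]{Nakamura}.
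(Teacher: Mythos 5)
The paper does not supply its own proof of this lemma; it merely cites \cite[Chapter I.4]{DiBenedetto1} and \cite[Lemma 2.3]{Nakamura}, and both references prove it exactly as you propose, by applying a slice-wise Gagliardo--Nirenberg estimate on the ball and integrating in time. So your strategy is the intended one. However, your trial slice inequality is miscalibrated in a way that would not survive the scaling check you propose. If you put the exponent $\tfrac{n}{n+r}$ on the left-hand side, i.e.\ you work with $\big(\dashint|v|^q\big)^{n/(n+r)}$, then both $\rho^p$ and the Dirichlet factor on the right-hand side must also carry the exponent $\tfrac{n}{n+r}$; with the $\rho^p$ outside the exponent, raising to $\tfrac{n+r}{n}$ would produce $\rho^{p(n+r)/n}$, not $\rho^p$. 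The cleanest route avoids this entirely: use the un-normalised slice inequality
\begin{equation*}
\int_{B_\rho}|v(t)|^{q}\dx \;\leq\; c\left(\int_{B_\rho}\left(|Dv(t)|^{p}+\left|\tfrac{v(t)}{\rho}\right|^{p}\right)\dx\right)\left(\int_{B_\rho}|v(t)|^{r}\dx\right)^{\frac{p}{n}},
\qquad q=p\,\tfrac{n+r}{n},
\end{equation*}
which is scale-invariant (both sides are homogeneous of degree $n$ under $x\mapsto x_0+\rho\hat x$, and of degree $q$ in $v$) and follows from H\"older with exponents $\tfrac{n}{n-p},\tfrac{n}{p}$ plus the Sobolev embedding on $B_\rho$ when $p<n$, with the usual modifications for $p\geq n$. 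No raising to a power is needed: integrate directly in $t$, bound the last factor by its supremum, and divide by $(t_2-t_1)|B_\rho|$; the factor $\rho^{p}$ then emerges from $|B_\rho|^{p/n}$ when converting $\left(\int_{B_\rho}|v|^r\right)^{p/n}$ into averaged form. With this correction your proof is complete and coincides with the cited references.
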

The following inequality holds on the Sobolev space $W_0^{1,p}(\Omega)$, whose proof can be seen in~\cite[Lemma 2.1]{BSM}.

\begin{lem}\label{Wsp ineq.}
Let $1 <p<\infty$, $0<s<1$ and $\Omega \subset \bR^n$ be a bounded domain with the Lipschitz boundary. Then there exists a constant $c\equiv c(n,s,p,\Omega)$ such that
\[
\iint_{\bR^n \times \bR^n}\dfrac{\left|v(x)-v(y)\right|^p}{|x-y|^{n+sp}}\dxy \leq c \int_\Omega |Dv|^p\dx
\]
holds wherever $v \in W^{1,p}_0(\Omega)$.
\end{lem}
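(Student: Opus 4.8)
The plan is to extend $v$ by zero to all of $\bR^n$; since $v \in W^{1,p}_0(\Omega)$, this zero extension belongs to $W^{1,p}(\bR^n)$, its gradient is supported in $\Omega$, and $\|Dv\|_{L^p(\bR^n)}=\|Dv\|_{L^p(\Omega)}$. Regarding $v$ as a function on $\bR^n$, I would then split the Gagliardo double integral according to whether $|x-y|<1$ or $|x-y|\geq 1$ and estimate the two pieces separately. By density of $C_c^\infty(\Omega)$ in $W^{1,p}_0(\Omega)$ (this is where the Lipschitz regularity of $\partial\Omega$ enters, although one could equally well argue directly via the absolute continuity of Sobolev functions on almost every line) together with Fatou's lemma applied along an a.e.\ convergent subsequence, it suffices to prove the inequality for $v \in C_c^\infty(\Omega)$.

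For the near-diagonal piece I would use the fundamental theorem of calculus, $v(x)-v(y)=\int_0^1 Dv(y+t(x-y))\cdot(x-y)\dt$, and Jensen's inequality in $t$, obtaining $|v(x)-v(y)|^p\leq |x-y|^p\int_0^1|Dv(y+t(x-y))|^p\dt$. Inserting this, writing $|x-y|^{p}|x-y|^{-n-sp}=|x-y|^{-n-(s-1)p}$, splitting the $t$-integral at $t=\tfrac12$, performing the changes of variables $z=y+t(x-y)$ (respectively $z=x+(1-t)(y-x)$), and using Fubini's theorem, one arrives at
\[
\iint_{|x-y|<1}\frac{|v(x)-v(y)|^p}{|x-y|^{n+sp}}\dxy\leq c(n,p,s)\int_{\bR^n}|Dv|^p\dx=c(n,p,s)\int_\Omega|Dv|^p\dx .
\]

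For the far piece the kernel is integrable in one variable: using $|v(x)-v(y)|^p\leq 2^{p-1}(|v(x)|^p+|v(y)|^p)$, Fubini, the fact that $v$ is supported in $\Omega$, and $\int_{\{|z|\geq1\}}|z|^{-n-sp}\dz=\tfrac{n|B_1|}{sp}$, I would obtain
\[
\iint_{|x-y|\geq1}\frac{|v(x)-v(y)|^p}{|x-y|^{n+sp}}\dxy\leq \frac{2^p n|B_1|}{sp}\int_\Omega|v|^p\dx .
\]
Finally, since $\Omega$ is bounded, Poincar\'e's inequality on $W^{1,p}_0(\Omega)$ gives $\int_\Omega|v|^p\dx\leq c(n,p,\Omega)\int_\Omega|Dv|^p\dx$, and adding the two contributions yields the claim with $c=c(n,s,p,\Omega)$. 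I do not anticipate a genuine obstacle: the only slightly delicate bookkeeping is the change of variables and application of Fubini in the near-diagonal estimate, and that is entirely routine; a minor point to be careful about is justifying the passage from the smooth case to general $v\in W^{1,p}_0(\Omega)$, which the density/Fatou step handles.
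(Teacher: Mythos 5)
Your proposal is correct and essentially self-contained; note that the paper itself supplies no argument here, simply referring the reader to \cite[Lemma~2.1]{BSM}, so there is no ``in-paper'' proof to compare against. What you wrote is the standard elementary derivation of the Gagliardo seminorm bound from the $W^{1,p}$ energy: reduce by density and Fatou to $v\in C_c^\infty(\Omega)$, split the double integral at $|x-y|=1$, control the near-diagonal piece by the fundamental theorem of calculus plus Jensen (giving the integrable singularity $|x-y|^{-n+(1-s)p}$), control the far piece by $|v(x)-v(y)|^p\le 2^{p-1}(|v(x)|^p+|v(y)|^p)$ together with $\int_{|z|\ge 1}|z|^{-n-sp}\d z = n|B_1|/(sp)$, and upgrade $\|v\|_{L^p}$ to $\|Dv\|_{L^p}$ by Poincar\'e on $W^{1,p}_0(\Omega)$ --- which is exactly where the boundedness of $\Omega$ enters and why the constant depends on $\Omega$. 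One small simplification worth recording: because $v$ is extended by zero and the diagonal integral runs over all of $\bR^n\times\bR^n$, you can avoid the $t$-splitting at $1/2$ and the attendant Jacobian bookkeeping entirely by substituting $w=x-y$ and then integrating in $y$ alone, using translation invariance to get $\int_{\bR^n}|Dv(y+tw)|^p\dy=\int_\Omega|Dv|^p$ uniformly in $t\in(0,1)$ and $w$; this leaves only the radial integral $\int_{|w|<1}|w|^{-n+(1-s)p}\d w = n|B_1|/((1-s)p)$. The splitting at $t=1/2$ is the right trick when one must stay inside a bounded domain, but is unnecessary here. Finally, a minor bibliographic caveat: the citation in the paper reads \cite{BSM} (entry key \texttt{BSM}) whose bibliography entry is Buccheri--da Silva--de Miranda, while there is a separate entry \texttt{BMS} (Brasco--Mosconi--Squassina); make sure you are pointing to the intended reference if you reuse it.
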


%
%


%
%
%
%
%
%

We finally present the so-called ``simple but fundamental lemma'' due to Giaquinta and Giusti~\cite[Lemma 1.1]{Giaquinta-Giusti} or~\cite[Lemma 6.1, Page 191]{Giusti}. 
\begin{lem}\label{iteration lemma}
Let  $A,\,B \geq 0$, $\beta>0$ and $0<\sigma<\tau$. Suppose that any nonnegative bounded function $\mathcal{Z}=\mathcal{Z}(s)$ defined on $[\sigma, \tau]$ satisfies
\[
\mathcal{Z}(s) \leq \frac{3}{4} \mathcal{Z}(t)+A(t-s)^{-\beta}+B
\]
for every $\sigma \leq  s<t \leq \tau$. Then there exists a constant $c$, depending only on $\beta$, such that
\[
\mathcal{Z}(\sigma) \leq c\left(A(\tau-\sigma)^{-\beta}+B\right).
\]

\end{lem}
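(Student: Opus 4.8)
\textbf{Proof proposal for Lemma~\ref{iteration lemma}.}

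The plan is to exploit the coefficient $\tfrac34<1$ by iterating the hypothesis along a geometric sequence of radii that telescopes. First I would fix $\sigma\leq s<t\leq\tau$ and, instead of working with them directly, introduce an increasing sequence $(\rho_i)_{i\geq0}$ with $\rho_0=\sigma$ and $\rho_i\nearrow\tau$, chosen so that the successive gaps $\rho_{i+1}-\rho_i$ shrink geometrically; a convenient choice is $\rho_i=\tau-(\tau-\sigma)\lambda^i$ for a fixed $\lambda\in(0,1)$ to be picked later, so that $\rho_{i+1}-\rho_i=(\tau-\sigma)\lambda^i(1-\lambda)$. Applying the hypothesis with $s=\rho_i$ and $t=\rho_{i+1}$ gives
\[
\mathcal{Z}(\rho_i)\leq \tfrac34\,\mathcal{Z}(\rho_{i+1})+A\big[(\tau-\sigma)(1-\lambda)\big]^{-\beta}\lambda^{-\beta i}+B.
\]

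Next I would iterate this relation $k$ times starting from $i=0$, which yields
\[
\mathcal{Z}(\sigma)\leq \Big(\tfrac34\Big)^{k}\mathcal{Z}(\rho_k)+\frac{A}{\big[(\tau-\sigma)(1-\lambda)\big]^{\beta}}\sum_{i=0}^{k-1}\Big(\tfrac34\Big)^{i}\lambda^{-\beta i}+B\sum_{i=0}^{k-1}\Big(\tfrac34\Big)^{i}.
\]
The key is to choose $\lambda$ close enough to $1$ that $\tfrac34\lambda^{-\beta}<1$; explicitly any $\lambda$ with $\lambda^{\beta}>\tfrac34$, e.g. $\lambda=(\tfrac78)^{1/\beta}$, works and depends only on $\beta$. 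With such a $\lambda$ both geometric series converge as $k\to\infty$, and since $\mathcal{Z}$ is bounded on $[\sigma,\tau]$ while $(\tfrac34)^k\to0$, the first term vanishes in the limit. We are left with
\[
\mathcal{Z}(\sigma)\leq \frac{A}{\big[(\tau-\sigma)(1-\lambda)\big]^{\beta}}\cdot\frac{1}{1-\tfrac34\lambda^{-\beta}}+\frac{B}{1-\tfrac34}=c(\beta)\Big(A(\tau-\sigma)^{-\beta}+B\Big),
\]
where $c(\beta)$ collects the constants $(1-\lambda)^{-\beta}$, $(1-\tfrac34\lambda^{-\beta})^{-1}$ and $4$, all depending only on $\beta$.

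There is essentially no analytic obstacle here; the only thing to be careful about is the \emph{order of operations}: one must fix the geometric ratio $\lambda$ depending on $\beta$ \emph{before} iterating, so that the series $\sum(\tfrac34\lambda^{-\beta})^{i}$ has a ratio strictly below $1$, and one must use the a priori boundedness of $\mathcal{Z}$ to kill the tail term $(\tfrac34)^k\mathcal{Z}(\rho_k)$ — this is the single place the boundedness hypothesis is actually needed. A cosmetic alternative that avoids passing to the limit is to stop the iteration at a finite $k$ and absorb $(\tfrac34)^k\mathcal{Z}(\tau)$ using that $\mathcal{Z}(\tau)\leq\mathcal{Z}(\sigma)+\cdots$ is not available directly, so the clean route really is the limiting argument above. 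I expect the write-up to be short, with the only genuine care point being the bookkeeping of the two geometric sums and the verification that the resulting constant depends on $\beta$ alone.
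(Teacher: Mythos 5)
Your proof is correct and is essentially the standard argument from the sources the paper cites for this lemma (Giaquinta--Giusti and Giusti's book): iterate along the geometric sequence $\rho_i=\tau-(\tau-\sigma)\lambda^i$ with $\lambda\in(0,1)$ chosen so that $\tfrac34\lambda^{-\beta}<1$, sum the two geometric series, and use the a priori boundedness of $\mathcal{Z}$ to discard the term $(\tfrac34)^k\mathcal{Z}(\rho_k)$ as $k\to\infty$. The paper itself gives no proof, only the citation, so there is no divergence to report.
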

\subsection{Mollification in time}
In this subsection we introduce the \emph{exponential mollification in time}, originally devised in~\cite{KL}. This mollification is a breakthrough tool that can overcome the lack of weak differentiable in time for solutions. For this reason, this technique is  applied to various doubly nonlinear equations, as seen in the literatures~\cite{BDV,BDKS1,BDKS2, BDL, Nakamura}. With $E \subset \bR^k$ being a bounded domain,  let us define, for $v \in L^1(E_T)$ and $h \in (0,T)$, 
\begin{equation}\label{def. mollification}
[v]_h(x,t):=\dfrac{1}{h}\int_0^te^{\frac{s-t}{h}}v(x,s)\ds,\quad (x,t) \in E \times [0,T].
\end{equation}
The backward version of $[v]_h$ is given by
\[
[v]_{\bar{h}}(x,t):=\dfrac{1}{h}\int_t^Te^{\frac{t-s}{h}}v(x,s)\ds,\quad (x,t) \in E \times [0,T].
\]
In this setting, we summarize the properties of $[v]_h$ and $[v]_{\bar{h}}$ displayed below, whose detailed proof can be seen in the literatures~\cite[Lemma 2.2]{KL} and~\cite[Appendix B]{BDM}.
\begin{lem}\label{mollification lemma}
Assume that $v\in L^1(E_T)$ and $p \in [1,\infty)$, where $E_T:=E \times (0,T)$. Then the mollifications $[v]_{h}$ and $[v]_{\bar{h}}$ have the following properties:
\begin{enumerate}[\normalfont(i)]
\item If $v \in L^p(E_T)$, then $[v]_h \in L^p(E_T)$ and the inequality holds true:
\[
\|[v]_h\|_{L^p(E_T)} \leq \|v\|_{L^p(E_T)}.
\]
Furthermore, 
\[
[v]_h \to v \quad \textrm{strongly in}\,\,\, L^p(E_T)\quad \textrm{as} \,\,\,h \searrow 0. 
\]
A same statement for $[v]_{\bar{h}}$ holds true.
\item If $v \in L^p(E_T)$, then $[v]_h$ and $[v]_{\bar{h}}$ have weak time derivatives being in $L^p(E_T)$ and solve the ODE:
\begin{align*}
\partial_t[v]_h=-\frac{[v]_h-v}{h} \quad;\quad \partial_t[v]_{\bar{h}}=\frac{[v]_{\bar{h}}-v}{h}.
\end{align*}
\item If $v \in L^p(0,T\,;W^{1,p}_0(E))$, then $[v]_h \in L^p(0,T\,;W^{1,p}_0(E))$. Furthermore,  there holds that
$[v]_h \to v$ strongly in $L^p(0,T\,;W^{1,p}_0(E))$ as $h \searrow 0$. The same implication for $[v]_{\bar{h}}$ holds true.
\item If $v \in L^p(0,T\,; L^p(E))$, then $[v]_h$ and $[v]_{\bar{h}}$ belong to $C([0,T]\,; L^p(E))$.
\item If $v \in L^\infty(0,T\,; L^2(E))$, then $[v]_h \in C([0,T]\,; L^2(E))$. Moreover we have
\[
[v]_h \to v \quad \textrm{in} \quad C([0,T]\,;L^2(E)) \quad \textrm{as}\,\,\, h\searrow 0.
\]
\end{enumerate}
\end{lem}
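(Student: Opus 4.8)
The plan is to treat $[\,\cdot\,]_h$ as a bounded linear averaging operator: establish the quantitative $L^p$-bound first, deduce the convergence statements from it by a density argument, and then read off the remaining assertions from the defining ODE and a one-dimensional Sobolev embedding in time. Throughout, the backward mollification need not be treated separately, since the substitution $t\mapsto T-t$ turns $[v]_h$ into $[\widetilde v]_{\bar h}$ for the time-reflected function $\widetilde v(x,t):=v(x,T-t)$; hence every statement for $[\,\cdot\,]_h$ transfers to $[\,\cdot\,]_{\bar h}$, with the sign in (ii) flipped because $\partial_t(T-t)=-1$.

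\emph{Parts (i) and (iii).} The crucial observation is that for fixed $(x,t)$ the kernel $s\mapsto\tfrac1h e^{(s-t)/h}\chi_{(0,t)}(s)$ has total mass $1-e^{-t/h}\le 1$. Jensen's inequality then gives $|[v]_h(x,t)|^p\le\tfrac1h\int_0^t e^{(s-t)/h}|v(x,s)|^p\ds$, and Fubini's theorem combined with $\tfrac1h\int_s^T e^{(s-t)/h}\dt=1-e^{(s-T)/h}\le 1$ yields $\|[v]_h\|_{L^p(E_T)}\le\|v\|_{L^p(E_T)}$. The identical computation is valid for Bochner integrals, with $|\cdot|$ replaced by the norm of a Banach space $X$; taking $X=W^{1,p}_0(E)$, and noting that $[\,\cdot\,]_h$ commutes with the $x$-gradient because the kernel does not depend on $x$, gives the bound in (iii). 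For the convergence, the operators $v\mapsto[v]_h$ are uniformly bounded by the estimate just proved, so it suffices to verify $[v]_h\to v$ on a dense class, e.g. finite sums $v(x,t)=\sum_j\phi_j(x)\psi_j(t)$ with $\psi_j\in C^1([0,T])$: for these one computes $[v]_h-v=\sum_j\phi_j\big([\psi_j]_h-\psi_j\big)$ with $|[\psi_j]_h(t)-\psi_j(t)|\le\|\psi_j'\|_\infty h+e^{-t/h}\|\psi_j\|_\infty$, so dominated convergence in $t$ finishes the argument. The same density reasoning in $L^p(0,T;W^{1,p}_0(E))$ gives the convergence in (iii).

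\emph{Parts (ii), (iv) and (v).} Writing $[v]_h(x,t)=\tfrac1h e^{-t/h}\int_0^t e^{s/h}v(x,s)\ds$, the inner integral is absolutely continuous in $t$ for a.e.\ $x$, so differentiating the product gives $\partial_t[v]_h=-\tfrac1h\big([v]_h-v\big)$; rigorously one tests against $C_c^\infty(E_T)$ and moves the $t$-derivative onto the explicit kernel by Fubini. That $[v]_h$ and $\partial_t[v]_h$ lie in $L^p(E_T)$ is then immediate from part (i), proving (ii). Consequently, if $v\in L^p(0,T;L^p(E))$ then $[v]_h\in W^{1,p}(0,T;L^p(E))$, and the embedding $W^{1,p}(0,T;L^p(E))\hookrightarrow C([0,T];L^p(E))$ gives (iv). For (v), applying (iv) with $p=2$ to $v\in L^\infty(0,T;L^2(E))\subset L^2(0,T;L^2(E))$ yields $[v]_h\in C([0,T];L^2(E))$, and the uniform-in-time convergence $[v]_h\to v$ follows by combining the uniform bound $\sup_{[0,T]}\|[v]_h(t)\|_{L^2(E)}\le C$ with the density argument of the previous step carried out in $C([0,T];L^2(E))$.

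\emph{Main obstacle.} The only genuinely delicate points are the convergence assertions, and among these the uniform-in-time convergence in (v) is the most sensitive: it relies on continuity of $v$ in $L^2(E)$ (a jump in time would be faithfully reproduced by $[v]_h$ and defeat uniform convergence), so one must be careful about the precise hypothesis there; likewise the $L^p(E_T)$- and $L^p(0,T;W^{1,p}_0(E))$-convergences depend on pairing the uniform operator bound with a well-chosen dense class on which explicit computation is feasible. Everything else reduces to bookkeeping with Jensen's inequality, Fubini's theorem, and differentiation under the integral sign, and is carried out in detail in \cite{KL, BDM}.
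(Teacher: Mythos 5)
The paper itself contains no proof of this lemma: it is imported verbatim from \cite[Lemma 2.2]{KL} and \cite[Appendix B]{BDM}, so the only meaningful comparison is with the standard arguments there, which your sketch reproduces for (i)--(iv). Those parts are correct: the kernel mass $1-e^{-t/h}\le 1$ plus Jensen (using $|0|^p=0$) and Fubini with $\tfrac1h\int_s^T e^{(s-t)/h}\dt\le 1$ gives the contraction bound; uniform boundedness plus convergence on the dense class of tensor products gives the strong convergence in $L^p(E_T)$; the same Bochner-valued computation, together with the fact that $[\,\cdot\,]_h$ commutes with $D$ and preserves the closed subspace $W^{1,p}_0(E)$, gives (iii); differentiating $e^{-t/h}\int_0^t e^{s/h}v\ds$ gives the ODE in (ii); and $[v]_h\in W^{1,p}(0,T;L^p(E))\hookrightarrow C([0,T];L^p(E))$ gives (iv). The reflection $t\mapsto T-t$ indeed reduces $[\,\cdot\,]_{\bar h}$ to $[\,\cdot\,]_h$ with the sign flip in (ii).

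Part (v) is where your argument has a genuine gap, and the caveat you attach does not repair it. With the definition \eqref{def. mollification} one has $[v]_h(\cdot,0)=0$ for every $h$, so $\sup_{t\in[0,T]}\|[v]_h(t)-v(t)\|_{L^2(E)}$ cannot tend to zero unless $v(\cdot,0)=0$: even for smooth $v$, at $t=h$ one has $[v]_h(\cdot,h)\to(1-e^{-1})\,v(\cdot,0)$ while $v(\cdot,h)\to v(\cdot,0)$, so the uniform error stays bounded below by roughly $e^{-1}\|v(\cdot,0)\|_{L^2(E)}$. Your own dense-class estimate exhibits the obstruction: the term $e^{-t/h}\|\psi_j\|_\infty$ is not uniformly small on $[0,T]$, so the convergence fails already on the dense class, and the ``uniform operator bound plus density in $C([0,T];L^2(E))$'' scheme has nothing to converge on. You are right that $v\in L^\infty(0,T;L^2(E))$ cannot suffice (a uniform limit of continuous maps is continuous), but continuity of $v$ alone is still not enough with this kernel; in \cite{BDM} the uniform-in-time convergence is proved for the variant of the mollification that carries the additional initial term $e^{-t/h}v_o$ with $v_o=v(\cdot,0)$ and $v\in C([0,T];L^2(E))$. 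With the paper's definition one must either add that term, assume $v(\cdot,0)=0$, or settle for uniform convergence on $[\varepsilon,T]$ for each $\varepsilon>0$. (This imprecision is also present in the paper's statement; in its later use, Lemma~\ref{mollification lemma2} is ultimately applied to test functions $\phi\in\mathcal{T}$ with $\phi(\cdot,0)=0$, for which the issue disappears.) As written, however, your proof of (v) does not go through.
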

Hereafter, we shall apply Lemma~\ref{mollification lemma} with $E=\Omega$ or $E=\Omega \times \Omega$ on many times.
\medskip

Using Lemma~\ref{mollification lemma}, we deduce the following lemma.
\begin{lem}\label{mollification lemma2}
With $p \geq 1$, suppose that $v \in L^\infty(0,T\,;L^p(\Omega))$ and abbreviate $w:=|v|^{\frac{p-2}{2}}v$. Set
\[
\langle v\rangle_h:=
\begin{cases}
\left|[w]_h\right|^{\frac{2}{p}-1}[w]_h\quad &\textrm{if}\,\,\,v \neq 0,\\
0 \quad &\textrm{if}\,\,\,v \neq 0.
\end{cases}
\]
Then, $\langle v\rangle_h \in C([0,T]\,;L^p(\Omega))$ and moreover $\langle v \rangle_h \to v$ in $C([0,T]\,;L^p(\Omega))$ as $h \searrow 0$.
\end{lem}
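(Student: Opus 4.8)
The plan is to transfer the convergence and continuity properties of the exponential mollification $[\cdot]_h$ from the $L^2$-setting (Lemma~\ref{mollification lemma}\,(v)) to the $L^p$-setting for the nonlinearly rescaled function $\langle v\rangle_h$, by exploiting the fact that $w:=|v|^{\frac{p-2}{2}}v$ lies in $L^\infty(0,T;L^2(\Omega))$ precisely because $v\in L^\infty(0,T;L^p(\Omega))$. Indeed, $\int_\Omega |w(t)|^2\,\mathrm{d}x=\int_\Omega |v(t)|^p\,\mathrm{d}x$, so $w\in L^\infty(0,T;L^2(\Omega))$ with $\|w\|_{L^\infty(0,T;L^2(\Omega))}=\|v\|_{L^\infty(0,T;L^p(\Omega))}^{p/2}$. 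By Lemma~\ref{mollification lemma}\,(v), $[w]_h\in C([0,T];L^2(\Omega))$ and $[w]_h\to w$ in $C([0,T];L^2(\Omega))$ as $h\searrow 0$.

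First I would record the elementary algebraic fact that the map $\Phi:\bR\to\bR$, $\Phi(r):=|r|^{\frac{2}{p}-1}r$ (with $\Phi(0)=0$), is a homeomorphism of $\bR$ and is, on bounded sets, Hölder or Lipschitz continuous with a quantitative modulus: when $p\le 2$ one has $\frac{2}{p}\ge 1$ and $|\Phi(r_1)-\Phi(r_2)|\le c(p)(|r_1|+|r_2|)^{\frac{2}{p}-1}|r_1-r_2|$, while when $p>2$ one has $0<\frac{2}{p}<1$ and $|\Phi(r_1)-\Phi(r_2)|\le c(p)|r_1-r_2|^{\frac{2}{p}}$; both follow from Lemma~\ref{Algs} applied with $\beta=\frac{2}{p}$. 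Note $\langle v\rangle_h=\Phi\circ[w]_h$ and $v=\Phi\circ w$ a.e. Continuity of $t\mapsto\langle v\rangle_h(\cdot,t)$ in $L^p(\Omega)$ then follows: given $t_k\to t$, we have $[w]_h(\cdot,t_k)\to[w]_h(\cdot,t)$ in $L^2(\Omega)$, hence (up to subsequences) a.e., and applying $\Phi$ together with the algebraic bound and the generalized dominated convergence theorem (using the $L^2$ convergence to dominate) gives $\langle v\rangle_h(\cdot,t_k)\to\langle v\rangle_h(\cdot,t)$ in $L^p(\Omega)$; since the limit is independent of the subsequence, the full sequence converges, so $\langle v\rangle_h\in C([0,T];L^p(\Omega))$.

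For the convergence $\langle v\rangle_h\to v$ in $C([0,T];L^p(\Omega))$ I would estimate, uniformly in $t$,
\[
\int_\Omega\bigl|\langle v\rangle_h(x,t)-v(x,t)\bigr|^p\,\mathrm{d}x=\int_\Omega\bigl|\Phi([w]_h(x,t))-\Phi(w(x,t))\bigr|^p\,\mathrm{d}x.
\]
In the case $p>2$, the Hölder bound gives this is $\le c(p)^p\int_\Omega |[w]_h(x,t)-w(x,t)|^2\,\mathrm{d}x\le c(p)^p\|[w]_h-w\|_{C([0,T];L^2(\Omega))}^2\to 0$. In the case $1<p\le 2$, the Lipschitz-type bound and Hölder's inequality with exponents $\frac{2}{p}$ and $\frac{2}{2-p}$ give
\[
\int_\Omega\bigl|\Phi([w]_h)-\Phi(w)\bigr|^p\,\mathrm{d}x\le c\int_\Omega (|[w]_h|+|w|)^{2-p}|[w]_h-w|^p\,\mathrm{d}x\le c\,\|\,|[w]_h|+|w|\,\|_{L^2(\Omega)}^{2-p}\,\|[w]_h-w\|_{L^2(\Omega)}^{p},
\]
and the first factor is bounded uniformly in $t$ and $h$ (by Lemma~\ref{mollification lemma}\,(i) applied to $w$, which keeps $\|[w]_h(t)\|_{L^2}$ controlled by $\|w\|_{L^\infty(0,T;L^2)}$ — one may also simply invoke that $[w]_h\to w$ in $C([0,T];L^2)$ so $\sup_{t,h}\|[w]_h(t)\|_{L^2}<\infty$), while the second factor tends to $0$ uniformly in $t$. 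In both cases $\sup_{t\in[0,T]}\|\langle v\rangle_h(t)-v(t)\|_{L^p(\Omega)}\to 0$ as $h\searrow 0$, which is the claim.

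The main obstacle is purely the nonlinear nature of $\Phi$ when $1<p<2$: there $\Phi$ fails to be Lipschitz near $0$ (its derivative blows up), so one cannot bound $\|\Phi(f)-\Phi(g)\|_{L^p}$ by $\|f-g\|_{L^2}$ directly and must instead use the weighted bound from Lemma~\ref{Algs} and absorb the singular weight via Hölder's inequality, paying attention that the exponents $\frac{2}{p}$ and $\frac{2}{2-p}$ are conjugate and that the weight $(|[w]_h|+|w|)^{2-p}$ lands in $L^{2/(2-p)}$ uniformly. A minor point worth stating explicitly is that the definition of $\langle v\rangle_h$ in the statement has an evident typo (both branches read ``$v\neq0$''); the intended convention is $\langle v\rangle_h=0$ where $[w]_h=0$, which is automatic since $\Phi(0)=0$, so the set where $v=0$ versus $[w]_h=0$ causes no difficulty. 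Everything else is routine measure theory (Egorov/dominated convergence) built on top of Lemma~\ref{mollification lemma}\,(v).
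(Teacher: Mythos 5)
Your proposal is correct and follows essentially the same route as the paper: observe $w=|v|^{\frac{p-2}{2}}v\in L^\infty(0,T;L^2(\Omega))$, invoke Lemma~\ref{mollification lemma}-(v) for $[w]_h$, then transfer back to $L^p$ using Lemma~\ref{Algs} (your $\Phi$-estimates with $\beta=\tfrac{2}{p}$ are algebraically equivalent to the paper's application with $\beta=\tfrac{p}{2}$), with the same Hölder split with exponents $\bigl(\tfrac{2}{p},\tfrac{2}{2-p}\bigr)$ when $p<2$. The only cosmetic difference is that you handle the continuity step via a.e.\ convergence and a generalized dominated-convergence argument, whereas the paper simply re-uses the same quantitative $L^p$ estimate for $\|\langle v\rangle_h(t_1)-\langle v\rangle_h(t_2)\|_{L^p}$ as it does for $\|\langle v\rangle_h(t)-v(t)\|_{L^p}$, which is a bit more direct.
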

\begin{proof}
First of all, by the assumption that $v \in L^\infty(0,T\,;L^p(\Omega))$, $w:=|v|^{\frac{p-2}{2}}v$ belongs to $L^\infty(0,T\,;L^2(\Omega))$. Thus, Lemma~\ref{mollification lemma}-(v) yields that
\begin{equation}\label{mollification eq.1}
[w]_h \in C([0,T]\,;L^2(\Omega))
\end{equation}
and
\begin{equation}\label{mollification eq.2}
[w]_h \to w \quad\textrm{in}\,\,\,C([0,T]\,;L^2(\Omega))\quad \textrm{as}\,\,\, h\,\downarrow\,0.
\end{equation}
We shall show the first implication. For this, we now distinguish two cases between $p \geq 2$ and $1\leq p<2$. As we are considering the case $p \geq 2$, applying Lemma~\ref{Algs} with $\beta=\frac{p}{2}$, we infer that for every $0 \leq t_1<t_2 \leq T$
\begin{align*}
\big|\langle v \rangle_h (t_1)-\langle v \rangle_h(t_2)\big|^{\frac{p}{2}} \leq c \Big||\langle v \rangle_h|^{\frac{p-2}{2}}\langle v \rangle_h(t_1)-|\langle v \rangle_h|^{\frac{p-2}{2}}\langle v \rangle_h(t_2)\Big|=c\big|[w]_h(t_1)-[w]_h(t_2)\big|
\end{align*}
and therefore, this together with~\eqref{mollification eq.2} implies that
\[
\big\|\langle v \rangle_h (t_1)-\langle v \rangle_h(t_2)\big\|_{L^p(\Omega)}^p \leq c \big\|[w]_h(t_1)-[w]_h(t_2)\big\|_{L^2(\Omega)}^2 \to 0
\]
as $t_2-t_1 \to 0$. in the remaining  case $1\leq p<2$, we again use Lemma~\ref{Algs} with $\beta=\frac{p}{2}$ to get
\begin{align*}
\Big(|\langle v \rangle_h(t_1)|+|\langle v \rangle_h(t_2)|\Big)^{\frac{p-2}{2}}\big|\langle v \rangle_h(t_1)-\langle v \rangle_h(t_2)\big| &\leq c \Big||\langle v \rangle_h|^{\frac{p-2}{2}}\langle v \rangle_h(t_1)-|\langle v \rangle_h|^{\frac{p-2}{2}}\langle v \rangle_h(t_2)\Big| \\[3mm]
&=c\big|[w]_h(t_1)-[w]_h(t_2)\big|
\end{align*}
and thus,
\[
\int_\Omega \Big(|\langle v \rangle_h(t_1)|+|\langle v \rangle_h(t_2)|\Big)^{p-2}\big|\langle v \rangle_h(t_1)-\langle v \rangle_h(t_2)\big|^2\dx \leq c\big\|[w]_h(t_1)-[w]_h(t_2)\big\|_{L^2(\Omega)}^2 \to 0
\]
as $t_2-t_1 \to 0$. Using this, ~\eqref{mollification eq.1} and H\"{o}lder's inequality with the exponent $\left(\frac{2}{p},\frac{2}{2-p}\right)$, we infer that
\begin{align*}
\int_\Omega |\langle v \rangle_h(t_1)-\langle v \rangle_h(t_2)|^p\dx &\leq \left[\int_\Omega \Big(|\langle v \rangle_h(t_1)|+|\langle v \rangle_h(t_2)|\Big)^{p-2}\big|\langle v \rangle_h(t_1)-\langle v \rangle_h(t_2)\big|^2\dx\right]^{\frac{p}{2}} \\[3mm]
&\quad \quad \quad \times \left[\int_\Omega \Big(|\langle v \rangle_h(t_1)|+|\langle v \rangle_h(t_2)|\Big)^p\dx\right]^{\frac{2-p}{2}} \\[3mm]
&\leq c\big\|[w]_h(t_1)-[w]_h(t_2)\big\|_{L^2(\Omega)}^2 \Big(\|[w]_h(t_1)\|_{L^2(\Omega)}^2+\|[w]_h(t_2)\|_{L^p(\Omega)}^p\Big)^{\frac{2-p}{2}} \\[3mm]
&\to 0
\end{align*}
as $t_2-t_1 \to 0$, where in the last line we used 
\[
\|\langle v \rangle_h(t_i)\|_{L^p(\Omega)}^p=\|[w]_h(t_i)\|_{L^2(\Omega)}^2 \quad \textrm{for} \,\, i=1,2.
\]
Therefore we conclude that, for every $p \geq 1$,
\[
\|\langle v \rangle_h(t_1)-\langle v \rangle_h(t_2)\|_{L^p(\Omega)} \to 0 \quad \textrm{as}\,\,\,t_2-t_1\to 0,
\]
as desired. Similarly as above, by the use of~\eqref{mollification eq.2} we deduce that,
\begin{align*}
&\|\langle v \rangle_h(t)-v(t)\|_{L^p(\Omega)}^p \\[3mm]
&
\leq \begin{cases}
c \big\|[w]_h(t)-w(t)\big\|_{L^2(\Omega)}^2\quad &\textrm{for}\,\,\,p\geq 2 \\[2mm]
c\big\|[w]_h(t)-w(t)\big\|_{L^2(\Omega)}^2\Big(\|[w]_h(t)\|_{L^2(\Omega)}^2+\|w(t)\|_{L^2(\Omega)}^2\Big)^{\frac{2-p}{2}} \quad &\textrm{for}\,\,\,1\leq p< 2
\end{cases}
\\[3mm]
&\to 0
\end{align*}
as $h\searrow 0$, showing the second implication and therefore, the proof is complete.
\end{proof}

\subsection{Mollified weak formulation~\eqref{D2}}
We next list the mollified version of~\eqref{D2} in Definition~\ref{weak sol}, whose proof is based on the Fubini theorem for the double integral, see~\cite[Lemma 2.10]{Nakamura} for the details. This enables us to choose various testing functions in rigorous way.   
\begin{lem}
Let $u$ be a weak sub(super)-solution to~\eqref{maineq} in the sense of Definition~\ref{weak sol}. Then for every nonnegative $\varphi \in \mathcal{T}$, there holds that
\begin{align}\label{D2'}
&\iint_{\Omega_T}\left(\partial_t[|u|^{p-2}u]_{h}\,\varphi+\left[\mathbf{A}\left(x,t,u,Du\right)\right]_h\cdot D\varphi\right)\dxt \notag \\[2mm]
&\quad +\int_0^T\iint_{\bR^n \times \bR^n} \left[U(x,y,t)K(x,y,t)\right]_h\left(\varphi(x,t)-\varphi(y,t)\right)\dxyt \notag\\[2mm]
\begin{split}
&\quad\,\leq\\[-2mm]
&\quad (\geq)
\end{split}
\int_\Omega |u|^{p-2}u(0)\left(\dfrac{1}{h}\int_0^Te^{\frac{s}{h}}\varphi(x,s)\ds\right)\dx.
\end{align}
\end{lem}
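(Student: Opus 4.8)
The plan is to start from the weak formulation~\eqref{D2} and regularize the first term in time using the exponential mollification $[\cdot]_h$ from~\eqref{def. mollification}. The strategy is to apply the mollification to the identity~\eqref{D2} rather than to the solution directly; the only subtle point is that~\eqref{D2} is an integrated (in time) inequality, so one cannot simply mollify pointwise. Instead I would fix a nonnegative test function $\varphi \in \mathcal{T}$, and for each $t \in (0,T)$ plug into~\eqref{D2} the test function $(x,s)\mapsto \psi_{h,t}(x,s):= \tfrac1h e^{\frac{s-t}{h}}\varphi(x,t)\,\mathbf{1}_{(0,t)}(s)$ — more precisely a regularized/admissible variant of it, since this function is not in $\mathcal{T}$ because of the jump at $s=t$ and the fact that it does not vanish at $s=T$ when $t=T$. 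One handles this by an approximation: replace the sharp cutoff $\mathbf{1}_{(0,t)}$ by a Lipschitz cutoff $\eta_\varepsilon$ supported in $(0,t)$ with $\eta_\varepsilon \nearrow \mathbf{1}_{(0,t)}$, verify the resulting function lies in $\mathcal{T}$ for each $\varepsilon$, pass $\varepsilon \to 0$ using dominated convergence, and finally integrate the resulting inequality over $t \in (0,T)$.

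The heart of the computation is then Fubini's theorem applied to the double (in $s$ and $t$) time integrals. For the parabolic term, $\iint_{\Omega_T} |u|^{p-2}u(x,s)\cdot \partial_t\psi_{h,t}(x,s)$, integrating in $t$ and swapping the order of the $s$- and $t$-integration produces exactly $\iint_{\Omega_T}\partial_t[|u|^{p-2}u]_h\,\varphi\dxt$ up to the boundary contribution at $s=0$, which is precisely the right-hand side term $\int_\Omega |u|^{p-2}u(0)\big(\tfrac1h\int_0^T e^{s/h}\varphi(x,s)\ds\big)\dx$; this is where the ODE from Lemma~\ref{mollification lemma}-(ii), namely $\partial_t[v]_h=-\tfrac{[v]_h-v}{h}$, enters, together with the time-continuity $|u|^{p-2}u\in C([0,T];L^p(\Omega))$ (remarked after Definition~\ref{weak sol}) which gives meaning to the initial trace $|u|^{p-2}u(0)$. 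For the diffusion term $\mathbf{A}(x,s,u,Du)\cdot D\varphi(x,t)$ and for the nonlocal term $U(x,y,s)K(x,y,s)(\varphi(x,t)-\varphi(y,t))$, the test function factorizes as (function of $x$, or of $x$ and $y$)$\times \tfrac1h e^{\frac{s-t}{h}}\mathbf{1}_{(0,t)}(s)$, so integrating in $t$ and applying Fubini converts $\int_s^T \tfrac1h e^{\frac{s-t}{h}}\,\cdot\,\dt$ acting on these terms into exactly $[\mathbf{A}(x,t,u,Du)]_h\cdot D\varphi$ and $[U(x,y,t)K(x,y,t)]_h(\varphi(x,t)-\varphi(y,t))$ respectively — here one needs the integrability of $\mathbf{A}(\cdot,u,Du)\in L^{p'}(\Omega_T)$ (from the growth condition~\eqref{S1}${}_2$ and $u\in L^p(0,T;W^{1,p}(\Omega))$) and of the nonlocal density to justify Fubini, plus Lemma~\ref{mollification lemma}-(i) to ensure $[\cdot]_h$ is well-defined on these.

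I expect the main obstacle to be the bookkeeping around the boundary term at $s=0$ and the non-admissibility of the sharp test function $\psi_{h,t}$: making the approximation argument rigorous requires care that the Lipschitz-in-time cutoffs $\eta_\varepsilon$ produce functions genuinely in $\mathcal{T}$ (in particular that $\partial_t(\eta_\varepsilon \varphi)\in L^p(\Omega_T)$ and the correct vanishing at $s=0,T$), and that the limit $\varepsilon\to 0$ recovers the boundary term with the right constant and sign, using the continuity of $t\mapsto |u|^{p-2}u(t)$ in $L^p(\Omega)$. The nonlocal term also needs a brief justification that $\int_0^T\iint_{\bR^n\times\bR^n}|U(x,y,t)|K(x,y,t)|\varphi(x,t)-\varphi(y,t)|\dxyt<\infty$ so that Fubini applies — this follows from $u\in L^p(0,T;W^{1,p}(\Omega))\cap L^\infty(0,T;L^{p-1}_{sp}(\bR^n))$, the bound~\eqref{S3} on $K$, and $\varphi\in L^p(0,T;W^{1,p}_0(\Omega))$ via a standard splitting of the domain of integration into $|x-y|<1$ and $|x-y|\geq 1$. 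Since the full details are carried out in~\cite[Lemma 2.10]{Nakamura}, I would present the argument in the compressed form above, emphasizing the Fubini step and the emergence of the initial-datum term, and refer to loc.\ cit.\ for the routine verifications.
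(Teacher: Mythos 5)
Your proposal is correct and takes essentially the same route the paper indicates: the proof \emph{is} a Fubini argument on the double time integral, and testing \eqref{D2} against the parametric exponential kernel you describe (after Lipschitz regularization and letting $\varepsilon\to0$, then integrating over $t$) is just an unpacked version of plugging the backward mollification $[\varphi]_{\bar h}$ into \eqref{D2}, with the failure of $[\varphi]_{\bar h}(\cdot,0)=0$ producing the boundary term. One small point worth flagging: if you actually execute the $s$-integration by parts and the Fubini swap, the boundary term that emerges is
\[
\int_\Omega |u|^{p-2}u(0)\left(\frac1h\int_0^T e^{-\frac{s}{h}}\varphi(x,s)\,\mathrm{d}s\right)\mathrm{d}x
=\int_\Omega |u|^{p-2}u(0)\,[\varphi]_{\bar h}(x,0)\,\mathrm{d}x,
\]
with a \emph{decaying} exponential $e^{-s/h}$; the $e^{s/h}$ in the displayed formula \eqref{D2'} is a sign typo, as is confirmed by the way the lemma is actually invoked in \eqref{log-type Caccioppoli eq.4} and \eqref{supsub eq.5}, where $e^{-s/h}$ appears.
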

%
%
%

\section{Quantitative estimates for supersolutions}\label{Sect. 3}

In this section we give quantitative estimates for supersolutions. This section is twofold. First we prove the Reverse H\"{o}lder inequality for supersolutions, whose proof is based on the Moser's iteration scheme. Besides, we shall derive the log-type estimate, that will be key ingredient to prove Theorem~\ref{mainthm}.

\subsection{Reverse H\"{o}lder's inequality}\label{Sect. 3.1}
In order to prove the Reverse H\"{o}lder inequality (Proposition~\ref{reverse Holder}), we need some preliminary results.%

The first step is to establish the following Caccioppoli type inequality. 
\begin{lem}[Caccioppoli type estimate for supersolutions]\label{Caccioppoli1}
Let $u$ be a weak supersolution to~\eqref{maineq} fulfilling $u \geq m>0$ in $\bR^n \times (t_0-\rho^p,t_0)$. With $p >1$ let $\varepsilon \in (0,p-1)$ and set $\alpha:=p-1-\varepsilon$. Then
\begin{align*}
&\sup_{t \in (t_0-\rho^p,t_0)}\int_{B_\rho(x_0) \times \{t\}}u^\alpha\varphi^p\dx+\iint_{Q^-_\rho(z_0)}|Du|^pu^{-\varepsilon-1}\varphi^p\dxt\\[4mm]
&\quad \quad \quad \quad \quad +\int_{t_0-\rho^p}^{t_0}\iint_{B_\rho(x_0)\times B_\rho(x_0)}\dfrac{\big|u(x,t)^{\frac{\alpha}{p}}\varphi(x,t)-u(y,t)^{\frac{\alpha}{p}}\varphi(y,t)\big|^p}{|x-y|^{n+sp}}\dxyt\\[4mm]
&\leq c\iint_{Q^-_\rho(z_0)}u^\alpha \varphi^{p-1}|\varphi_t|\dxt+c\iint_{Q^-_\rho(z_0)}u^\alpha |D\varphi|^p\dxt\\[4mm]
&\quad \quad +c\int_{t_0-\rho^p}^{t_0}\iint_{B_\rho(x_0)\times B_\rho(x_0)}\dfrac{\big(u(x,t)^{\alpha}+u(y,t)^{\alpha}\big)\big|\varphi(x,t)-\varphi(y,t)\big|^p}{|x-y|^{n+sp}}\dxyt \\[4mm]
&\quad \quad +c \left(\sup_{x \in \supp \,\varphi(\cdot, t)}\int_{\bR^n \setminus B_\rho(x_0)}\dfrac{\dy}{|x-y|^{n+sp}}\right)\iint_{Q^-_\rho(z_0)}u^\alpha\varphi^p\dxt
\end{align*}
holds whenever nonnegative $\varphi \in C_0^\infty(Q^-_\rho(z_0))$, where the constant $c \equiv c(p,c_0,c_1,\Lambda,\varepsilon)$ blows up as $\varepsilon \searrow 0$ and $\varepsilon \nearrow  p-1$.
\end{lem}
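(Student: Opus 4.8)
The plan is to test the mollified weak formulation~\eqref{D2'} with the function $\varphi^p u^{-\varepsilon}$ (more precisely, with a regularization of it built from the time-mollified solution, say $\langle u\rangle_h^{-\varepsilon}\varphi^p$ in the spirit of Lemma~\ref{mollification lemma2}, so that the test function genuinely lies in $\mathcal{T}$), and then let $h\searrow0$. There are three kinds of terms to handle: the parabolic time term, the local diffusion term coming from $\mathbf{A}$, and the nonlocal term coming from $\mathcal{L}$.

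For the time term, after mollification one works with $\partial_t[|u|^{p-2}u]_h\cdot\varphi^p\langle u\rangle_h^{-\varepsilon}$ and uses the elementary identity that, for the primitive $g(a):=\int_m^a r^{p-2}r^{-\varepsilon}\,\mathrm{d}r \sim a^{\alpha}$ (up to constants, since $\alpha=p-1-\varepsilon$), one has $\partial_t(|u|^{p-2}u)\,u^{-\varepsilon}=\partial_t\big(c_{p,\varepsilon}u^{\alpha}\big)$ in a suitable chain-rule sense. Integrating by parts in $t$ against $\varphi^p$ then produces $\sup_t\int_{B_\rho}u^{\alpha}\varphi^p\,\mathrm{d}x$ on the good side and $\iint u^{\alpha}\varphi^{p-1}|\varphi_t|$ on the right, after passing $h\to0$ with the convergences from Lemma~\ref{mollification lemma2}; the usual care with the sign of $-\varepsilon<0$ must be taken, which is exactly why $\varepsilon<p-1$, i.e. $\alpha>0$, is needed. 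For the local term, $\mathbf{A}(x,t,u,Du)\cdot D(\varphi^p u^{-\varepsilon})=\varphi^p\mathbf{A}\cdot Du\,(-\varepsilon)u^{-\varepsilon-1}+p\varphi^{p-1}u^{-\varepsilon}\mathbf{A}\cdot D\varphi$; the first piece, using the ellipticity $\mathbf{A}\cdot Du\ge c_0|Du|^p$ and the sign $-\varepsilon<0$, is moved to the left as $+c_0\varepsilon\iint|Du|^pu^{-\varepsilon-1}\varphi^p$, and the cross term is absorbed by Young's inequality at the cost of $\iint u^{\alpha}|D\varphi|^p$, the growth bound $|\mathbf{A}|\le c_1|Du|^{p-1}$ being used here.

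The nonlocal term is the heart of the matter and the main obstacle. After mollification and $h\to0$ one is left with
\[
\int_{t_0-\rho^p}^{t_0}\iint_{\bR^n\times\bR^n}U(x,y,t)K(x,y,t)\big(\varphi(x,t)^pu(x,t)^{-\varepsilon}-\varphi(y,t)^pu(y,t)^{-\varepsilon}\big)\,\mathrm{d}x\,\mathrm{d}y\,\mathrm{d}t,
\]
which one splits as $\iint_{B_\rho\times B_\rho}+2\iint_{B_\rho\times(\bR^n\setminus B_\rho)}$ using symmetry of $K$. On the diagonal block $B_\rho\times B_\rho$ one applies Lemma~\ref{fractional est.} pointwise with $a=u(y,t)$, $b=u(x,t)$, $\tau_1=\varphi(y,t)$, $\tau_2=\varphi(x,t)$; since $U(x,y)=|b-a|^{p-2}(b-a)$ this yields a lower bound $c\zeta(\varepsilon)|\varphi(x)u(x)^{\alpha/p}-\varphi(y)u(y)^{\alpha/p}|^p$ minus an error $(\zeta(\varepsilon)+1+\varepsilon^{-(p-1)})|\varphi(x)-\varphi(y)|^p(u(x)^{\alpha}+u(y)^{\alpha})$; after dividing by $|x-y|^{n+sp}$, integrating, and using $\Lambda^{-1}\le K|x-y|^{n+sp}\le\Lambda$, the good term becomes the Gagliardo seminorm of $u^{\alpha/p}\varphi$ on the left, and the error becomes the third term on the right-hand side. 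The off-diagonal block needs a separate estimate: here one bounds $|U(x,y)|\le|u(x)-u(y)|^{p-1}$, drops the $\varphi(y)^pu(y)^{-\varepsilon}$ contribution by a sign consideration (it has the favorable sign when $\varphi(y)=0$ outside $\supp\varphi$; in general one keeps only $\varphi(x)^pu(x)^{-\varepsilon}$ since $\varphi(y)=0$ for $y\notin B_\rho$ when $\supp\varphi(\cdot,t)\subset B_\rho$), uses $u\ge m>0$ and the crude bound $u(x)^{-\varepsilon}u(x)=u(x)^{1-\varepsilon}\le u(x)^{\alpha}\cdot(\text{const depending on }m)$... more carefully, one writes the off-diagonal contribution as controlled by $\big(\sup_{x\in\supp\varphi(\cdot,t)}\int_{\bR^n\setminus B_\rho}|x-y|^{-n-sp}\,\mathrm{d}y\big)\iint u^{\alpha}\varphi^p$, which is precisely the last term on the right-hand side; the positivity $u\ge m$ is what lets one trade the $u^{p-1-\varepsilon}$-type expression appearing there against $u^{\alpha}$. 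Finally one collects all the good terms on the left, divides through by the $\varepsilon$-dependent constant $\min(c_0\varepsilon, c\zeta(\varepsilon),1)$, and records that the resulting constant blows up both as $\varepsilon\searrow0$ (through $\varepsilon^{-1}$ and $\zeta(\varepsilon)^{-1}$) and as $\varepsilon\nearrow p-1$ (through $\zeta(\varepsilon)=\varepsilon(p/\alpha)^p$ and $\alpha\to0$), which matches the claimed dependence $c\equiv c(p,c_0,c_1,\Lambda,\varepsilon)$. I expect the delicate bookkeeping in the off-diagonal nonlocal block — making the sign argument rigorous and getting the tail term in exactly the stated form — to be the step requiring the most care, and it is deferred to Appendix~\ref{Appendix A} in the paper's structure.
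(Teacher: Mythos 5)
Your proposal is correct and follows essentially the same route as the paper's Appendix A proof: test the mollified formulation~\eqref{D2'} with (a time-regularized version of) $\varphi^p u^{-\varepsilon}$, obtain the sup-term and the $|Du|^p u^{-\varepsilon-1}$ term from the parabolic and local pieces, split the nonlocal term into $B_\rho\times B_\rho$ plus $B_\rho\times(\bR^n\setminus B_\rho)$, apply Lemma~\ref{fractional est.} on the diagonal block, and use $\varphi(y)=0$ off $B_\rho$ together with the sign of $U$ on $\{u(x)\ge u(y)\}$ and the bound $(u(x)-u(y))_+^{p-1}u(x)^{-\varepsilon}\le u(x)^{\alpha}$ for the off-diagonal tail term. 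The only cosmetic differences are that the paper keeps $u^{-\varepsilon}$ unmollified (handling the mismatch with the monotonicity inequality $\mathbf{I}_1\le 0$) rather than replacing it by $\langle u\rangle_h^{-\varepsilon}$, and that $u^{p-1-\varepsilon}$ is exactly $u^{\alpha}$ so no trade via $m$ is needed in the tail; these do not affect the validity of your plan.
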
 
For the readability, we shall postpone the proof to Appendix~\ref{Appendix A}.

\medskip

The subsequent lemma is the starting point of Moser's iteration scheme that will be argued later in the proof of Proposition~\ref{reverse Holder}.
\begin{lem}\label{reverse lemma}
Let $u$ be a weak supersolution to~\eqref{maineq} fulfilling $u \geq m>0$ in $\bR^n \times (t_0-\rho^p,t_0)$. Let $\varepsilon \in (0,p-1)$ and set $\alpha:=p-1-\varepsilon$ and $\kappa:=\frac{n+p}{n}$. Then for any concentric $Q_r^-(z_0) \subset Q^-_\rho(z_0) \Subset \Omega_T$ the quantitative estimate 
\begin{align*}
\left(\fiint_{Q^-_r(z_0)}u^{\alpha \kappa}\dxt\right)^{\frac{1}{\alpha \kappa}}\leq c^{\frac{1}{\alpha \kappa}}\left[\left(\frac{\rho}{r}\right)^n\left(\frac{\rho}{\rho-r}\right)^{n+sp}(1+\rho^{(1-s)p})\right]^{\frac{1}{\alpha}}\left(\fiint_{Q^-_\rho(z_0)}u^\alpha\dxt\right)^{\frac{1}{\alpha}}
\end{align*}
holds true, where $c=c(n,s,p,\varepsilon)$ blows up as $\varepsilon \searrow 0$ or $\varepsilon \nearrow p-1$.
\end{lem}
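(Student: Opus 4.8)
\textbf{Proof proposal for Lemma~\ref{reverse lemma}.}

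The plan is to combine the Caccioppoli type estimate of Lemma~\ref{Caccioppoli1} with the parabolic Gagliardo--Nirenberg inequality (Lemma~\ref{GN}) applied to the auxiliary function $v:=u^{\alpha/p}$. First I would fix a cutoff function $\varphi \in C_0^\infty(Q^-_\rho(z_0))$ with $0\leq \varphi \leq 1$, $\varphi \equiv 1$ on $Q^-_r(z_0)$, $|D\varphi|\lesssim (\rho-r)^{-1}$ and $|\varphi_t|\lesssim (\rho-r)^{-p}$, and observe that by Lemma~\ref{Caccioppoli1} the left-hand side controls both
\[
\sup_{t}\int_{B_\rho\times\{t\}} v^p\varphi^p\dx \qquad\text{and}\qquad \iint_{Q^-_\rho} |D(v\varphi)|^p\dxt,
\]
the latter after absorbing the term $\iint |Du|^p u^{-\varepsilon-1}\varphi^p\dxt$ together with the elementary inequality $|D(v\varphi)|^p \lesssim |Dv|^p\varphi^p + v^p|D\varphi|^p$ and the identity $|Dv|^p = (\alpha/p)^p\, u^{\alpha-p}|Du|^p = (\alpha/p)^p\, u^{-\varepsilon-1}|Du|^p$. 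The right-hand side of Lemma~\ref{Caccioppoli1}, after inserting the cutoff bounds, is estimated by
\[
c\Big[(\rho-r)^{-p}+(\rho-r)^{-p}\rho^{(1-s)p}+\rho^{-sp}\Big]\iint_{Q^-_\rho} u^\alpha\dxt,
\]
where the middle term comes from the nonlocal cutoff contribution $\iint\!\!\iint \tfrac{(u(x)^\alpha+u(y)^\alpha)|\varphi(x)-\varphi(y)|^p}{|x-y|^{n+sp}}$ bounded via $|\varphi(x)-\varphi(y)|^p\lesssim \min\{1, |D\varphi|^p|x-y|^p\}$ and splitting the $y$-integral at $|x-y|=\rho-r$, and the last term comes from the tail-type factor $\sup_x\int_{\bR^n\setminus B_\rho}\tfrac{\dy}{|x-y|^{n+sp}}\lesssim \rho^{-sp}$.

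Next I would feed these two bounds into Lemma~\ref{GN} with $v\varphi$ in place of $v$, $r=p$ (the Lebesgue exponent) and $\kappa=\frac{n+p}{n}$, over the cylinder $Q^-_\rho(z_0)$, giving
\[
\fiint_{Q^-_\rho} (v\varphi)^{p\kappa}\dxt
\leq c\rho^p\Big(\sup_t\dashint_{B_\rho}(v\varphi)^p(t)\dx\Big)^{p/n}\fiint_{Q^-_\rho}\Big(|D(v\varphi)|^p+\Big|\tfrac{v\varphi}{\rho}\Big|^p\Big)\dxt.
\]
Since $v\varphi\equiv u^{\alpha/p}$ on $Q^-_r$ and $(v\varphi)^{p\kappa}=u^{\alpha\kappa}$ there, the left-hand side bounds $|Q^-_\rho|^{-1}\int_{Q^-_r}u^{\alpha\kappa}\dxt$; on the right I would substitute the Caccioppoli bounds from the first step, then collect the powers of $\rho$, $r$ and $\rho-r$. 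A careful bookkeeping of the measure factors $|Q^-_\rho|\sim\rho^{n+p}$, $|Q^-_r|\sim r^{n+p}$ (to pass from integrals to averages on both sides) and of the exponents $1/n$ and $1$ produces exactly the claimed bracket $\big[(\rho/r)^n(\rho/(\rho-r))^{n+sp}(1+\rho^{(1-s)p})\big]^{1/\alpha}$ after raising everything to the power $\frac{1}{\alpha\kappa}$; the exponent $n+sp$ on $\rho/(\rho-r)$ arises because the worst cutoff term contributes $(\rho-r)^{-p}$ inside one factor and $(\rho-r)^{-sp}$-type contributions (from the nonlocal piece, after rescaling $\rho^{(1-s)p}(\rho-r)^{-p}=\rho^{(1-s)p-sp}(\rho/(\rho-r))^{\,?}\cdots$) must be tracked and bounded crudely by the single power $n+sp$, using $0<r<\rho\leq$ the relevant bound.

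The main obstacle I anticipate is precisely this last bookkeeping: ensuring that \emph{all} the different negative powers of $\rho-r$ (namely $(\rho-r)^{-p}$ from $\varphi_t$ and $|D\varphi|^p$, and the genuinely nonlocal $(\rho-r)^{-p}$-type terms scaled by $\rho^{(1-s)p}$) are uniformly dominated by $\big(\tfrac{\rho}{\rho-r}\big)^{n+sp}(1+\rho^{(1-s)p})$, while the purely local terms and the $\rho^{-sp}$ tail term are absorbed into the factor $(1+\rho^{(1-s)p})$ or the $(\rho/r)^n$ factor; since $\rho\leq 1$ one has $\rho^{-sp}\leq \rho^{-p}\leq(\rho-r)^{-p}$ and $\rho^{(1-s)p}\leq 1$, so the estimates are not tight and a deliberately wasteful bound suffices, but one must check that the exponent on $\rho/r$ is indeed only $n$ (coming solely from the ratio of measures $|Q^-_\rho|/|Q^-_r|$ raised to the power on the left minus the power on the right, i.e.\ $\kappa-1=p/n$ times $n+p$ balanced against the $(\sup)^{p/n}$ factor). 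The nonlocal term in the left-hand side of Lemma~\ref{Caccioppoli1} (the Gagliardo seminorm of $u^{\alpha/p}\varphi$) is simply discarded, being nonnegative, so it creates no difficulty here.
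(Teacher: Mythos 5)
Your overall strategy coincides with the paper's: apply the parabolic Gagliardo--Nirenberg inequality (Lemma~\ref{GN}) to $u^{\alpha/p}\varphi$, feed the sup--term and the gradient term into the Caccioppoli estimate of Lemma~\ref{Caccioppoli1}, then do the measure bookkeeping. The gap is in your treatment of the tail term. You assert
\[
\sup_{x\in\supp\varphi(\cdot,t)}\int_{\bR^n\setminus B_\rho(x_0)}\frac{\dy}{|x-y|^{n+sp}}\ \lesssim\ \rho^{-sp}
\]
for a cutoff $\varphi\in C_0^\infty(Q^-_\rho(z_0))$ with $|D\varphi|\lesssim(\rho-r)^{-1}$. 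This is false without a quantitative lower bound on the distance from $\supp\varphi(\cdot,t)$ to $\partial B_\rho(x_0)$: the integral diverges as $x\to\partial B_\rho$, so its supremum over the support of $\varphi$ can be as large as $(\rho-r)^{-sp}$ and is certainly not $O(\rho^{-sp})$. The paper handles this by prescribing $\supp\varphi\subset Q^-_{\frac{\rho+r}{2}}(z_0)$; then for $x\in B_{\frac{\rho+r}{2}}(x_0)$ and $y\in\bR^n\setminus B_\rho(x_0)$ one has $|y-x|\geq\frac{\rho-r}{2}$, hence $\frac{|y-x_0|}{|y-x|}\leq\frac{2\rho}{\rho-r}$, which gives
\[
\sup_{x\in B_{\frac{\rho+r}{2}}(x_0)}\int_{\bR^n\setminus B_\rho(x_0)}\frac{\dy}{|x-y|^{n+sp}}\ \leq\ c\left(\frac{\rho}{\rho-r}\right)^{n+sp}\rho^{-sp}.
\]
This is precisely where the exponent $n+sp$ on $\rho/(\rho-r)$ enters; it does \emph{not} come from the local cutoff terms $|\varphi_t|,\ |D\varphi|^p$, which contribute only $(\rho-r)^{-p}=\rho^{-p}(\rho/(\rho-r))^p$. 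So without the corrected tail bound your bookkeeping does not close. Everything else in your sketch (discarding the nonnegative Gagliardo seminorm on the left, identifying $|Dv|^p=(\alpha/p)^p u^{-\varepsilon-1}|Du|^p$, using $\rho\leq 1$ to dominate the purely local pieces, and extracting $(\rho/r)^n$ from the ratio of measures) matches the paper's argument.
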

\begin{proof}
Take a cut-off function $\varphi \in C^\infty(Q^-_\rho(z_0))$ satisfying%
\[
\begin{cases}
\supp \,\varphi \subset Q^-_{\frac{\rho+r}{2}}(z_0), \quad 0 \leq \varphi \leq 1 \,\,\,\textrm{in}\,\,\,Q^-_{\rho}(z_0),\quad \varphi \equiv 1\,\,\,\textrm{on}\,\,\,Q^-_r(z_0)\,;\\[2mm]
|D\varphi|\leq \dfrac{c}{\rho-r},\quad |\varphi_t|\ \leq \dfrac{c}{(\rho-r)^p}.
\end{cases}
\]
We now observe that
\[
\fiint_{Q^-_r(z_0)}(u^{\frac{\alpha}{p}})^{p\kappa}\dxt=\fiint_{Q^-_r(z_0)}(u^{\frac{\alpha}{p}}\varphi)^{p\kappa}\dxt \leq \left(\frac{\rho}{r}\right)^{n+p}\fiint_{Q^-_\rho(z_0)}(u^{\frac{\alpha}{p}}\varphi)^{p\kappa}\dxt. 
\]
Applying the Gagliardo-Nirenberg inequality~\eqref{GNeq.} in Lemma~\ref{GN} to $v\equiv u^{\frac{\alpha}{p}}\varphi$ and the Caccioppoli type estimate (Lemma~\ref{Caccioppoli1}) yield that
\begin{align*}
&\fiint_{Q^-_\rho(z_0)}(u^{\frac{\alpha}{p}}\varphi)^{p\kappa}\dxt \\[3mm]
&\leq c\rho^p \left(\sup_{t \in (t_0-\rho^p,t_0)}\dashint_{B_\rho(x_0)}(u^{\frac{\alpha}{p}}\varphi)^p\dx \right)^{\frac{p}{n}}\fiint_{Q^-_\rho(z_0)}\left(|D(u^{\frac{\alpha}{p}}\varphi)|^p+\left|\frac{u^{\frac{\alpha}{p}}\varphi}{\rho}\right|^p\right)\dxt \\[4mm]
&\leq \frac{c}{\rho^{n+p}}\left(\sup_{t \in (t_0-\rho^p,t_0)}\int_{B_\rho(x_0)}u^\alpha\varphi^p\dx \right)^{\frac{p}{n}}\iint_{Q^-_\rho(z_0)}\left(u^\alpha|D\varphi|^p+\alpha^pu^{\alpha-p}|Du|^p\varphi^p+\frac{u^\alpha \varphi^p}{\rho^p}\right)\dxt \\[4mm]
&\leq \frac{c}{\rho^{n+p}}\left[\iint_{Q^-_\rho(z_0)}\left(u^\alpha\varphi^{p-1}|\varphi_t|+u^\alpha|D\varphi|^p+\frac{u^\alpha \varphi^p}{\rho^p}\right)\dxt \right.\\[4mm]
&\quad \quad \quad \quad \left.+\int_{t_0-\rho^p}^{t_0}\iint_{B_\rho(x_0)\times B_\rho(x_0)}\frac{\left(u(x,t)^\alpha+u(y,t)^\alpha\right)|\varphi(x,t)-\varphi(y,t)|^p}{|x-y|^{n+sp}}\dxyt \right.\\[4mm]
&\quad \quad \quad \quad \left.+\left(\sup_{x \in \supp\,\varphi(\cdot, t)}\int_{\bR^n \setminus B_\rho(x_0)}\dfrac{\dy}{|x-y|^{n+sp}}\right)\iint_{Q^-_\rho(z_0)}u^\alpha \varphi^p\dxt \right]^\kappa
 \\[4mm]
&\leq \frac{c}{\rho^{n+p}}\left[\iint_{Q^-_\rho(z_0)}\frac{1}{(\rho-r)^p}u^\alpha\dxt \right.\\[4mm]
&\quad \quad \quad \quad \left.+\int_{t_0-\rho^p}^{t_0}\iint_{B_\rho(x_0)\times B_\rho(x_0)}\frac{\left(u(x,t)^\alpha+u(y,t)^\alpha\right)}{|x-y|^{n+(1-s)p}}|D\varphi|^p\dxyt \right.\\[4mm]
&\quad \quad \quad \quad \left.+\left(\sup_{x \in B_{\frac{\rho+r}{2}}(x_0)}\int_{\bR^n \setminus B_\rho(x_0)}\dfrac{\dy}{|x-y|^{n+sp}}\right)\iint_{Q^-_\rho(z_0)}u^\alpha \varphi^p\dxt \right]^\kappa.
\end{align*}
Thus, combining this with the preceding estimate gives
\begin{align}\label{reverse eq.1}
\left(\fiint_{Q^-_r(z_0)}u^{\alpha \kappa}\dxt\right)^{\frac{1}{\alpha \kappa}} 
&\leq \frac{c}{r^{n+p}}\left[\frac{1}{(\rho-r)^p}\iint_{Q^-_\rho(z_0)}u^\alpha\dxt \right.\notag\\[4mm]
&\quad \quad \quad \quad\left.+\frac{1}{(\rho-r)^p}\int_{t_0-\rho^p}^{t_0}\iint_{B_\rho(x_0)\times B_\rho(x_0)}\frac{\left(u(x,t)^\alpha+u(y,t)^\alpha\right)}{|x-y|^{n+(1-s)p}}\dxyt \right.\notag \\[4mm]
&\quad \quad \quad \quad\left.+\left(\sup_{x \in B_{\frac{\rho+r}{2}}(x_0)}\int_{\bR^n \setminus B_\rho(x_0)}\dfrac{\dy}{|x-y|^{n+sp}}\right)\iint_{Q^-_\rho(z_0)}u^\alpha\dxt \right]^\kappa \notag\\[4mm]
&=:\frac{c}{r^{n+p}}\Big[\mathbf{I}+\mathbf{II}+\mathbf{III}\Big]^\kappa,
\end{align}
where the definition of $\mathbf{I}$-$\mathbf{III}$ are clear from the context and note that, the constant $c$ appearing on the final line has the singularity at $\varepsilon=0$ and $\varepsilon=p-1$. 

We will estimate $\mathbf{I}$-$\mathbf{III}$ separately. Rearranging the term $\mathbf{I}$ yields
\begin{equation}\label{reverse eq.2}
\mathbf{I}=\frac{1}{(\rho-r)^p}\rho^{n+p}\fiint_{Q^-_\rho(z_0)}u^\alpha\dxt.
\end{equation}
By exchanging the role of $u(x,t)$ and $u(y,t)$, we have
\begin{align}\label{reverse eq.3}
\mathbf{II}&=\frac{2}{(\rho-r)^p}\int_{t_0-\rho^p}^{t_0}\iint_{B_\rho(x_0)\times B_\rho(x_0)}\frac{u^\alpha(x,t)}{|x-y|^{n-(1-s)p}}\dxyt \notag\\[3mm]
&=\frac{2}{(\rho-r)^p}\iint_{Q^-_\rho(z_0)}u^\alpha(x,t)\left(\int_{B_\rho(x_0)}\frac{\dy}{|x-y|^{n-(1-s)p}}\right)\dxt \notag\\[3mm]
&=\frac{c(n,s,p)}{(\rho-r)^p}\rho^{(1-s)p+n+p}\fiint_{Q^-_\rho(z_0)}u^\alpha\dxt, 
\end{align}
where, in the second line, we computed that
\[
\displaystyle \int_{B_\rho(x_0)}\frac{\dy}{|x-y|^{n-(1-s)p}}=|B_\rho|\frac{\rho^{(1-s)p}}{(1-s)p}. 
\]
We shall handle the term $\mathbf{III}$. Since
\[
\frac{|y-x_0|}{|y-x|}\leq \frac{|y-x|+|x-x_0|}{|y-x|}=1+\frac{|x-x_0|}{|y-x|}
\]
and
\[
|y-x| \geq |y-x_0|-|x_0-x| \geq \rho-\frac{\rho+r}{2}=\frac{\rho-r}{2}
\]
holds whenever $x \in B_{\frac{\rho+r}{2}}(x_0)$ and $y \in \bR^n \setminus B_\rho(x_0)$, we have 
\[
\dfrac{|y-x_0|}{|y-x|} \leq \dfrac{2\rho}{\rho-r}\qquad \forall x \in B_{\frac{\rho+r}{2}}(x_0),\quad \forall y \in \bR^n \setminus B_\rho(x_0).
\]
Thus,
\begin{align}\label{reverse eq.4}
\mathbf{III} &\leq c\left(\frac{\rho}{\rho-r}\right)^{n+sp}\left( \sup_{x \in B_{\frac{\rho+r}{2}}(x_0)}\int_{\bR^n \setminus B_\rho(x_0)}\frac{\dy}{|y-x_0|^{n+sp}}\right)\iint_{Q^-_\rho(z_0)}u^\alpha\dxt \notag\\[4mm]
&=c\left(\frac{\rho}{\rho-r}\right)^{n+sp}\rho^{-sp}\iint_{Q^-_\rho(z_0)}u^\alpha\dxt \notag\\[4mm]
&=c\left(\frac{\rho}{\rho-r}\right)^{n+sp}\rho^{n+(1-s)p}\fiint_{Q^-_\rho(z_0)}u^\alpha\dxt,
\end{align}
where, in the second line we used that $\displaystyle \int_{\bR^n \setminus B_\rho(x_0)}\frac{\dy}{|y-x_0|^{n+sp}}=\frac{c(n)}{sp}\rho^{-sp}$. Noticing that $r^{n+p}=r^{n\kappa}$ and inserting the preceding estimates~\eqref{reverse eq.2}--\eqref{reverse eq.4} back to~\eqref{reverse eq.1} concludes that
\begin{align*}
&\fiint_{Q^-_\rho(z_0)}(u^{\frac{\alpha}{p}}\varphi)^{p\kappa}\dxt \\[4mm]
&\leq \frac{c}{r^{n+p}}\left[\left(\frac{\rho}{\rho-r}\right)^p\rho^n\left(1+\rho^{(1-s)p}+\left(\frac{\rho}{\rho-r}\right)^{n+(s-1)p}\rho^{(1-s)p}\right)\fiint_{Q^-_\rho(z_0)}u^\alpha\dxt \right]^\kappa \\[4mm]
&\leq c\left(\frac{\rho}{r}\right)^{n\kappa}\left[\left(\frac{\rho}{\rho-r}\right)^{n+sp}\left(1+\rho^{(1-s)p}\right)\fiint_{Q^-_\rho(z_0)}u^\alpha\dxt \right]^\kappa,
\end{align*}
which proves the claim.
\end{proof}

We are ready to prove the Reverse H\"{o}lder's inequality about supersolutions.
\begin{prop}[Reverse H\"{o}lder's inequality]\label{reverse Holder}
Assume that $u$ is a weak supersolution to~\eqref{maineq} fulfilling $u\geq m>0$ in $\bR^n \times (t_0-\rho^p,t_0)$. Fix a parameter $\sigma_0 \in (0,1)$. Then, for any $Q_\rho(z_0) \Subset \Omega_T$, there exists a positive constant $C=C(n,s,p,c_0,c_1,\Lambda,\sigma_0)$ such that
\[
\left(\fiint_{Q^-_{\sigma\rho}(z_0)}u^q\dxt\right)^{\frac{1}{q}} \leq C\left(\frac{1+\rho^{(1-s)p}}{(\tau-\sigma)^{n+sp}}\right)^{\frac{(n+p)^2}{np\gamma}} \left(\fiint_{Q^-_{\tau\rho}(z_0)}u^\gamma\dxt\right)^{\frac{1}{\gamma}}
\]
holds whenever $\sigma_0 \leq \sigma<\tau<1$ and $0<\gamma<q<\frac{n+p}{n}(p-1)$.
\end{prop}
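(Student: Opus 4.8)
The plan is to combine the Moser-iteration machinery already in place (the reverse-type estimate of Lemma~\ref{reverse lemma}) with the Giaquinta–Giusti iteration lemma (Lemma~\ref{iteration lemma}) in order to pass from the exponent $\alpha\kappa$ produced by one step to an arbitrary exponent $q<\frac{n+p}{n}(p-1)$, while simultaneously absorbing the loss of domain that each step incurs. First I would set $\kappa=\frac{n+p}{n}$ and observe that, since $0<q<\frac{n+p}{n}(p-1)$, there is $\varepsilon_0\in(0,p-1)$ with $\alpha_0:=p-1-\varepsilon_0$ small enough that a finite number $j_0$ of iterations, $\alpha_j:=\alpha_0\kappa^j$, overshoots $q$, i.e. $\alpha_{j_0}\ge q$; more importantly, to reach the given $\gamma$ from below I would instead run the iteration \emph{downwards} in the exponent, or equivalently use the standard trick of interpolating: the quantity $\bigl(\fiint_{Q^-_{\sigma\rho}}u^q\bigr)^{1/q}$ for $q<\frac{n+p}{n}(p-1)$ is controlled by $\bigl(\fiint u^{\alpha\kappa}\bigr)^{1/(\alpha\kappa)}$ for a suitable $\alpha<p-1$ with $\alpha\kappa>q$, after which Lemma~\ref{reverse lemma} reduces matters to $\bigl(\fiint u^{\alpha}\bigr)^{1/\alpha}$. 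Iterating Lemma~\ref{reverse lemma} along the chain of exponents $\gamma\le\alpha<\alpha\kappa\le\dots$ on a nested family of cylinders $Q^-_{\rho_i}$ with $\rho_i\downarrow\sigma\rho$ chosen geometrically (say $\rho_i-\rho_{i+1}\sim 2^{-i}(\tau-\sigma)\rho$) produces a telescoping product of constants whose exponents sum to a convergent geometric series; this is what yields the final power $\frac{(n+p)^2}{np\gamma}$ on the factor $\frac{1+\rho^{(1-s)p}}{(\tau-\sigma)^{n+sp}}$.

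Concretely, the key steps in order are: (i) fix $\varepsilon$ so that $\alpha:=p-1-\varepsilon$ satisfies $q<\alpha\kappa$ and also $\gamma\le\alpha$ (possible because $\gamma<q<\frac{n+p}{n}(p-1)$ forces $\gamma<p-1$, and one can take $\alpha$ anywhere in $(\max\{\gamma,q/\kappa\},p-1)$); (ii) apply Lemma~\ref{reverse lemma} on the pair $Q^-_{\sigma\rho}\subset Q^-_{\tau\rho}$ — or, if one wants cleaner constants, on a pair of intermediate radii — to get
\[
\Bigl(\fiint_{Q^-_{\sigma\rho}(z_0)}u^{\alpha\kappa}\dxt\Bigr)^{\frac{1}{\alpha\kappa}}
\le c^{\frac1{\alpha\kappa}}\Bigl[\Bigl(\tfrac{\tau}{\sigma}\Bigr)^{n}\Bigl(\tfrac{\tau\rho}{(\tau-\sigma)\rho}\Bigr)^{n+sp}\bigl(1+\rho^{(1-s)p}\bigr)\Bigr]^{\frac1\alpha}\Bigl(\fiint_{Q^-_{\tau\rho}(z_0)}u^{\alpha}\dxt\Bigr)^{\frac1\alpha};
\]
(iii) use Hölder's inequality on the left to drop from exponent $\alpha\kappa$ down to $q$ (since $q<\alpha\kappa$), and on the right note $\fiint u^\alpha\le\bigl(\fiint u^\gamma\bigr)^{\alpha/\gamma}$ when $\gamma\le\alpha$ — wait, that inequality goes the wrong way, so instead I would interpolate $u^\alpha$ between $u^\gamma$ and a higher power and feed the higher power back into the iteration, which is exactly where the \emph{nested-cylinder} Moser scheme rather than a single application is needed; (iv) run this on the chain of radii $\rho_0=\tau>\rho_1>\dots>\rho_i\downarrow\sigma$ and exponents $\gamma<\alpha_1<\alpha_1\kappa=\alpha_2<\dots$, getting at each stage a bound of the form $Y_i\le c_i Y_{i+1}$ with $\prod c_i$ finite because $\sum\kappa^{-i}<\infty$; alternatively, set $\mathcal Z(s)$ to be a $\sup$ of scaled $L^{\alpha}$-averages over $Q^-_{s}$ and verify the hypothesis of Lemma~\ref{iteration lemma}, after absorbing the higher-integrability term by Young's inequality with the factor $\frac34$, then conclude.

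The main obstacle is bookkeeping the \emph{two couplings} simultaneously: the exponent is multiplied by $\kappa$ at each step, while the cylinder shrinks, and the constant in Lemma~\ref{reverse lemma} blows up both as $\varepsilon\searrow0$ and as $\varepsilon\nearrow p-1$, so one must keep $\alpha$ uniformly bounded away from $0$ and $p-1$ throughout the whole chain — this is why only \emph{finitely many} iteration steps are allowed and why $q$ must be strictly below $\frac{n+p}{n}(p-1)$ (one needs room to reach $q$ before $\alpha\kappa^j$ forces $\alpha\downarrow$ too close to the endpoints). The delicate point is therefore choosing $\varepsilon$ (equivalently $\alpha$) as a function only of $n,s,p,\sigma_0,q,\gamma$ so that the number of steps and all the intermediate constants are controlled, and then checking that the telescoped exponent on $\bigl(\frac{1+\rho^{(1-s)p}}{(\tau-\sigma)^{n+sp}}\bigr)$ comes out exactly as $\frac{(n+p)^2}{np\gamma}$; getting the bookkeeping to close with that precise exponent, rather than merely some finite power, is the part that requires care. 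Everything else — the Hölder reductions, the geometric choice of radii, the absorption via Lemma~\ref{iteration lemma} — is routine once the exponent chain is fixed.
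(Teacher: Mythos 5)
Your overall framework is the right one and matches the paper's: iterate Lemma~\ref{reverse lemma} along a geometrically nested family of cylinders between $Q^-_{\sigma\rho}$ and $Q^-_{\tau\rho}$, and sum up the per-step constants into a finite product giving the exponent $\frac{(n+p)^2}{np\gamma}$. But you never close the one step the whole argument hinges on: how to reach the exponent $\gamma$. In (iii) you correctly notice that $\fiint u^{\alpha}\le\bigl(\fiint u^{\gamma}\bigr)^{\alpha/\gamma}$ fails when $\gamma<\alpha$, and then in (iv) you propose an exponent chain $\gamma<\alpha_1<\alpha_1\kappa<\cdots$ that \emph{starts strictly above} $\gamma$ — which leaves exactly the same wrong-way H\"older step at the bottom of the ladder. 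The paper's fix, which you miss, is to choose the number of iterations $k$ with $\frac{\log(q/\gamma)}{\log\kappa}\le k\le\frac{\log(q/\gamma)}{\log\kappa}+1$ and then define the starting exponent \emph{downward} from $q$ as $q_0:=q\kappa^{-k}$; this forces $q_0\in[\gamma/\kappa,\gamma)$, i.e.\ strictly below $\gamma$. With that choice the $k$-fold iteration of Lemma~\ref{reverse lemma} takes $L^{q_0}(Q^-_{\tau\rho})$ to $L^q(Q^-_{\sigma\rho})$, and the final application of Jensen/H\"older, $\bigl(\fiint u^{q_0}\bigr)^{1/q_0}\le\bigl(\fiint u^{\gamma}\bigr)^{1/\gamma}$, runs in the correct direction. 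It also guarantees $q_0,\dots,q_{k-1}<p-1$ (since $q<\kappa(p-1)$), so each step of Lemma~\ref{reverse lemma} is legitimate, and it makes $S(k)=\sum_{i=0}^{k}q_i^{-1}\le\frac{\kappa^2}{\gamma(\kappa-1)}=\frac{(n+p)^2}{np\gamma}$, which is where the stated power arises.

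A secondary remark: your alternative route via Lemma~\ref{iteration lemma} (the Giaquinta--Giusti absorption lemma with a $\tfrac34$-coefficient) is neither used nor needed here; the exponent is multiplied by $\kappa$ at each step, so that lemma, which only tracks the radius, is not the right tool. The paper simply iterates a \emph{finite} number $k$ of times and controls the product $C_{\mathrm{prod}}(k)=c^{S(k)}2^{(n+sp)T(k)}M(k)$ directly, with no absorption. Lemma~\ref{iteration lemma} does appear in this paper, but only later, in the local boundedness proof and in the Bombieri--Giusti step of Theorem~\ref{mainthm}.
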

\begin{proof}
First of all, for every $q,\,\gamma$ satisfying $q>\gamma$, we choose $k \in \mathbb{N}$ so that
\[
\frac{\log\tfrac{q}{\gamma}}{\log \kappa} \leq k \leq \frac{\log\tfrac{q}{\gamma}}{\log \kappa} +1
\]
and for $q_0<\gamma$, we set $q=q_k=q_0\kappa^k$. Next, for $i=0,1,\ldots, k$ let us define
\[
\rho_i:=\left(\tau-(\tau-\sigma)\frac{1-2^{-i}}{1-2^{-k}}\right)\rho, \qquad Q_i:=Q^-_{\rho_i}(z_0)=B_{\rho_i}(x_0)\times (t_0-\rho_i^p,t_0),
\]
and therefore the following inclusions trivially hold:
\[
\rho_0=\tau\rho \geq \cdots \geq \rho_i \geq \rho_k=\sigma \rho,\qquad Q_0=Q^-_{\tau\rho} \supset \cdots \supset Q_i \supset \cdots \supset Q_k=Q^-_{\sigma\rho}.
\]
Applying Lemma~\ref{reverse lemma} with $r=\rho_{i+1}$, $\rho=\rho_i$ and
\[
q_{i+1}=\alpha \kappa, \quad q_i=\alpha\quad (\Longrightarrow  q_i=q_0\kappa^i),
\]
we infer that, for $i=0,1,\ldots, k-1$,
\begin{align}\label{reverse eq.a}
\left(\fiint_{Q_{i+1}}u^{q_{i+1}}\dxt\right)^{\frac{1}{q_{i+1}}}\leq c^{\frac{1}{\alpha \kappa}}\left[\left(\frac{\rho_i}{\rho_{i+1}}\right)^n\left(\frac{\rho_i}{\rho_i-\rho_{i+1}}\right)^{n+sp}(1+\rho_i^{(1-s)p})\right]^{\frac{1}{q_i}}\left(\fiint_{Q_i}u^{q_i}\dxt\right)^{\frac{1}{q_i}}.
\end{align}
We observe that
\[
\frac{\rho_i}{\rho_i-\rho_{i+1}}=\frac{\tau-(\tau-\sigma)\frac{1-2^{-i}}{1-2^{-k}}}{(\tau-\sigma)\frac{2^{-(i+1)}}{1-2^{-k}}}\leq \frac{2^{i+1}\tau}{\tau-\sigma}
\]
and $\rho_i^{(1-s)p} \leq \rho_0^{(1-s)p}=\rho^{(1-s)p}$, which junction with~\eqref{reverse eq.a} yields%
\begin{align}\label{reverse eq.b}
\left(\fiint_{Q_{i+1}}u^{q_{i+1}}\dxt\right)^{\frac{1}{q_{i+1}}}\leq c^{\frac{1}{q_{i+1}}}\left[\left(\frac{\rho_i}{\rho_{i+1}}\right)^n\left(\frac{2^{i+1}\tau}{\tau-\sigma}\right)^{n+sp}(1+\rho^{(1-s)p})\right]^{\frac{1}{q_i}}\left(\fiint_{Q_i}u^{q_i}\dxt\right)^{\frac{1}{q_i}}.
\end{align}
Note that, this estimate~\eqref{reverse eq.b} is only valid for $0<q_i<p-1$\footnote{Since $\{q_i\}_{i=1}^k$ is increasing sequence, it is enough to consider that $0<q=q_k=q_{k-1}\kappa<(p-1)\kappa$.} since the constant $c$ possesses the singularity at $q_i=0$ or $q_i=p-1$. For the rest of the argument, let us denote in short, for $i=0,1,\ldots, k$
\[
Y_i:=\left(\fiint_{Q_i}u^{q_i}\dxt\right)^{\frac{1}{q_i}}.
\]
Iterating~\eqref{reverse eq.b}, for $i=0,1,\ldots , k$, gives that
\begin{align}\label{reverse eq.c}
\left(\fiint_{Q_{\sigma\rho}}u^{q}\dxt\right)^{\frac{1}{q}}=Y_k &\leq c^{\frac{1}{q_k}}\left(\frac{\rho_{k-1}}{\rho_k}\right)^{\frac{1}{q_k}}2^{(n+sp)\frac{k-1}{q_{k-1}}}\left(\frac{1+\rho^{(1-s)p}}{(\tau-\sigma)^{n+sp}}\right)^{\frac{1}{q_{k-1}}}Y_{k-1} \notag\\
& \,\,\, \vdots \notag\\
&\leq c^{S(k)}2^{(n+sp)T(k)}M(k)\left(\frac{1+\rho^{(1-s)p}}{(\tau-\sigma)^{n+sp}}\right)^{S(k)}Y_0,
\end{align}
where
\begin{align*}
S(k)&:=\sum_{i=0}^k\frac{1}{q_i},\quad T(k):=\sum_{i=0}^k\frac{i}{q_{i}} \\[2mm]
M(k)&:=\prod_{i=1}^k \left(\frac{\rho_{i-1}}{\rho_i}\right)^{\frac{1}{q_i}}.
\end{align*}
We estimate the above quantities $S(k)$, $T(k)$ and $M(k)$. Since 
\[
\gamma \kappa^{k-1} \leq q_0\kappa^k=q_k \iff q_0 \geq \gamma/\kappa, 
\]
a straightforward computation implies that
\begin{equation*}
S(k)=\sum_{i=0}^k\frac{1}{q_0\kappa^i}=\frac{\kappa}{q_0(\kappa-1)}\left[1-\left(\frac{1}{\kappa}\right)^k\right]\leq \frac{\kappa^2}{\gamma (\kappa-1)}
\end{equation*}
and 
\begin{align*}
T(k)=\sum_{i=0}^k\frac{i}{q_0\kappa^{i}}=-\frac{k}{q_0(\kappa-1)\kappa^{k-1}}+\frac{\kappa}{q_0(\kappa-1)^2}\left[1-\left(\frac{1}{\kappa}\right)^k\right]\leq \frac{\kappa^2}{\gamma (\kappa-1)^2}.
\end{align*}
Also, we estimate that
\begin{align*}
M(k)&=\exp\left(\sum_{i=1}^k\frac{1}{q_0\kappa^i}\log\left(\frac{\rho_{i-1}}{\rho_i}\right) \right)\leq \sum_{i=1}^k\frac{1}{q_0\kappa}\left(\log \rho_{i-1}-\log \rho_i\right) \\[2mm]
&\leq \exp\left(\frac{1}{\gamma}\log\left(\frac{1}{\sigma}\right)\right)\leq \left(\frac{1}{\sigma_0}\right)^{\frac{1}{\gamma}}.
\end{align*}
Therefore, 
\[
C_{\textrm{prod}}(k):=c^{S(k)}2^{(n+sp)T(k)}M(k)
\]
is uniformly bounded on $k$ and estimated as
\[
C_{\textrm{prod}}(k)\leq c^{\frac{\kappa^2}{\gamma (\kappa-1)}}2^{(n+sp)\frac{\kappa^2}{\gamma (\kappa-1)^2}}\left(\frac{1}{\sigma_0}\right)^{\frac{1}{\gamma}}=:C,
\]
where the constant $C$ depends on $n,s,p,c_0,c_1,\Lambda, \gamma$ and $\sigma_0$. Combining this with~\eqref{reverse eq.c} and applying H\"{o}lder's inequality to $Y_0$, we finally deduce that, for every $\sigma_0 \leq \sigma<1$ and $\gamma<q<\kappa(p-1)=\frac{n+p}{n}(p-1)$,
\[
\left(\fiint_{Q^-_{\sigma\rho}}u^{q}\dxt\right)^{\frac{1}{q}} \leq C\left(\frac{1+\rho^{(1-s)p}}{(\tau-\sigma)^{n+sp}}\right)^{ \frac{\kappa^2}{\gamma (\kappa-1)}}\left(\fiint_{Q^-_{\tau \rho}}u^{\gamma}\dxt\right)^{\frac{1}{\gamma}},
\]
as desired.
\end{proof}
\subsection{Log-type estimates}\label{Sect. 3.2}
To begin, we prove the log-type Caccioppoli estimate.
\begin{lem}[Log-type Caccioppoli estimate]\label{log-type Caccioppoli}
Let $0 \leq t_1<t_2 \leq T$ and suppose that $u$ is a weak supersolution to~\eqref{maineq} such that $u \geq m >0$ in $\bR^n \times (t_1,t_2)$. Then, there exists a positive constant $c\equiv c(p,c_0,c_1)$ such that, for every nonnegative $\varphi \in C^\infty_0(\Omega_{t_1,t_2})$, the following quantitative estimate holds true:
\begin{align*}
&\iint_{\Omega_{t_1,t_2}}\varphi^p\left|D(\log u)\right|^p\dxt-\left[\int_\Omega \varphi^p \log u\dx\right]_{t=t_1}^{t_2} \\[4mm]
&\leq c\iint_{\Omega_{t_1,t_2}}|D\varphi|^p\dxt+c\iint_{\Omega_{t_1,t_2}}\varphi^{p-1}|\varphi_t|\,|\log u|\dxt \\[4mm]
&\quad +c\int_{t_1}^{t_2}\iint_{\bR^n \times \bR^n}U(x,y,t)K(x,y,t)\left(u(x,t)^{1-p}\varphi(x,t)^p-u(y,t)^{1-p}\varphi(y,t)^p\right)\dxyt,
\end{align*}
where $\Omega_{t_1,t_2}:=\Omega \times (t_1,t_2)$. 
\end{lem}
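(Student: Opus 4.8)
\emph{Plan of proof.} This is the parabolic ``logarithmic lemma'' for the doubly nonlinear equation, and the plan is to test~\eqref{maineq} with (a time-regularisation of) the function $\phi=u^{1-p}\varphi^{p}$. Since $u>0$, this choice turns the parabolic term $\partial_{t}(|u|^{p-2}u)=\partial_{t}(u^{p-1})$ into a time-derivative of $\log u$, while the coercivity in~\eqref{S1} converts the divergence term into a term comparable to $\varphi^{p}|D\log u|^{p}$. Concretely, I would start from the mollified formulation~\eqref{D2'} (supersolution case), insert $\phi_{h}=\tfrac{1}{p-1}[u^{p-1}]_{h}^{-1}\varphi^{p}$, integrate the parabolic contribution by parts in time over $(t_{1},t_{2})$, estimate the other terms, and let $h\searrow0$. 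The function $\phi_{h}$ is admissible: the strict positivity $u\ge m>0$ on $\bR^{n}\times(t_{1},t_{2})$ forces $[u^{p-1}]_{h}$ to stay bounded away from zero on $\supp\varphi$, so $\phi_{h}\ge0$ belongs to $\mathcal{T}$; it is the natural regularisation of $\tfrac1{p-1}u^{1-p}\varphi^{p}$ precisely because $\partial_{t}[u^{p-1}]_{h}\cdot[u^{p-1}]_{h}^{-1}=\partial_{t}\log[u^{p-1}]_{h}$ exactly, by Lemma~\ref{mollification lemma}(ii).

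With this choice the parabolic term equals $\tfrac1{p-1}\iint_{\Omega_{t_{1},t_{2}}}\partial_{t}\big(\log[u^{p-1}]_{h}\big)\varphi^{p}\dxt$, and an integration by parts in $t$ turns it into $\tfrac1{p-1}\big[\int_{\Omega}\varphi^{p}\log[u^{p-1}]_{h}\dx\big]_{t_{1}}^{t_{2}}-\tfrac{p}{p-1}\iint\varphi^{p-1}\varphi_{t}\log[u^{p-1}]_{h}\dxt$; letting $h\searrow0$ (using $[u^{p-1}]_{h}\to u^{p-1}$, the time-continuity $u\in C([0,T];L^{p}(\Omega))$ recorded after Definition~\ref{weak sol}, and $u\ge m$ to control the logarithm at the endpoints) this converges to $\big[\int_{\Omega}\varphi^{p}\log u\dx\big]_{t_{1}}^{t_{2}}-p\iint\varphi^{p-1}\varphi_{t}\log u\dxt$. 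The divergence term converges to $\tfrac1{p-1}\iint\mathbf{A}(x,t,u,Du)\cdot D(u^{1-p}\varphi^{p})\dxt$; expanding $D(u^{1-p}\varphi^{p})=(1-p)u^{-p}\varphi^{p}Du+pu^{1-p}\varphi^{p-1}D\varphi$, the coercivity in~\eqref{S1} applied to the first piece yields the good term $-c_{0}\iint\varphi^{p}|D\log u|^{p}\dxt$, and the growth bound in~\eqref{S1} together with Young's inequality bounds the second piece by $\tfrac{c_{0}}{2}\iint\varphi^{p}|D\log u|^{p}\dxt+c(p,c_{0},c_{1})\iint|D\varphi|^{p}\dxt$. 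Finally the nonlocal term converges to $\tfrac1{p-1}\int_{t_{1}}^{t_{2}}\iint_{\bR^{n}\times\bR^{n}}U(x,y,t)K(x,y,t)\big(u(x,t)^{1-p}\varphi(x,t)^{p}-u(y,t)^{1-p}\varphi(y,t)^{p}\big)\dxyt$, which I would keep as it stands on the right-hand side; it is finite because $u^{1-p}\le m^{1-p}$ and $\varphi$ is a smooth compactly supported cut-off, so splitting into the region where both $x$ and $y$ lie near $\supp\varphi$ and the complementary tail region gives an integrable majorant.

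Collecting these limits in the supersolution inequality ``$\ge0$'' of~\eqref{D2'}, absorbing the $\tfrac{c_{0}}{2}\iint\varphi^{p}|D\log u|^{p}$ term into the good term, and dividing by the resulting structural constant (which only affects the constants $c$) gives
\[
\iint_{\Omega_{t_{1},t_{2}}}\varphi^{p}|D(\log u)|^{p}\dxt-\Big[\int_{\Omega}\varphi^{p}\log u\dx\Big]_{t_{1}}^{t_{2}}
\le c\iint|D\varphi|^{p}\dxt+c\iint\varphi^{p-1}|\varphi_{t}||\log u|\dxt+c\,(\text{nonlocal term}),
\]
which is the asserted estimate. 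The algebra in the divergence term is routine; the genuinely delicate part, and the place where the hypothesis $u\ge m>0$ is indispensable, is the rigorous treatment of the parabolic term and the limit $h\searrow0$ — verifying that $\phi_{h}\in\mathcal{T}$, that the time-mollification boundary artefacts (including the evaluations at $t_{1}$ and $t_{2}$) behave well, and that the limiting nonlocal double integral is finite so the estimate is not vacuous. Without the lower bound, $\log u$ and $[u^{p-1}]_{h}^{-1}$ would be uncontrolled and the whole scheme breaks down.
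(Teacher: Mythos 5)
Your overall strategy (test with a function comparable to $u^{1-p}\varphi^p$, use coercivity to extract $\varphi^p|D\log u|^p$, keep the nonlocal term as is, mollify in time and pass to the limit) is the right one and is the paper's strategy. But the specific test function you choose, $\phi_h=\tfrac{1}{p-1}[u^{p-1}]_h^{-1}\varphi^p$, introduces a genuine gap that the paper's argument is precisely designed to avoid.

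\emph{The gap: admissibility of $\phi_h$.} The mollified weak formulation~\eqref{D2'} requires test functions in $\mathcal{T}$, in particular $\phi_h\in L^p(0,T;W^{1,p}_0(\Omega))$. Computing
\[
D\phi_h=-\tfrac{1}{p-1}[u^{p-1}]_h^{-2}\,[Du^{p-1}]_h\,\varphi^p+\tfrac{p}{p-1}[u^{p-1}]_h^{-1}\varphi^{p-1}D\varphi,
\]
one sees that $\phi_h$ has an $L^p$ spatial gradient only if $u^{p-1}$ does, i.e. only if $u^{p-2}Du\in L^p$. For $p>2$ this is \emph{not} guaranteed: a supersolution in the sense of Definition~\ref{weak sol} has no a priori upper bound, so $u^{p-2}Du$ may fail even to be in $L^1_{\mathrm{loc}}$ and $u^{p-1}$ need not be weakly differentiable in $x$. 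The lower bound $u\geq m$ only makes $[u^{p-1}]_h^{-1}$ \emph{bounded}; it does nothing for $D[u^{p-1}]_h$. And because time-mollification is an averaging operation, $[u^{p-1}]_h^{-2}[Du^{p-1}]_h$ does not recombine pointwise into anything like $[u^{-p}Du]_h$, so the unbounded factor cannot be cancelled. This is exactly why the paper tests with the \emph{unmollified} $\chi_\varepsilon(t)u^{1-p}\varphi^p$: its spatial gradient is $(1-p)u^{-p}\varphi^pDu+pu^{1-p}\varphi^{p-1}D\varphi$, and here $u^{-p}\leq m^{-p}$, $u^{1-p}\leq m^{1-p}$ are bounded thanks to the hypothesis $u\geq m$, so admissibility holds for all $p>1$. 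The price the paper pays is the extra term $\mathbf{I}_1$ (coming from the difference $u^{1-p}-[u^{p-1}]_h^{-1}$), which Lemma~\ref{mollification lemma}(ii) and Lemma~\ref{Algs'} show to be nonpositive and hence disposable; your choice removes $\mathbf{I}_1$ at the cost of the admissibility problem above.

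\emph{A secondary point.} You say ``integrate the parabolic contribution by parts in time over $(t_1,t_2)$'' and expect boundary contributions $[\int_\Omega\varphi^p\log[u^{p-1}]_h\,\mathrm{d}x]_{t_1}^{t_2}$. But either $\varphi$ vanishes at $t_1,t_2$ — in which case those boundary terms vanish and you cannot recover the term $-[\int_\Omega\varphi^p\log u\,\mathrm{d}x]_{t=t_1}^{t_2}$ in the statement — or $\varphi$ does not vanish there, in which case its extension by zero outside $(t_1,t_2)$ is not in $\mathcal{T}$. The paper resolves this by inserting a separate Lipschitz cut-off $\chi_\varepsilon(t)$ that is $1$ on $[t_1+\varepsilon,t_2-\varepsilon]$ and linear near $t_1,t_2$: the averaged integrals $\dashint_{t_1}^{t_1+\varepsilon}$ and $\dashint_{t_2-\varepsilon}^{t_2}$, combined with $u\in C([0,T];L^p(\Omega))$ and Lemma~\ref{log-type Caccioppoli lemma}, produce the time-slice terms in the limit $\varepsilon\searrow0$ after $h\searrow0$. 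Your proposal should make this two-parameter limiting argument explicit.

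The treatment of the divergence term (coercivity plus Young's inequality) and the identification of the nonlocal limit are in line with the paper once the admissibility issue is repaired; in particular the convergence you need for the nonlocal integrand is exactly what Lemma~\ref{supsub lemma} / the argument of Appendix~\ref{Appendix A}, Step~2 give.
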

\begin{proof}
First of all,  for every  $0\leq t_1<t_2\leq T$ and $\varepsilon>0$ small enough, we define the following Lipschitz cut-off function:
 \begin{equation*}
\chi=\chi_\varepsilon(t):=
\begin{cases}
0, \quad &t \in [0, t_1),\\
\frac{1}{\varepsilon}(t-t_1), \quad &t \in [t_1, t_1+\varepsilon),\\
1, \quad &t \in [t_1+\varepsilon, t_2-\varepsilon],\\
-\frac{1}{\varepsilon}(t-t_2), \quad &t \in (t_2-\varepsilon, t_2], \\
0, \quad &t \in (t_2, T].
\end{cases}
\end{equation*}
Choosing the testing function in~\eqref{D2'} as
\begin{equation*}\label{cut-off}
\varphi(x,t)=\chi_\varepsilon(t)u(x,t)^{1-p}\varphi(x,t)^p\quad \textrm{with}\,\,\,0 \leq \varphi \in C^\infty_0(\Omega_T)
\end{equation*}
gives the estimates below respectively: The parabolic term is split into two terms
\begin{align*}
\iint_{\Omega_T}\partial_t[|u|^{p-2}u]_h\chi_\varepsilon u^{1-p}\varphi^p\dxt 
&=\iint_{\Omega_T}\chi_\varepsilon\varphi^p\partial_t[u^{p-1}]_h\left(u^{1-p}-[u^{p-1}]_h^{-1}\right)\dxt \\[2mm]
&\quad +\iint_{\Omega_T}\chi_\varepsilon\varphi^p\partial_t[u^{p-1}]_h[u^{p-1}]_h^{-1}\dxt\\[2mm]
&=:\mathbf{I}_1+\mathbf{I}_2.
\end{align*}
Apply Lemma~\ref{mollification lemma}-(ii), we have 
\begin{align*}
\mathbf{I}_1&=\iint_{\Omega_T}\chi_\varepsilon\varphi^p\frac{u^{p-1}-[u^{p-1}]_h}{h}\cdot \frac{[u^{p-1}]_h-u^{p-1}}{[u^{p-1}]_hu^{p-1}}\dxt  \\[3mm]
&=-\frac{1}{h}\iint_{\Omega_T}\chi_\varepsilon\varphi^p\frac{\left(u^{p-1}-[u^{p-1}]_h\right)^2}{[u^{p-1}]_hu^{p-1}}\dxt \leq 0.
\end{align*}
By integration by parts, we infer that
\begin{align*}
\mathbf{I}_2&=\iint_{\Omega_T}\chi_\varepsilon \varphi^p\partial_t\left(\log [u^{p-1}]_h\right)\dxt\\[3mm]
&=-\iint_{\Omega_T}\left(\chi_\varepsilon^\prime \varphi^p+p\varphi^{p-1}\varphi^\prime \chi_\varepsilon\right)\log [u^{p-1}]_h\dxt \\[3mm]
&=-\dashint_{t_1}^{t_1+\varepsilon} \int_\Omega \varphi^p\log[u^{p-1}]_h\dxt+\dashint_{t_2-\varepsilon}^{t_2} \int_\Omega \varphi^p\log[u^{p-1}]_h\dxt \\[3mm]
&\quad \quad \quad \quad \quad -p\iint_{\Omega_{t_1,t_2}}\varphi^{p-1}\varphi^\prime \chi_\varepsilon \log[u^{p-1}]_h\dxt.
\end{align*}
At this stage, we claim the following result. The proof is postponed and will be presented in Appendix~\ref{Appendix B}.
\begin{lem}\label{log-type Caccioppoli lemma}
As $h\searrow 0$
\[
\log[u^{p-1}]_h \to \log u^{p-1} \quad \textrm{in}\,\,\,L^{\frac{p}{p-1}}(\Omega_{t_1,t_2}).
\]
\end{lem}

Thanks to this lemma and the preceding estimates, we infer that
\begin{align}\label{log-type Caccioppoli eq.1}
&\limsup_{h\searrow 0}\iint_{\Omega_T}\partial_t[|u|^{p-2}u]_h\chi_\varepsilon u^{1-p}\varphi^p\dxt\notag\\[3mm]
&\leq \limsup_{h\ \searrow 0}\left(\mathbf{I}_1+\mathbf{I}_2\right) \notag\\[3mm]
&\leq -\dashint_{t_1}^{t_1+\varepsilon} \int_\Omega \varphi^p\log u^{p-1}\dxt+\dashint_{t_2-\varepsilon}^{t_2} \int_\Omega \varphi^p\log u^{p-1} \dxt \notag\\[3mm]
&\quad \quad \quad \quad \quad -p\iint_{\Omega_{t_1,t_2}}\varphi^{p-1}\varphi^\prime \chi_\varepsilon \log u^{p-1} \dxt \notag\\[3mm]
&=-(p-1)\dashint_{t_1}^{t_1+\varepsilon} \int_\Omega \varphi^p\log u\dxt+(p-1)\dashint_{t_2-\varepsilon}^{t_2} \int_\Omega \varphi^p\log u \dxt \notag\\[3mm]
&\quad \quad \quad \quad \quad -p(p-1)\iint_{\Omega_{t_1,t_2}}\varphi^{p-1}\varphi^\prime \chi_\varepsilon \log u \dxt.
\end{align}
On the other hand, since by Lemma~\ref{mollification lemma}-(ii) $\left[\mathbf{A}(x,t,u,Du)\right]_h \to  \mathbf{A}(x,t,u,Du)$ in $L^{\frac{p}{p-1}}(\Omega_T)$ as $h \searrow 0$, we observe that
\begin{align*}
&\lim_{h\searrow 0}\iint_{\Omega_T}\left[\mathbf{A}(x,t,u,Du)\right]_h\cdot D\left(\chi_\varepsilon u^{1-p}\varphi^p\right)\dxt \\[3mm]
&=\iint_{\Omega_T}\mathbf{A}(x,t,u,Du)\cdot D\left(\chi_\varepsilon u^{1-p}\varphi^p\right)\dxt \\[3mm]
&=\iint_{\Omega_T}\chi_\varepsilon\mathbf{A}(x,t,u,Du)\cdot \left(-(p-1)u^{-p}Du\varphi^p+p\varphi^{p-1}D\varphi u^{1-p}\right)\dxt \\[3mm]
&\leq p\iint_{\Omega_T}\Big(\chi_\varepsilon^{\frac{1}{p}}\varphi|D (\log u)|\Big)^{p-1}\Big(c_1\chi_\varepsilon^{\frac{1}{p}}|D\varphi|\Big)\dxt  -c_0(p-1)\iint_{\Omega_T}\chi_\varepsilon\varphi^p|D(\log u)|^p\dxt,
\end{align*}
which together with Young's inequality with conjugate exponents $\left(\frac{p}{p-1}, p\right)$ now yields,
\begin{align}\label{log-type Caccioppoli eq.2}
&\lim_{h\searrow 0}\iint_{\Omega_T}\left[\mathbf{A}(s,t,u,Du)\right]_h\cdot D\left(\chi_\varepsilon u^{1-p}\varphi^p\right)\dxt \notag\\[3mm]
&\leq -\frac{p-1}{2}c_0\iint_{\Omega_{t_1,t_2}}\chi_\varepsilon |D(\log u)|^p\dxt+c(p,c_0,c_1)\iint_{\Omega_{t_1,t_2}}\chi_\varepsilon |D\varphi|^p\dxt.
\end{align}
As argued in Step 2 in Appendix~\ref{Appendix A}, we obtain
\begin{align}\label{log-type Caccioppoli eq.3}
&\lim_{h \searrow 0}\int_0^T\iint_{\bR^n \times \bR^n}\left[U(x,y,t)K(x,y,t)\right]_h\chi_\varepsilon(t) \left(u(x,t)^{1-p}\varphi(x,t)^p-u(y,t)^{1-p}\varphi(y,t)^p\right)\dxyt \notag\\[4mm]
&=\int_{t_1}^{t_2}\iint_{\bR^n \times \bR^n}U(x,y,t)K(x,y,t)\chi_\varepsilon(t) \left(u(x,t)^{1-p}\varphi(x,t)^p-u(y,t)^{1-p}\varphi(y,t)^p\right)\dxyt
\end{align}
and
\begin{equation}\label{log-type Caccioppoli eq.4}
\lim_{h\searrow 0}\int_\Omega|u|^{p-2}u(0)\left(\frac{1}{h}\int_0^Te^{-\frac{s}{h}}\chi_\varepsilon(s)u(x,s)^{1-p}\varphi(x,s)^p\ds\right)\dx=0.
\end{equation}
Therefore, collecting the preceding estimates~\eqref{log-type Caccioppoli eq.1}--\eqref{log-type Caccioppoli eq.4} and subsequently, passing to the limit  $\varepsilon\searrow 0$, we finally gain
\begin{align*}
&(p-1)\left[\int_\Omega \varphi^p\log u\dx\right]_{t=t_1}^{t_2}-p(p-1)\iint_{\Omega_{t_1,t_2}}\varphi^{p-1}\varphi^\prime \log u\dxt\\[3mm]
&- \frac{p-1}{2}c_0\iint_{\Omega_{t_1,t_2}}|D(\log u)|^p\dxt+c(p,c_0,c_1)\iint_{\Omega_{t_1,t_2}}|D\varphi|^p\dxt\\[3mm]
&+\int_{t_1}^{t_2}\iint_{\bR^n \times \bR^n}U(x,y,t)K(x,y,t) \left(u(x,t)^{1-p}\varphi(x,t)^p-u(y,t)^{1-p}\varphi(y,t)^p\right)\dxyt \geq 0,
\end{align*}
which in turn implies the desired inequality.
\end{proof}
We next deduce the log-type estimate for supersolutions.
\begin{prop}\label{log-type est.}
Assume that $u$ is a weak supersolution to~\eqref{maineq} satisfying $u \geq m>0$ in $\bR^n \times (t_0-\rho^p,t_0+\rho^p)$. Given $\sigma \in (0,1)$, let $\chi=\chi(x)\in C^\infty(\bR^n)$ be a smooth radial symmetric function such that
\[
\begin{cases}
0 \leq \chi \leq 1\quad \textrm{in}\,\,\,\bR^n,\quad \supp\,\chi\subset B_{\frac{1+\sigma}{2}\rho}(x_0); \\[2mm]
\chi \equiv 1\quad \textrm{on}\,\,\,B_{\sigma \rho}(x_0),\quad |D\chi|\leq \dfrac{c}{(1+\sigma)\rho}.
\end{cases}
\]
Set
\[
\beta:=\int_{B_\rho(x_0)}\chi^p(x)\log u(x,t_0)\dx.
\]
Then, there exist positive constant $\overline{C}=\overline{C}(n,s,p,\sigma)$ such that
\begin{align*}
\Big|Q_{\sigma \rho}^-(z_0) \cap \big\{\log u>\lambda+\beta\big\}\Big| &\leq \frac{\overline{C}}{\lambda^{p-1}}\left|Q_{\sigma \rho}^-(z_0)\right|
\intertext{and}
\Big|Q_{\sigma \rho}^+(z_0) \cap \big\{\log u<-\lambda+\beta\big\}\Big| &\leq \frac{\overline{C}}{\lambda^{p-1}}\left|Q_{\sigma \rho}^+(z_0)\right|
\end{align*}
\end{prop}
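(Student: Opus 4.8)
The plan is to derive both measure estimates from the log-type Caccioppoli inequality (Lemma~\ref{log-type Caccioppoli}) by choosing the time-independent cut-off $\varphi(x,t)=\chi(x)$ and then analyzing the function $v(x,t):=\log u(x,t)$. First I would record the consequence of Lemma~\ref{log-type Caccioppoli} with this choice: the term $\iint \varphi^{p-1}|\varphi_t|\,|\log u|$ vanishes, so on any time interval $(t_1,t_2)\subset(t_0-\rho^p,t_0+\rho^p)$ one gets
\[
\left[\int_{B_\rho}\chi^p\log u\dx\right]_{t=t_1}^{t_2}\leq c\iint_{\Omega_{t_1,t_2}}|D\chi|^p\dxt+c\int_{t_1}^{t_2}\iint_{\bR^n\times\bR^n}U(x,y,t)K(x,y,t)\big(u(x,t)^{1-p}\chi(x)^p-u(y,t)^{1-p}\chi(y)^p\big)\dxyt.
\]
The key point is that the right-hand side is bounded by a constant times $\rho^n$ times the length $t_2-t_1$, uniformly. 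For the local term this is immediate from $|D\chi|\leq c/((1+\sigma)\rho)$ and $\rho\leq 1$. The nonlocal term requires the standard splitting into the region where both $x,y\in B_{\frac{1+\sigma}{2}\rho}$ and the tail region $y\in\bR^n\setminus B_{\frac{1+\sigma}{2}\rho}$ (and vice versa); on the diagonal region one uses $|U(x,y,t)|\le(u(x,t)^{p-1}+u(y,t)^{p-1})$ together with $u\ge m$ and an algebraic estimate (Lemma~\ref{Algs} or Lemma~\ref{fractional est.}) to absorb, while on the tail region one bounds $\chi$ crudely by $1$ and integrates $|x-y|^{-n-sp}$ over the complement of a ball, picking up a factor $\rho^{-sp}$ which combines with the $\rho^{sp}$-type scaling. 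In all cases the constant depends only on $n,s,p$ and $\sigma$ (through the separation of the supports).

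Next I would introduce $\bar v(t):=\int_{B_\rho}\chi^p(x)\big(\log u(x,t)-\beta\big)\dx$, which satisfies $\bar v(t_0)=0$ by the definition of $\beta$, and show from the above that $t\mapsto\bar v(t)+Ct$ is monotone (here the sign matters: the inequality gives control of the increments of $\int\chi^p\log u$ from below/above depending on orientation). The classical Moser trick then applies. For the first estimate, working on the cylinder $Q^-_{\sigma\rho}(z_0)$, I would fix $\lambda>0$, and for each time $t\in(t_0-(\sigma\rho)^p,t_0)$ consider the set $B_{\sigma\rho}\cap\{\log u(\cdot,t)>\lambda+\beta\}$; combining $\bar v(t)\le C(t_0-t)\le C\rho^p\le C$ (using $\rho\le 1$) with a Poincaré-type lower bound $\bar v(t)\gtrsim -c+ c\,\lambda\,\frac{|B_{\sigma\rho}\cap\{v(\cdot,t)>\lambda+\beta\}|}{|B_\rho|}$ — valid because $\chi^p\equiv1$ on $B_{\sigma\rho}$ and $\log u$ has a fixed $L^p$-bound on its gradient coming again from Lemma~\ref{log-type Caccioppoli} — I would obtain $|B_{\sigma\rho}\cap\{v(\cdot,t)>\lambda+\beta\}|\le \tfrac{c}{\lambda}|B_{\sigma\rho}|$ for each such $t$; integrating in $t$ over the interval of length $(\sigma\rho)^p$ gives the result with exponent $1$ in $\lambda$, and an iteration/boot-strapping on the exponent (replacing $\lambda$ by $\lambda/2$ and using the one-sided monotonicity repeatedly, as in the standard argument for the weak Harnack log-lemma) upgrades the power to $p-1$. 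The second estimate is entirely symmetric, carried out on $Q^+_{\sigma\rho}(z_0)$ with $-\log u$ in place of $\log u$ and the reversed time orientation.

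The main obstacle I anticipate is \emph{handling the nonlocal term cleanly} so that the final constant has no hidden dependence on $u$ or on $\rho$ beyond the allowed scaling. Concretely, after testing with $\chi^p u^{1-p}$, the integrand $U(x,y,t)\big(u(x,t)^{1-p}\chi(x)^p-u(y,t)^{1-p}\chi(y)^p\big)$ is not sign-definite; one must symmetrize in $x\leftrightarrow y$ and split it as a ``good'' part controlled by $\big(u(x,t)^{\alpha}+u(y,t)^{\alpha}\big)|\chi(x)-\chi(y)|^p/|x-y|^{n+sp}$ (with $\alpha$ related to $p-1$) plus a tail contribution, exactly in the spirit of Lemma~\ref{fractional est.} and the computations already carried out in the proof of Lemma~\ref{reverse lemma}. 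The delicate accounting is that $u^{1-p}$ can be small but is bounded by $m^{1-p}$, while in the good part one wants a bound independent of $m$ — this forces a careful choice of which algebraic inequality to invoke and in which region, together with using $\chi\equiv1$ on $B_{\sigma\rho}$ so that the ``bad'' contributions are localized to the annulus $B_{\frac{1+\sigma}{2}\rho}\setminus B_{\sigma\rho}$ where $|D\chi|$ is controlled. Once this bound is in place, the rest is the routine Moser measure-theoretic argument and poses no real difficulty.
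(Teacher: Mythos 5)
Your overall scaffolding is right: test the log-Caccioppoli estimate (Lemma~\ref{log-type Caccioppoli}) with the time-independent cut-off $\varphi=\chi$, invoke a weighted Poincar\'e inequality for the gradient term on the left, split the nonlocal contribution into a near part (controlled, after a Lemma~\ref{Gamma}-type algebraic estimate with a $u$-dependent choice of $\varepsilon$, plus the crucial technical Lemma~\ref{tech.}) and a tail part (where the pointwise lower bound $u\geq m$ and $\chi\equiv 0$ off $B_{\frac{1+\sigma}{2}\rho}$ kill everything), and then define an auxiliary monotone quantity $W(t)$ with $W(t_0)=0$.

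Where the proposal has a genuine gap is the final measure estimate. You propose to get a \emph{pointwise-in-$t$} bound $|B_{\sigma\rho}\cap\{\log u(\cdot,t)>\lambda+\beta\}|\lesssim\lambda^{-1}|B_{\sigma\rho}|$ and then ``iterate/boot-strap'' to upgrade the exponent to $p-1$. Neither step works as stated. First, the $L^p$-gradient bound for $\log u$ coming from Lemma~\ref{log-type Caccioppoli} is only an integral over the full space-time cylinder; it does not yield a uniform-in-$t$ energy bound, so there is no legitimate pointwise-in-$t$ Poincar\'e argument. Second, there is no bootstrap that converts $\lambda^{-1}$ into $\lambda^{-(p-1)}$; the exponent $p-1$ does not appear from iteration at all. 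What actually happens in the paper is that, after shrinking $t_2\searrow t_1=t$, one arrives at the \emph{a.e.\ pointwise-in-$t$ differential inequality}
\[
\frac{1}{c\,\rho^p|B_\rho|}\int_{B_{\sigma\rho}}|w(x,t)-W(t)|^p\,\mathrm{d}x + W'(t)\leq 0 .
\]
Using $\int_{B_{\sigma\rho}}|w-W|^p\geq(\lambda-W(t))^p\,|\Sigma_\lambda^-(t)|$ (valid since $W<0=W(t_0)$ for earlier times) and dividing by $(\lambda-W(t))^p$ gives
\[
\frac{|\Sigma_\lambda^-(t)|}{c\,\rho^p|B_\rho|}\leq -\frac{W'(t)}{(\lambda-W(t))^p},
\]
and the power $p-1$ falls out in one shot upon integrating in $t$, because the antiderivative of the right-hand side is $\dfrac{1}{p-1}(\lambda-W(t))^{1-p}$, which is bounded by $\dfrac{\lambda^{1-p}}{p-1}$ since $W(t_0)=0$. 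That nonlinear ODE integration, not an iteration in $\lambda$, is the step you are missing; without it the proposal stalls at the weaker exponent $1$.
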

\medskip

Before proving Proposition~\ref{log-type est.}, we need the following technical lemma.
\begin{lem}\label{tech.}
With $p>1$ let $g(\tau):=\dfrac{1-\tau^{1-p}}{1-\tau}$ for  $\tau \in (0,1)$ and $A>0$. Then, 
\[
(1-\tau)^p\left(g(\tau)+A\right) \leq \left(A-(p-1)\right)\left[\log \left(\frac{1}{\tau}\right)\right]^p\quad \forall \tau \in (0,1) 
\]
holds true.
\end{lem}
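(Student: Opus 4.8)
The plan is to reduce the claimed inequality to an elementary one-variable calculus statement and then verify it by examining the behaviour of an auxiliary function near the endpoints $\tau\to 1^-$ and $\tau\to 0^+$. First I would substitute $t:=\log(1/\tau)\in(0,\infty)$, equivalently $\tau=e^{-t}$, so that $1-\tau=1-e^{-t}$ and $\tau^{1-p}=e^{(p-1)t}$. Writing out $g(\tau)=\frac{1-e^{(p-1)t}}{1-e^{-t}}$, the inequality to be proven becomes
\[
(1-e^{-t})^p\left(\frac{1-e^{(p-1)t}}{1-e^{-t}}+A\right)\le \big(A-(p-1)\big)\,t^p,
\]
i.e.
\[
(1-e^{-t})^{p-1}\big(1-e^{(p-1)t}\big)+A\,(1-e^{-t})^p\le \big(A-(p-1)\big)\,t^p.
\]
Note that the left-hand side has a negative first term (since $e^{(p-1)t}>1$) and a positive second term, while on the right $A-(p-1)$ may or may not be positive; the inequality is only interesting, and presumably only used, when $A>p-1$, and I would assume that (it is consistent with how $A$ enters Lemma~\ref{tech.}, and with the factor $A-(p-1)$ appearing on the right).

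The key step is the estimate $1-e^{-t}\le t$, valid for all $t>0$, which lets me bound the positive term: $A\,(1-e^{-t})^p\le A\,t^p$. Thus it suffices to show
\[
(1-e^{-t})^{p-1}\big(1-e^{(p-1)t}\big)\le -(p-1)\,t^p,
\]
equivalently, after multiplying by $-1$,
\[
(1-e^{-t})^{p-1}\big(e^{(p-1)t}-1\big)\ge (p-1)\,t^p.
\]
Now I would use the two one-sided bounds $e^{(p-1)t}-1\ge (p-1)t$ (convexity of the exponential, valid for all $t$ and all $p>1$) and $1-e^{-t}\ge$ something comparable to $t$. The naive bound $1-e^{-t}\le t$ goes the wrong way here, so instead I would handle the factor $(1-e^{-t})^{p-1}$ more carefully: combining $e^{(p-1)t}-1\ge (p-1)t$ with $(1-e^{-t})^{p-1}\ge$ a suitable lower bound. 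A cleaner route: factor $e^{(p-1)t}-1=(e^t-1)\sum_{j=0}^{p-2}e^{jt}$ when $p$ is an integer — but $p$ need not be an integer, so I would instead write $e^{(p-1)t}-1\ge e^{(p-1)t}(1-e^{-(p-1)t})\ge e^{(p-1)t}\cdot\frac{(p-1)t}{1+(p-1)t}\cdots$, which gets messy. The tidiest approach I expect to work is to define $F(t):=(1-e^{-t})^{p-1}(e^{(p-1)t}-1)-(p-1)t^p$, show $F(0)=0$, and prove $F'(t)\ge 0$ for $t>0$; the derivative $F'(t)=(p-1)(1-e^{-t})^{p-2}e^{-t}(e^{(p-1)t}-1)+(p-1)(1-e^{-t})^{p-1}e^{(p-1)t}-p(p-1)t^{p-1}$ can be compared termwise to $p(p-1)t^{p-1}$ using $1-e^{-t}\ge t e^{-t}$ (i.e. $e^t-1\ge t$) and $e^{(p-1)t}-1\ge (p-1)t$, after which each occurrence of $t$ is matched by an explicit factor.

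The main obstacle I anticipate is precisely this last monotonicity/derivative computation: the exponents $p-1$ and $p-2$ need not be integers, so I cannot expand, and I must keep track of whether the elementary bounds $e^t-1\ge t$, $1-e^{-t}\ge te^{-t}$, $e^{(p-1)t}-1\ge(p-1)t$ are strong enough in the right direction — there is a real risk that a crude termwise comparison loses too much for $t$ large or for $p$ close to $1$. If the global derivative argument proves too delicate, the fallback is to split into $t\in(0,1]$ and $t\in(1,\infty)$: on $(0,1]$ use Taylor expansions to control everything to leading order in $t$, and on $(1,\infty)$ use that $1-e^{-t}$ is bounded below by $1-e^{-1}$ and that $t^p$ grows only polynomially while, after dividing, the inequality becomes slack. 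I would also double-check the edge behaviour as $\tau\to1^-$ (where both sides vanish, and one should verify the left side vanishes at least as fast) and as $\tau\to0^+$ (where the right side $\sim A t^p$ and the left side $\sim A t^p$ to leading order as well, so the constant $A-(p-1)$ on the right must be doing real work in a lower-order term — this is a useful sanity check that the inequality is sharp and that the $-(p-1)$ cannot be dropped).
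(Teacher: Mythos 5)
Your reduction is sound: after the substitution $t=\log(1/\tau)$ and the use of $1-e^{-t}\le t$ (which needs only $A>0$, not $A>p-1$; the extra assumption you flagged is unnecessary and not made in the paper), the lemma does reduce to
\[
(1-e^{-t})^{p-1}\bigl(e^{(p-1)t}-1\bigr)\;\ge\;(p-1)\,t^p ,\qquad t>0,
\]
and this is precisely equivalent to the inequality $(1-\tau)^p g(\tau)\le -(p-1)[\log(1/\tau)]^p$ that the paper establishes. However, you never actually prove this key step, and the route you sketch does not close. The paper's argument is a one-line application of the power-mean (Jensen/H\"older) inequality to the average $\dashint_\tau^1\rho^{-1}\,d\rho$: writing $g(\tau)=-(p-1)\dashint_\tau^1\rho^{-p}\,d\rho$, one has $\dashint_\tau^1\rho^{-1}\,d\rho\le\bigl(\dashint_\tau^1\rho^{-p}\,d\rho\bigr)^{1/p}$, which after raising to the $p$-th power and multiplying by $-(p-1)(1-\tau)^p$ gives exactly the target; the second ingredient $(1-\tau)^p\le[\log(1/\tau)]^p$ matches your $1-e^{-t}\le t$.

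The gap in your derivative argument is concrete. Setting $F(t)=(1-e^{-t})^{p-1}(e^{(p-1)t}-1)-(p-1)t^p$ and bounding $F'(t)$ from below by inserting $1-e^{-t}\ge te^{-t}$ into the factors $(1-e^{-t})^{p-2}$ and $(1-e^{-t})^{p-1}$ leads (after simplification) to the requirement $1-e^{-(p-1)t}\ge(p-1)t$, which is false for every $t>0$. Worse, for $1<p<2$ the exponent $p-2$ is negative, so the substitution $1-e^{-t}\ge te^{-t}$ reverses direction in that factor and cannot be used at all. So the ``termwise'' comparison you anticipated does not hand you the result; you would have to make the fallback $t\in(0,1]$ versus $t>1$ split precise, and even there the near-origin case needs a genuine second-order Taylor comparison, not just crude polynomial bounds. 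Finally, your endpoint sanity check at $\tau\to0^+$ is mislabelled: there $g(\tau)\sim-\tau^{1-p}$ blows up exponentially in $t$, so the left side is not $\sim At^p$; the balance you describe (both sides $\sim(A-(p-1))t^p$) occurs at the other endpoint $\tau\to1^-$, i.e.\ $t\to0^+$. In short, the decomposition is the right one, but the paper closes it with H\"older while your proposal leaves the central inequality unproven and the specific closing strategy you float would need to be replaced.
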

\begin{proof}
Rearranging $g(\tau)$ as
\[
g(\tau)=-\frac{p-1}{1-\tau}\int_\tau^1\rho^{-p}\d\rho=-(p-1)\dashint_\tau^1\rho^{-p}\d\rho.
\]
By H\"{o}lder's inequality, we infer that
\[
\frac{1}{1-\tau}\log\left(\frac{1}{\tau}\right)=\dashint_\tau^1\rho^{-1}\d\rho \leq \left(\,\dashint_\tau^1\rho^{-p}\d\rho\right)^{\frac{1}{p}}
\]
and therefore, 
\begin{equation*}
(1-\tau)^pg(\tau) \leq -(p-1)\left[\log \left(\frac{1}{\tau}\right)\right]^p.
\end{equation*}
Since $1 \leq 1/\rho$ for any $\rho \in (\tau,1)$,
\begin{equation*}
(1-\tau)^p\leq \left(\int_\tau^1\frac{1}{\rho}\d\rho\right)^p=\left[\log \left(\frac{1}{\tau}\right)\right]^p.
\end{equation*}
Combining these displays, the result in turn follows.
\end{proof}
\begin{proof}[\normalfont\textbf{Proof of Proposition~\ref{log-type est.}}]
Following the argument in the literatures~\cite[Lemma 6.1]{Kinnunen-Kuusi} and~\cite[Lemma 1.3]{DiCKP2}, we prove the claim. 

\emph{Step 1}. In the first step, we derive the preliminarily result. For this, let us define
\[
V(t):=\frac{1}{N}\int_{B_\rho(x_0)}\left(\log u(x,t)-\beta\right)\chi(x)^p\dx,\quad N:=\int_{B_\rho(x_0)}\chi(x)^p\dx
\]
with $\displaystyle \beta=\int_{B_\rho(x_0)}\chi(x)^p\log u(x,t_0)\dx$. By definition, 
\[
V(t_0)=0,\quad \left|B_{\sigma \rho}\right|\leq N \leq \left|B_\rho\right|.
\]
Applying Lemma~\ref{log-type Caccioppoli} to $\varphi(x,t)=\chi(x)$ yields that
\begin{align*}
&\int_{t_1}^{t_2}\int_{B_\rho(x_0)}\chi^p\left|D(\log u)\right|^p\dxt-\left[\int_{B_\rho(x_0)} \chi^p \log u\dx\right]_{t=t_1}^{t_2} \\[4mm]
&\quad  \leq c\int_{t_1}^{t_2}\int_{B_\rho(x_0)}|D\chi|^p\dxt\\[4mm]
&\quad \quad +c\int_{t_1}^{t_2}\iint_{\bR^n \times \bR^n}U(x,y,t)K(x,y,t)\left(u(x,t)^{1-p}\chi(x,t)^p-u(y,t)^{1-p}\chi(y,t)^p\right)\dxyt
\end{align*}
whenever $t_0-(\sigma \rho)^p\leq t_1<t_2\leq t_0+(\sigma \rho)^p$.
In view of the weighted Poincar\'{e} inequality, it holds that
\begin{align*}
\int_{B_\rho(x_0)}\left|D\left(\log u-\beta \right)\right|^p\chi^p(x)\dx &\geq \frac{1}{c\rho^p}\int_{B_\rho(x_0)}\left|\log u-\beta-V(t)\right|^p\chi^p(x)\dx\\[4mm]
&\geq \frac{1}{c\rho^p}\int_{B_{\sigma\rho}(x_0)}\left|\log u-\beta-V(t)\right|^p\dx
\end{align*}
and therefore we get, for every $t_0-(\sigma \rho)^p\leq t_1<t_2\leq t_0+(\sigma \rho)^p$,
\begin{align}\label{log-type est. eq.1}
&\frac{1}{c\rho^pN}\int_{t_1}^{t_2}\int_{B_\rho(x_0)}\left|\log u-\beta-V(t)\right|^p\dx+\Big[V(t)\Big]_{t_1}^{t_2}\notag\\[4mm]
&\quad  \leq \frac{c}{N}\int_{t_1}^{t_2}\int_{B_\rho(x_0)}|D\chi|^p\dxt \notag\\[4mm]
&\quad \quad +\frac{c}{N}\int_{t_1}^{t_2}\iint_{\bR^n \times \bR^n}U(x,y,t)K(x,y,t)\left(u(x,t)^{1-p}\chi(x,t)^p-u(y,t)^{1-p}\chi(y,t)^p\right)\dxyt \notag \\[4mm]
&\quad=:\mathbf{I}+\mathbf{II},
\end{align}
where the definition of $\mathbf{I}$ and $\mathbf{II}$ are obvious from the context.

By definition, $\mathbf{I}$ is estimated as
\[
\mathbf{I} \leq \frac{c|B_\rho(x_0)|}{(1+\sigma)^p\rho^p}\frac{t_2-t_1}{N}.
\]
The fractional integral term $\mathbf{II}$ is split into two terms:
\begin{align}\label{log-type est. eq.2}
\mathbf{II}&=\frac{c}{N}\left(\int_{t_1}^{t_2}\iint_{B_\rho \times B_\rho}(\cdots)\dxyt+2\int_{t_1}^{t_2}\iint_{B_\rho \times \left(\bR^n \setminus B_\rho\right)}(\cdots)\dxyt\right) \notag\\[2mm]
&=:\frac{c}{N}\big(\mathbf{II}_1+2\mathbf{II}_2\big).
\end{align}
In order to estimate $\mathbf{II}_1$, we distinguish between the two cases $u(x,t)>u(y,t)$ and $u(x,t) \leq u(y,t)$. In the first case, applying Lemma~\ref{Gamma} with $a=\chi(x)$, $b=\chi(y)$ and
\[
\varepsilon=\delta\,\frac{u(x,t)-u(y,t)}{u(x,t)} \in (0,1)\quad \textrm{with}\quad \delta \in (0,1),
\]
the integrand of $\mathbf{II}_1$ is estimated as
\begin{align*}
&U(x,y,t)K(x,y,t)\left(\frac{\chi(x)^p}{u(x,t)^p}-\frac{\chi(y)^p}{u(y,t)^p}\right) \\[3mm]
&=K(x,y,t)\Big(u(x,t)-u(y,t)\Big)^{p-1}\left(\frac{\chi(x)^p}{u(x,t)^p}-\frac{\chi(y)^p}{u(y,t)^p}\right) \\[3mm]
&\leq K(x,y,t)\left(\frac{u(x,t)-u(y,t)}{u(x,t)}\right)^{p-1}\chi(y)^p\left[1+c\delta\,\frac{u(x,t)-u(y,t)}{u(x,t)} -\left(\frac{u(x,t)}{u(y,t)}\right)^{p-1}\right] \\[3mm]
&\quad \quad \quad \quad \quad \quad+c\delta^{1-p} K(x,y,t)\big|\chi(x)-\chi(y)\big|^p \\[3mm]
&=K(x,y,t)\left(\frac{u(x,t)-u(y,t)}{u(x,t)}\right)^{p}\left[g\left(\frac{u(y,t)}{u(x,t)}\right)+c\delta \right]+c\delta^{1-p} K(x,y,t)\big|\chi(x)-\chi(y)\big|^p,
\end{align*} 
where in the last line we let $g(\tau):=\dfrac{1-\tau^{1-p}}{1-\tau}$ for $\tau \in (0,1)$. Therefore, Lemma~\ref{tech.} applied with $\tau\equiv \dfrac{u(y,t)}{u(x,t)}$ and $A\equiv c\delta$ and choosing $\delta\equiv \dfrac{p-1}{2^pc}$ implies that
\begin{align*}
&U(x,y,t)K(x,y,t)\left(\frac{\chi(x)^p}{u(x,t)^p}-\frac{\chi(y)^p}{u(y,t)^p}\right) \\[3mm] 
&\leq -\frac{(p-1)(2^p-1)}{2^p}K(x,y,t)\left[\log\left(\frac{u(x,t)}{u(y,t)}\right)\right]^p\chi(y)^p+c\left(\frac{p-1}{2^pc}\right)^{1-p}K(x,y,t)\big|\chi(x)-\chi(y)\big|^p.
\end{align*}
In the case $u(x,t)=u(y,t)$, this estimate trivially holds and, by the interchanging the role of $u(x,t)$ and $u(y,t)$, we also recover this estimate in the case $u(x,t)<u(y,t)$.
Therefore, we finally deduce that
\begin{align}\label{log-type est. eq.3}
\mathbf{II}_1&\leq -c\int_{t_1}^{t_2}\iint_{B_\rho\times B_\rho}K(x,y,t)\left|\log\left(\frac{u(x,t)}{u(y,t)}\right)\right|^p\chi(y)^p\dxyt \notag\\[4mm]
&\quad \quad +c\int_{t_1}^{t_2}\iint_{B_\rho\times B_\rho}K(x,y,t)\big|\chi(x)-\chi(y)\big|^p\dxyt \notag\\[4mm]
&\leq -c\int_{t_1}^{t_2}\iint_{B_\rho\times B_\rho}K(x,y,t)\left|\log\left(\frac{u(x,t)}{u(y,t)}\right)\right|^p\chi(y)^p\dxyt \notag\\[4mm]
&\quad \quad +\frac{c\Lambda}{(1+\sigma)^p\rho^p}(t_2-t_1)|B_\rho|\sup_{x \in B_\rho}\left(\int_{B_\rho}\frac{\dy}{|x-y|^{n-(1-s)p}}\right) \notag\\[4mm]
&\leq \frac{c}{p(1-s)}\frac{\rho^{n-sp}}{(1+\sigma)^p}(t_2-t_1)\end{align}
for a constant $c=c(n,s,p,\Lambda)$, where we again used that 
\[
 \int_{B_\rho}\frac{\dy}{|x-y|^{n-(1-s)p}}=|B_\rho|\frac{\rho^{(1-s)p}}{(1-s)p}.
 \]

Next, recalling the fact that $\supp\,\chi \in B_{\frac{1+\sigma}{2}}(x_0)$ and
\[
\frac{(u(x,t)-u(y,t))_+^{p-1}}{u(x,t)^{p-1}}\leq 1\qquad \forall (x,y,t) \in B_{\frac{1+\sigma}{2}}(x_0)\times (\bR^n\setminus B_\rho(x_0))\times (t_1,t_2),
\]
we infer that
\begin{align*}
\mathbf{II}_2&=2\iint_{B_{\frac{1+\sigma}{2}\rho}(x_0)\times (\bR^n\setminus B_\rho(x_0))}U(x,y,t)K(x,y,t)u(x,t)^{1-p}\chi(x)^p\dxyt \\[4mm]
&\leq 2\iint_{B_{\frac{1+\sigma}{2}\rho}(x_0)\times (\bR^n\setminus B_\rho(x_0))}(u(x,t)-u(y,t))_+^{p-1}K(x,y,t)u(x,t)^{1-p}\dxyt \\[4mm]
&\leq 2\Lambda|B_{\frac{1+\sigma}{2}\rho}(x_0)|(t_2-t_1)\sup_{x\in B_{\frac{1+\sigma}{2}\rho}(x_0)}\int_{\bR^n \setminus B_\rho(x_0)}\frac{\dy}{|x-y|^{n+sp}}.
\end{align*}
Similarly as before, since
\[
\frac{|y-x_0|}{|y-x|}\leq \frac{|y-x|+|x-x_0|}{|y-x|}=1+\frac{|x-x_0|}{|y-x|}
\]
and
\[
|y-x| \geq |y-x_0|-|x_0-x| \geq \rho-\frac{1+\sigma}{2}\rho=\frac{1-\sigma}{2}\rho
\]
holds again for any $x \in B_{\frac{1+\sigma}{2}}(x_0)$ and $y \in \bR^n \setminus B_\rho(x_0)$, we get 
\[
\dfrac{|y-x_0|}{|y-x|} \leq \dfrac{2}{1-\sigma}\qquad \forall x \in B_{\frac{1+\sigma}{2}\rho}(x_0),\quad \forall y \in \bR^n \setminus B_\rho(x_0)
\]
and therefore,
\begin{align}\label{log-type est. eq.4}
\mathbf{II}_2 &\leq c\frac{|B_{\frac{1+\sigma}{2}}(x_0)|}{(1-\sigma)^{n+sp}}(t_2-t_1)\sup_{x\in B_{\frac{1+\sigma}{2}\rho}(x_0)}\int_{\bR^n \setminus B_\rho(x_0)}\frac{\dy}{|x-x_0|^{n+sp}} \notag\\[4mm]
&\leq c\frac{(1+\sigma)^n}{(1-\sigma)^{n+sp}}\rho^{n-sp}(t_2-t_1)
\end{align}
with a constant $c\equiv c(n,s,p,\Lambda)$, where $\displaystyle \int_{\bR^n \setminus B_\rho(x_0)}\frac{\dy}{|y-x_0|^{n+sp}}=\frac{c(n)}{sp}\rho^{-sp}$ is used again. Merging estimates~\eqref{log-type est. eq.3}--\eqref{log-type est. eq.4} with~\eqref{log-type est. eq.2}, we obtain
\begin{align*}
\mathbf{II} \leq \frac{c}{N}\left(\frac{1}{(1+\sigma)^p}+\frac{(1+\sigma)^n}{(1-\sigma)^{n+sp}}\right)\rho^{n-sp}(t_2-t_1).
\end{align*}
Combining the preceding estimates with~\eqref{log-type est. eq.1} implies that
\begin{align*}
&\frac{1}{c\rho^pN}\,\dashint_{t_1}^{t_2}\int_{B_\rho(x_0)}\left|\log u-\beta-V(t)\right|^p\dxt+\frac{V(t_2)-V(t_1)}{t_2-t_1} \\[3mm]
&\leq \frac{c|B_\rho|}{N(1+\sigma)^p\rho^p}+\frac{c}{N}\left(\frac{1}{(1+\sigma)^p}+\frac{(1+\sigma)^n}{(1-\sigma)^{n+sp}}\right)\rho^{n-sp}, 
\end{align*}
which together with the fact that $|B_\rho| \geq N \geq \sigma^n |B_\rho|$ yields 
\[
\frac{1}{c\rho^p|B_\rho|}\,\dashint_{t_1}^{t_2}\int_{B_{\sigma\rho}(x_0)}\left|\log u-\beta-V(t)\right|^p\dxt+\frac{V(t_2)-V(t_1)}{t_2-t_1} \leq C(\rho^{-p}+\rho^{-sp})
\]
with a constant $C=C(n,s,p,\Lambda,\sigma)$. Denoting
\begin{align*}
w(x,t)&:=\log u(x,t)-\beta-C(\rho^{-p}+\rho^{-sp})(t-t_0), \\[2mm]
W(t)&:=V(t)-C(\rho^{-p}+\rho^{-sp})(t-t_0)
\end{align*}
with noticing that $W(t_0)=0$, the above display becomes
\[
\frac{1}{c\rho^p|B_\rho|}\,\dashint_{t_1}^{t_2}\int_{B_{\sigma\rho}(x_0)}\left|w-W(t)\right|^p\dxt+\frac{W(t_2)-W(t_1)}{t_2-t_1} \leq 0.
\]
Since by the monotone increasing of the function $t \mapsto W(t)$, $W(t)$ is differentiable for almost everywhere $t \in (t_0-\rho^p,t_0+\rho^p)$, letting $t_2 \searrow t_1=t$ in the above formula gives that
\begin{equation}\label{log-type. est. eq.5}
\frac{1}{c\rho^p|B_\rho|}\int_{B_{\sigma\rho}(x_0)}\left|w-W(t)\right|^p\dx+W^\prime(t) \leq 0
\end{equation}
for almost everywhere $t \in (t_0-\rho^p,t_0+\rho^p)$.
\medskip

\emph{Step 2}. In this step we shall estimate the Lebesgue measure of
\begin{align*}
\Sigma_\lambda^- (t)&:=\Big\{x \in B_{\sigma\rho}(x_0): w(x,t) >\lambda \Big\}\quad \textrm{for}\quad t\in(t_0-(\sigma \rho)^p,\rho^p)
\shortintertext{and}
\Sigma_\lambda^+ (t)&:=\Big\{x \in B_{\sigma\rho}(x_0): w(x,t) <-\lambda \Big\}\quad \textrm{for}\quad t\in(t_0,t_0+(\sigma \rho)^p),
\end{align*}
where $\lambda>0$ is arbitrarily given. Since $W(t) < W(t_0)=0$ for $t \in (t_0-(\sigma \rho)^p,t_0)$ it holds that
\begin{align*}
\int_{B_{\sigma \rho}(x_0)}|w-W(t)|^p\dx \geq \int_{\Sigma_\lambda^- (t)}|\underbrace{\lambda-W(t)}_{>0}|^p\dx =(\lambda-W(t))^p\left|\Sigma_\lambda^- (t)\right|,
\end{align*}
and therefore, by~\eqref{log-type. est. eq.5}, for almost everywhere $t \in (t_0-(\sigma \rho)^p,t_0)$, we have
\[
\frac{\left|\Sigma_\lambda^- (t)\right|}{c\rho^{p}|B_\rho|}+\frac{W^\prime(t)}{(\lambda-W(t))^p}\leq 0.
\]
Integrating with respect to $t \in (t_0-(\sigma \rho)^p,t_0)$ yields
\[
\int_{t_0-(\sigma\rho)^p}^{t_0}\left|\Sigma_\lambda^- (t)\right|\dt \leq \frac{\overline{C}}{\lambda^{p-1}}\left|Q_{\sigma \rho}^-(z_0)\right|
\]
for a  constant $\overline{C}=\overline{C}(n,s,p,\sigma)$. Therefore, since $t-t_0<0$ for $t \in \left(t_0-(\sigma \rho)^p, t_0\right)$ we conclude that
\[
\Big|Q^-_{\sigma \rho}(z_0) \cap \left\{\log u>\lambda+\beta\right\}\Big| \leq \int_{t_0-(\sigma\rho)^p}^{t_0}\left|\Sigma_\lambda^- (t)\right|\dt \leq \frac{\overline{C}}{\lambda^{p-1}}\left|Q_{\sigma \rho}^-(z_0)\right|,
\]
as desired. 
By the same reasoning, we can deduce that
\[
\int_{t_0}^{t_0+(\sigma\rho)^p}\left|\Sigma_\lambda^+ (t)\right|\dt \leq \frac{\overline{C}}{\lambda^{p-1}}\left|Q_{\sigma \rho}^+(z_0)\right|
\]
and hence
\[
\Big|Q^+_{\sigma \rho}(z_0) \cap \left\{\log u<-\lambda+\beta\right\}\Big| \leq \int_{t_0}^{t_0+(\sigma\rho)^p}\left|\Sigma_\lambda^+ (t)\right|\dt \leq \frac{\overline{C}}{\lambda^{p-1}}\left|Q_{\sigma \rho}^+(z_0)\right|.
\]
This completes the proof of the lemma.
\end{proof}

We conclude this section by deducing the following helpful lemma that will be used later in the proof of Theorem~\ref{mainthm}.
\begin{lem}\label{supsub}
Let $p >1$ and $s\in (0,1)$. If $u$ is a weak supersolution to~\eqref{maineq} satisfying $u \geq m >0$ in $\bR^n \times (0,T)$, then $v=u^{-1}$ is a positive subsolution to~\eqref{maineq} replaced $\mathbf{A}$ by $\widetilde{\mathbf{A}}$, where the vactor field $\widetilde{\mathbf{A}}:\Omega_T \times \bR \times\bR^n \to \bR^n$ is defined by
\[
\widetilde{\mathbf{A}}\left(x,t,y,\zeta\right):=-\bar{y}^{2(p-1)}\mathbf{A}\left(x,t,\bar{y}^{-1},-\bar{y}^{-2}\zeta\right) \quad{\textrm{with}}\quad \bar{y}:=\min\left\{y,\frac{1}{m}\right\}
\]
and possesses the same structure constants as $\mathbf{A}$. 
\end{lem}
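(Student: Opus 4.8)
The plan is to verify directly that $v=u^{-1}$ satisfies the weak subsolution inequality~\eqref{D2} with $\mathbf A$ replaced by $\widetilde{\mathbf A}$, by performing the change of unknown $u\mapsto 1/v$ in each of the three terms of the weak formulation~\eqref{D2'}. Since $u\geq m>0$, the substitution $v=u^{-1}$ is a bi-Lipschitz change on $[m,\infty)$, the truncation $\bar y=\min\{y,1/m\}$ simply guarantees that when we evaluate $\widetilde{\mathbf A}$ along $v=u^{-1}\in(0,1/m]$ we recover $\bar v=v$, so that $\widetilde{\mathbf A}(x,t,v,Dv)=-v^{2(p-1)}\mathbf A(x,t,v^{-1},-v^{-2}Dv)=-u^{-2(p-1)}\mathbf A(x,t,u,Du)$ after using $Dv=-u^{-2}Du$; one should also check that $\widetilde{\mathbf A}$ inherits the structure conditions~\eqref{S1} with the same $c_0,c_1$, which is a one-line computation: the lower bound becomes $\widetilde{\mathbf A}(x,t,y,\zeta)\cdot\zeta=-\bar y^{2(p-1)}\mathbf A(x,t,\bar y^{-1},-\bar y^{-2}\zeta)\cdot\zeta=\bar y^{2p}\bar y^{-2}\mathbf A(x,t,\bar y^{-1},-\bar y^{-2}\zeta)\cdot(-\bar y^{-2}\zeta)\cdot\bar y^{?}$—more transparently, writing $\eta=-\bar y^{-2}\zeta$ gives $\widetilde{\mathbf A}\cdot\zeta=\bar y^{2(p-1)}\mathbf A(x,t,\bar y^{-1},\eta)\cdot(-\bar y^2\eta)\cdot(-1)=\bar y^{2p}\mathbf A(x,t,\bar y^{-1},\eta)\cdot\eta\geq c_0\bar y^{2p}|\eta|^p=c_0|\zeta|^p$, and similarly $|\widetilde{\mathbf A}(x,t,y,\zeta)|\leq\bar y^{2(p-1)}c_1|\bar y^{-2}\zeta|^{p-1}=c_1|\zeta|^{p-1}$.

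The main work is the parabolic term. Starting from~\eqref{D2'} for the supersolution $u$ with a nonnegative test function, one wants to re-expand $\partial_t([|u|^{p-2}u]_h)$ in terms of $w=v^{p-1}=u^{-(p-1)}$; the natural test function is $\psi=\varphi\,u^{2(p-1)}=\varphi v^{-2(p-1)}$ (nonnegative, and lying in $\mathcal T$ after the usual mollification because $u$ is bounded below), which is exactly the weight that turns the local operator term into $[\widetilde{\mathbf A}(x,t,v,Dv)]_h\cdot D\varphi$ modulo lower order errors. For the time term one uses the elementary convexity/monotonicity identity: writing $a=u^{p-1}$ and $b=[u^{p-1}]_h$, the mollified ODE $\partial_t b=(a-b)/h$ from Lemma~\ref{mollification lemma}(ii) gives, after multiplying by $b^{-2}\varphi$ and integrating, an expression that converges as $h\searrow0$ to $-\iint |v|^{p-2}v\,\partial_t\varphi\,\dx\dt$ plus a sign-favourable term, which is precisely what the subsolution inequality for $v$ requires; here one exploits that $|v|^{p-2}v=v^{p-1}=u^{-(p-1)}=1/a$ and $\partial_t(1/a)$ pairs against the mollification in the right way. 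This is the step where the positivity $u\geq m$ is genuinely used, both to make all the negative powers legitimate and to control the mollification limits via Lemma~\ref{mollification lemma2} and the time-continuity $u\in C([0,T];L^p(\Omega))$.

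For the nonlocal term, one substitutes $u(x,t)=v(x,t)^{-1}$ into $U(x,y,t)=|u(x,t)-u(y,t)|^{p-2}(u(x,t)-u(y,t))$ and tests against the same $\psi$; the key algebraic fact is that the kernel $K$ and the fractional difference are structurally symmetric, so that $\int_0^T\!\iint U_u(x,y,t)K(\varphi v^{-2(p-1)}(x)-\varphi v^{-2(p-1)}(y))\geq0$ can be rewritten, using the pointwise inequality relating $|v(x)^{-1}-v(y)^{-1}|^{p-2}(v(x)^{-1}-v(y)^{-1})(v(x)^{-\gamma}-v(y)^{-\gamma})$ to $-|v(y)-v(x)|^{p-2}(v(y)-v(x))$ up to positive factors (this is the nonlocal analogue of the monotonicity used in Lemma~\ref{Algs'} and is already implicit in Lemma~\ref{fractional est.}), as $\geq\int_0^T\!\iint V_v(x,y,t)K(x,y,t)(\varphi(x)-\varphi(y))$ where $V_v$ is the fractional $(p-1)$-difference of $v$; since $\widetilde{\mathcal L}=\mathcal L$ (the kernel is unchanged) this is exactly the nonlocal part of the subsolution inequality for $v$. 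Assembling the three pieces and passing $h\searrow0$ (and then removing the exponential mollification via~\eqref{D2'} read backwards, as in~\cite[Lemma 2.10]{Nakamura}) yields~\eqref{D2} for $v$ with $\widetilde{\mathbf A}$, i.e.\ $v$ is a positive subsolution. The main obstacle I anticipate is the careful justification of the limits in the mollified parabolic term — specifically identifying $\lim_{h\searrow0}\iint \partial_t[u^{p-1}]_h\,b^{-2}\varphi\,\dx\dt$ with $-\iint v^{p-1}\partial_t\varphi\,\dx\dt$ with the correct sign — which requires the duality between $[\cdot]_h$ and $[\cdot]_{\bar h}$ together with Lemmata~\ref{mollification lemma}–\ref{mollification lemma2} rather than any new idea.
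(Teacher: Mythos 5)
Your overall strategy is exactly the paper's: verify the structure conditions for $\widetilde{\mathbf A}$, substitute $u=1/v$ in the mollified weak formulation~\eqref{D2'} with a suitable power-weighted test function, split the parabolic term into a sign-favourable piece coming from $\partial_t b=(a-b)/h$ and a piece that converges under the substitution, treat the nonlocal term by the case analysis $v(x)\gtrless v(y)$, and then pass $h\searrow0$, remove the time cutoff, and send $t_1\searrow0$. The structure-condition computation and the observation about the truncation $\bar y$ are both correct.

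However, the test function you write down is wrong by a sign in the exponent: you take $\psi=\varphi\,u^{2(p-1)}=\varphi\,v^{-2(p-1)}$, but the correct weight is $\varphi\,u^{2(1-p)}=\varphi\,u^{-2(p-1)}=\varphi\,v^{2(p-1)}$, exactly as in the paper's choice $\varphi=\psi_\delta u^{2(1-p)}\phi$. With $\psi=\varphi\,u^{2(p-1)}$ the local term produces $u^{2(p-1)}\mathbf A(x,t,u,Du)\cdot D\varphi=-v^{-4(p-1)}\widetilde{\mathbf A}(x,t,v,Dv)\cdot D\varphi$, not $-\widetilde{\mathbf A}\cdot D\varphi$; the identity $\widetilde{\mathbf A}(x,t,v,Dv)=-u^{-2(p-1)}\mathbf A(x,t,u,Du)$ that you yourself derive earlier forces the negative power of $u$ in the weight. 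Indeed, you use the right weight implicitly a few lines later when you multiply by $b^{-2}\varphi$ with $b=[u^{p-1}]_h$, since $b^{-2}\approx u^{-2(p-1)}$: so your own parabolic-term treatment contradicts the stated test function. This is the one step that, taken literally, would make the proof fail; once the exponent is corrected, the rest of what you sketch lines up with the paper.

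Two smaller remarks. First, you invoke ``the duality between $[\cdot]_h$ and $[\cdot]_{\bar h}$'' for the parabolic limit, but the mechanism used is not a duality: it is the pointwise identity $\partial_t[u^{p-1}]_h\,[u^{p-1}]_h^{-2}=-\partial_t\bigl([u^{p-1}]_h^{-1}\bigr)$, integration by parts against $\psi_\delta\phi$ (where $\psi_\delta$ is a time cutoff vanishing near $t=t_1$ so boundary terms drop), and the quantitative convergence $[u^{p-1}]_h^{-1}\to v^{p-1}$ in $L^{p/(p-1)}$, which relies on $u\geq m$; see the paper's Lemma~\ref{supsub lemma}. Second, the residual boundary term $\int_\Omega v(x,t_1)^{p-1}\phi(x,t_1)\,\mathrm dx$ that survives after $\delta\searrow0$ is nonnegative and has to be shown to tend to $0$ as $t_1\searrow0$ (using that $\phi(\cdot,0)=0$ and $v\leq 1/m$); it cannot simply be dropped. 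You gesture at removing the cutoff, but this is the place where the positivity lower bound and the time-continuity from Lemma~\ref{mollification lemma2} are actually used, so it deserves to be made explicit.
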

\begin{proof}
First of all, by definition we see that $\widetilde{\mathbf{A}}$ is measurable with respect to $(x,t) \in \Omega_T$ for every $(y,\zeta)  \in \bR \times \bR^n$ and continuous with respect to $(y,\zeta)$ for almost everywhere $(x,t) \in \Omega_T$. A straightforward computation shows that $\widetilde{\mathbf{A}}$ fulfills the structure condition
\[
\begin{cases}
\widetilde{\mathbf{A}}(x,t,y,\zeta) \cdot \zeta \geq c_0 |\zeta|^p \\[2mm]
\left|\widetilde{\mathbf{A}}(x,t,y,\zeta)\right|\leq c_1|\zeta|^{p-1}
\end{cases}
\]
and therefore, the vector field $\widetilde{\mathbf{A}}$ is Carath\'{e}odory map fulfilling the same structural condition as $\mathbf{A}$.
\medskip

Let $t_1 \in (0,T)$ be an arbitrary time. For any $\delta>0$ small, let us define the cutoff function $\psi_\delta(t)$ with respect to time as
\[
\psi_\delta(t):=\begin{cases}
0,\quad &\,\,\,t \in (0, t_1), \\
\frac{1}{\delta}(t-t_1), \quad &\,\,\,t \in [t_1, t_1+\delta),\\
1,\quad &\,\,\,t \in [t_1+\delta, T).
\end{cases}
\]
Now, in the weak formulation~\eqref{D2'}, testing $\varphi=\psi_\delta u^{2(1-p)}\phi$ for any nonnegative $\phi \in \mathcal{T}$, we have
\begin{align}\label{supsub eq.1}
&\iint_{\Omega_T}\left(\partial_t[u^{p-1}]_{h}\,\psi_\delta u^{2(1-p)}\phi+\left[\mathbf{A}\left(x,t,u,Du\right)\right]_h\cdot D\left(\psi_\delta u^{2(1-p)}\phi\right)\right)\dxt \notag \\[3mm]
&\quad +\int_0^T\iint_{\bR^n \times \bR^n} \left[U(x,y,t)K(x,y,t)\right]_h \psi_\delta(t) \left(u(x,t)^{2(1-p)}\phi(x,t)-u(y,t)^{2(1-p)}\phi(y,t)\right)\dxyt  \notag\\[3mm]
&\geq
\int_\Omega u(0)^{p-1}\left(\dfrac{1}{h}\int_0^Te^{\frac{s}{h}}\psi_\delta(s)u^{2(1-p)}\phi(x,s)\ds\right)\dx.
\end{align}
For the evolutionary term, 
\begin{align*}
\iint_{\Omega_T}\partial_t[u^{p-1}]_{h}\,u^{2(1-p)}\psi_\delta \phi\dxt&=\iint_{\Omega_{t_1,T}}\partial_t[u^{p-1}]_{h}\left(u^{2(1-p)}-[u^{p-1}]_h^{-2}\right)\psi_\delta\phi\dxt \\[3mm]
&\quad +\iint_{\Omega_{t_1,T}}\partial_t[u^{p-1}]_{h}[u^{p-1}]_h^{-2}\psi_\delta \phi\dxt \\[3mm]
&=:\mathbf{I}_h+\mathbf{II}_h.
\end{align*}
Lemma~\ref{mollification lemma}-(ii) and inequality~\eqref{algs'} with $\alpha=3$  in Lemma~~\ref{Algs'} imply that
\[
\mathbf{I}_h=\iint_{\Omega_{t_1,T}}\psi_\delta \phi \frac{u^{p-1}-[u^{p-1}]_h}{h}\cdot \frac{[u^{p-1}]_h^2-u^{2(p-1)}}{[u^{p-1}]_h^2u^{2(p-1)}}\dxt \leq 0.
\]
For the observation of $\mathbf{II}_h$, we require the following lemma, whose proof will be given in Appendix~\ref{Appendix B}.  
\begin{lem}\label{supsub lemma}
Suppose that $u \geq m >0$ in $\bR^n\times (0,T)$ and fix $t_1 \in (0,T)$. Then
\[
[u^{p-1}]^{-1}_h \to u^{1-p}=v^{p-1} \quad \textrm{in}\,\,\,L^{\frac{p}{p-1}}(\Omega_{t_1,T}) \quad \textrm{as}\,\,\,h \searrow 0.
\]
\end{lem}
By Lemma~\ref{supsub lemma}, we have
\begin{align*}
\lim_{h \searrow 0}\,\mathbf{II}_h&=\lim_{h \searrow 0}\iint_{\Omega_{t_1,T}}\psi_\delta \phi\partial_t \left(-[u^{p-1}]_h^{-1}\right)\dxt \\[3mm]
&=\lim_{h \searrow 0}\iint_{\Omega_{t_1,T}}[u^{p-1}]_h^{-1}\partial_t(\psi_\delta\phi)\dxt \\[3mm]
&=\iint_{\Omega_{t_1,T}}v^{p-1}\partial_t(\psi_\delta\phi)\dxt \\[3mm]
&=\dashint_{t_1}^{t_1+\delta}\int_\Omega v^{p-1}\phi\dxt+\iint_{\Omega_{t_1,T}}v^{p-1}\psi_\delta\partial_t \phi\dxt.
\end{align*}
Combining the preceding estimates above, we infer that
\begin{align}\label{supsub eq.2}
&\limsup_{h \searrow 0}\iint_{\Omega_T}\partial_t[u^{p-1}]_{h}\,u^{2(1-p)}\psi_\delta\phi\dxt  \notag\\[3mm]
&\leq\limsup_{h \searrow 0} \left(\mathbf{I}_h+\mathbf{II}_h\right)
\leq \dashint_{t_1}^{t_1+\delta}\int_\Omega v^{p-1}\phi\dxt+\iint_{\Omega_{t_1,T}}v^{p-1}\psi_\delta\partial_t \phi\dxt
\end{align}
We shall estimate the spatial term involving $\mathbf{A}$.
As mentioned before, since
\[
[\mathbf{A}(x,t,u,Du)]_h \to \mathbf{A}(x,t,u,Du)\quad \textrm{in}\quad L^{\frac{p}{p-1}}(\Omega_T), 
\]
one can easily check that
\begin{align*}
&\lim_{h \searrow 0}\iint_{\Omega_T}\left[\mathbf{A}\left(x,t,u,Du\right)\right]_h \cdot D\left(u^{2(1-p)}\psi_\delta\phi\right)\dxt \\[3mm]
&=\iint_{\Omega_T}\mathbf{A}\left(x,t,u,Du\right) \cdot D\left(u^{2(1-p)}\psi_\delta \phi\right)\dxt \\[3mm]
&=\iint_{\Omega_{t_1,T}}\psi_\delta v^{2(p-1)}\mathbf{A}\left(x,t,v^{-1},-v^{-2}Dv\right)\cdot D\phi\dxt \\[3mm]
&\quad\quad \quad  +2(p-1)\iint_{\Omega_{t_1,T}}\psi_\delta\phi \,v^{2p-1}\mathbf{A}\left(x,t,v^{-1},-v^{-2}Dv\right)\cdot D\phi\dxt \\[3mm]
&=:\mathbf{I}+\mathbf{II},
\end{align*}
where the definition of $\mathbf{I}$ and $\mathbf{II}$ are clear form the context. 

By definition, $\mathrm{I}$ is rewritten as
\[
\mathbf{I}=-\iint_{\Omega_{t_1,T}}\psi_\delta\widetilde{\mathbf{A}}\left(x,t,v,Dv\right)\cdot D\phi \dxt. 
\]
For term $\mathbf{II}$ we have
\begin{align*}
\mathbf{II}&=-2(p-1)\iint_{\Omega_{t_1,T}}\psi_\delta\phi \,v^{-1}\widetilde{\mathbf{A}}\left(x,t,v,Dv\right)\cdot Dv\dxt \\[3mm]
&\leq -2(p-1)\iint_{\Omega_T}\psi_\delta\phi \,v^{-1}c_0|Dv|^p\dxt \leq 0.
\end{align*}
Combining the preceding estimates gives
\begin{align}\label{supsub eq.3}
&\lim_{h \searrow 0}\iint_{\Omega_T}\left[\mathbf{A}\left(x,t,u,Du\right)\right]_h\cdot D\left(u^{2(1-p)}\psi_\delta \phi\right)\dxt \notag\\[3mm]
&\leq \mathbf{I}+\mathbf{II} \leq -\iint_{\Omega_{t_1,T}}\psi_\delta \widetilde{\mathbf{A}}\left(x,t,v,Dv\right)\cdot D\phi\dxt.
\end{align}
For the fractional term in~\eqref{supsub eq.1}, using the same argument as Step 2 in Appendix~\ref{Appendix A}, we obtain that
\begin{align*}
&\lim_{h \searrow 0}\int_0^T\iint_{\bR^n \times \bR^n} \left[U(x,y,t)K(x,y,t)\right]_h\psi_\delta(t)\left(u(x,t)^{2(1-p)}\phi(x,t)-u(y,t)^{2(1-p)}\phi(y,t)\right)\dxyt \\[3mm]
&=\int_{t_1}^T\iint_{\bR^n \times \bR^n} U(x,y,t)K(x,y,t)\psi_\delta(t)\left(u(x,t)^{2(1-p)}\phi(x,t)-u(y,t)^{2(1-p)}\phi(y,t)\right)\dxyt \\[3mm]
&=:\mathbf{III}
\end{align*}
In order to estimate $\mathbf{III}$, we now follow the argument considered in~\cite{BGK}. Since
\[
U(x,y,t)=u(x,t)^{p-1}u(y,t)^{p-1}\left|v(x,t)-v(y,t)\right|^{p-2}\left(v(x,t)-v(y,t)\right),
\]
the integrand of $\mathbf{III}$ is written as
\begin{align*}
&U(x,y,t)K(x,y,t)\left(u(x,t)^{2(1-p)}\phi(x,t)-u(y,t)^{2(1-p)}\phi(y,t)\right) \\[3mm]
&=-V(x,y,t)K(x,y,t)\left[\left(\frac{v(x,t)}{v(y,t)}\right)^{p-1}\phi(x,t)-\left(\frac{v(y,t)}{v(x,t)}\right)^{p-1}\phi(y,t)\right]
\end{align*}
with the shorthand notation
\[
V(x,y,t):=\left|v(x,t)-v(y,t)\right|^{p-2}\left(v(x,t)-v(y,t)\right).
\]
We now distinguish between two cases $v(x,t) \geq v(y,t)$ and $v(x,t) <v(y,t)$. As we are considering the case $v(x,t) \geq v(y,t)$, since 
\[
\frac{v(x,t)}{v(y,t)}\geq 1,\quad \frac{v(y,t)}{v(x,t)} \leq 1
\]
we have the bound
\begin{align*}
&V(x,y,t)K(x,y,t)\left[\left(\frac{v(x,t)}{v(y,t)}\right)^{p-1}\phi(x,t)-\left(\frac{v(y,t)}{v(x,t)}\right)^{p-1}\phi(y,t)\right] \\[3mm]
&\geq V(x,y,t)K(x,y,t)\left(\phi(x,t)-\phi(y,t)\right).
\end{align*}
When $v(x,t) < v(y,t)$, since 
\[
\frac{v(x,t)}{v(y,t)}<1,\quad \frac{v(y,t)}{v(x,t)} >1
\]
we get
\begin{align*}
&V(x,y,t)\left[\left(\frac{v(x,t)}{v(y,t)}\right)^{p-1}\phi(x,t)-\left(\frac{v(y,t)}{v(x,t)}\right)^{p-1}\phi(y,t)\right] \\[3mm]
&\geq \left|v(x,t)-v(y,t)\right|^{p-2}\left(v(y,t)-v(x,t)\right)\left(\phi(y,t)-\phi(x,t)\right)\\[3mm]
&=V(x,y,t)\left(\phi(x,t)-\phi(y,t)\right)
\end{align*}
as well. Consequently, we conclude that
\begin{equation}\label{supsub eq.4}
\mathbf{III}\leq -\int_{t_1}^T\iint_{\bR^n \times \bR^n}V(x,y,t)K(x,y,t)\psi_\delta(t)\left(\phi(y,t)-\phi(x,t)\right)\dxyt.
\end{equation}
Since by $u \geq m >0$ in $\bR^n \times (0,T)$ and $\psi_\delta(s) \leq 1$ it holds that
\begin{equation}\label{supsub eq.5}
\lim_{h\searrow 0}\int_\Omega u(0)^{p-1}\left(\dfrac{1}{h}\int_0^Te^{\frac{s}{h}}u^{2(1-p)}\psi_\delta(s)\phi(x,s)\ds\right)\dx=0,
\end{equation}
collecting these estimations~\eqref{supsub eq.2}--\eqref{supsub eq.5} and, in~\eqref{supsub eq.1}, passing to the limit $h \searrow 0$ and subsequently, sending $\delta\ \searrow 0$, we conclude that 
\begin{align}\label{supsub eq.6}
&\iint_{\Omega_{t_1,T}}\left(v^{p-1}\partial_t\phi+\widetilde{\mathbf{A}}\left(x,t,v,Dv\right)\cdot D\phi\right)\dxt -\int_{\Omega} v(x,t_1)^{p-1}\phi(x,t_1)\dx \notag\\[3mm]
&\quad \quad \quad +\int_{t_1}^T\iint_{\bR^n \times \bR^n}V(x,y,t)K(x,y,t)\left(\phi(x,t)-\phi(y,t)\right)\dxyt \leq 0.
\end{align}
for every nonnegative  $\phi \in \mathcal{T}$. Now, we claim that
\begin{equation}\label{supsub eq.7}
\lim_{t_1 \searrow 0}\int_\Omega \phi(x,t_1)\dx=0.
\end{equation}
Indeed, for any nonnegative $\phi \in \mathcal{T}$, set
\[
\langle \phi \rangle_h:=
\left([\phi^{\frac{p}{2}}]_h\right)^{\frac{2}{p}}.
\]
By H\"{o}lder's inequality, we estimate 
\begin{align*}
0\leq \int_\Omega \phi(x,t_1)\dx&=\int_\Omega \left(\phi(x,t_1)-\langle \phi \rangle_h(x,t_1)\right)\dx+\int_\Omega \langle \phi \rangle_h(x,t_1)\dx \\[3mm]
&\leq \Big(\|\phi(t_1)-\langle \phi \rangle_h(t_1) \|_{L^p(\Omega)}+\|\langle \phi \rangle_h(t_1) \|_{L^p(\Omega)}\Big)|\Omega|^{\frac{p-1}{p}}.
\end{align*}
Since by Lemma~\ref{mollification lemma2}, for every $\varepsilon>0$ there exists $\delta>0$ such that
\[
\|\phi(t_1)-\langle \phi \rangle_h(t_1) \|_{L^p(\Omega)}<\varepsilon
\]
whenever $h \in (0,\delta)$ and $t_1 \in [0,T]$. This implies that
\begin{equation}\label{supsub eq.8}
0\leq \int_\Omega \phi(x,t_1)\dx \leq \Big(\varepsilon+\|\langle \phi \rangle_h(t_1) \|_{L^p(\Omega)}\Big)|\Omega|^{\frac{p-1}{p}}
\end{equation}
and moreover, in view of Lemma~\ref{mollification lemma2} and $\phi(x,0)=0$,
\[
\|\langle \phi \rangle_h(t_1) \|_{L^p(\Omega)} \to \|\langle \phi \rangle_h(0) \|_{L^p(\Omega)}=0 \quad \textrm{as}\,\,\,t_1 \searrow 0
\]
holds true. Passing to the limit $t_1\searrow 0$ in~\eqref{supsub eq.8} gives
\[
0\leq \lim_{t_1\searrow 0}\int_\Omega \phi(x,t_1)\dx \leq \varepsilon|\Omega|^{\frac{p-1}{p}},
\]
and subsequently, letting $\varepsilon\searrow 0$ proves ~\eqref{supsub eq.7}. 
\medskip

Thus, using~\eqref{supsub eq.7} and the assumption that $v^{-1}=u \geq m >0$ in $\bR^n \times (0,T)$, we have
\[
0 \leq\lim_{t_1\searrow 0}   \int_\Omega v(x,t_1)^{p-1}\phi(x,t_1)\dx \leq m^{1-p} \lim_{t_1\searrow 0}\int_\Omega \phi(x,t_1)\dx=0
\]
and therefore, using this and letting $t_1\searrow 0$ in~\eqref{supsub eq.6} conclude that
\begin{align*}
&\iint_{\Omega_{T}}\left(v^{p-1}\partial_t\phi+\widetilde{\mathbf{A}}\left(x,t,v,Dv\right)\cdot D\phi\right)\dxt \\[3mm]
&\quad \quad \quad +\int_{0}^T\iint_{\bR^n \times \bR^n}V(x,y,t)K(x,y,t)\left(\phi(x,t)-\phi(y,t)\right)\dxyt \leq 0,
\end{align*}
proving the claim.
\end{proof}

\section{Quantitative estimates for subsolutions}\label{Sect. 4}

In this section, we give quantitative estimates for subsolutions. We begin by deriving the reverse H\"{o}lder inequality for supersolutions, then we prove the local boundedness of supersolutions under the assumption that $u \geq m>0$ in $\bR^n \times (t_0,t_0+\rho^p)$.
By the same reasoning as Lemma~\ref{Caccioppoli1}, the following lemma holds true:
\begin{lem}[Caccioppoli type estimate for subsolutions]\label{Caccioppoli2}
Let $u$ be a weak subsolution to~\eqref{maineq} fulfilling $u \geq m>0$ in $\bR^n \times (t_0,t_0+\rho^p)$. With $p >1$ and $s \in (0,1)$ let $\varepsilon \in (0,p-1)$ and set $\alpha:=p-1+\varepsilon$. Then
\begin{align*}
&\sup_{t \in (t_0,t_0+\rho^p)}\int_{B_\rho(x_0) \times \{t\}}u^\alpha\varphi^p\dx+\iint_{Q^+_\rho(z_0)}|Du|^pu^{-\varepsilon-1}\varphi^p\dxt\\[4mm]
&\quad \quad \quad \quad \quad +\int_{t_0}^{t_0+\rho^p}\iint_{B_\rho(x_0)\times B_\rho(x_0)}\dfrac{\big|u(x,t)^{\frac{\alpha}{p}}\varphi(x,t)-u(y,t)^{\frac{\alpha}{p}}\varphi(y,t)\big|^p}{|x-y|^{n+sp}}\dxyt\\[4mm]
&\leq c\iint_{Q^+_\rho(z_0)}u^\alpha \varphi^{p-1}|\varphi_t|\dxt+c\iint_{Q^+_\rho(z_0)}u^\alpha |D\varphi|^p\dxt\\[4mm]
&\quad \quad +c\int_{t_0}^{t_0+\rho^p}\iint_{B_\rho(x_0)\times B_\rho(x_0)}\dfrac{\big(u(x,t)^{\alpha}+u(y,t)^{\alpha}\big)\big|\varphi(x,t)-\varphi(y,t)\big|^p}{|x-y|^{n+sp}}\dxyt \\[4mm]
&\quad \quad +c \left(\sup_{x \in \supp \,\varphi(\cdot, t)}\int_{\bR^n \setminus B_\rho(x_0)}\dfrac{\dy}{|x-y|^{n+sp}}\right)\iint_{Q^+_\rho(z_0)}u^\alpha\varphi^p\dxt
\end{align*}
holds whenever nonnegative $\varphi \in C^\infty_0(Q^+_\rho(z_0))$, where the constant $c \equiv c(p,c_0,c_1,\Lambda,\varepsilon)$. Note that the constant $c$ has a singularity near $\varepsilon=0$.
\end{lem}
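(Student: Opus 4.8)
The plan is to follow, almost line by line, the proof of Lemma~\ref{Caccioppoli1} carried out in Appendix~\ref{Appendix A}, performing every step on the forward cylinder $Q^+_\rho(z_0)$, keeping track of the reversed inequality sign in the weak formulation for subsolutions, and replacing the negative test exponent $-\varepsilon$ used there by the positive one $+\varepsilon$, so that now $\alpha=p-1+\varepsilon$. Concretely, I would start from the mollified weak inequality~\eqref{D2'} for a subsolution and insert the testing function $\phi=\chi_\theta(t)\,\bar u_\ell^{\,\varepsilon}\varphi^p$, where $\bar u_\ell:=\min\{u,\ell\}$ with $\ell\geq m$, $\chi_\theta$ is a Lipschitz-in-time cut-off equal to $1$ on $[t_0,\theta]$ and supported in $[t_0,\theta+\tau]$ (so that the supremum in $t$ over $\theta\in(t_0,t_0+\rho^p)$ is recovered by first letting $\tau\searrow0$), and $\varphi\in C_0^\infty(Q^+_\rho(z_0))$ is the given cut-off. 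The truncation $\bar u_\ell$ is needed because, unlike in the supersolution case where $u^{-\varepsilon}\leq m^{-\varepsilon}$ is automatically bounded, the positive power $u^{\varepsilon}$ need not be admissible as a test weight; after deriving the estimate for $\bar u_\ell$ one passes $h\searrow0$, then $\ell\nearrow\infty$ by monotone convergence (the asserted inequality being vacuous whenever its right-hand side equals $+\infty$), then $\tau\searrow0$, and finally takes the supremum over $\theta$.

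The three groups of terms are then handled exactly as in Appendix~\ref{Appendix A}. For the parabolic term I would split $\partial_t[u^{p-1}]_h\,\phi$ via Lemma~\ref{mollification lemma}-(ii) into one piece with a favourable sign (using the monotonicity of $\tau\mapsto\tau^{\varepsilon/(p-1)}$, which here can be discarded from the left-hand side) and one piece that, after an integration by parts in $t$ and the convergence $[u^{p-1}]_h^{\varepsilon/(p-1)}\to u^{\varepsilon}$ in the relevant $L^q$-space (a Lemma~\ref{mollification lemma2}-type statement), yields $\tfrac{p-1}{\alpha}\sup_\theta\int_{B_\rho(x_0)}u^\alpha\varphi^p\dx$ together with the error $\iint_{Q^+_\rho(z_0)}u^\alpha\varphi^{p-1}|\varphi_t|\dxt$. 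For the local elliptic term I would expand $D\phi=\varepsilon u^{\varepsilon-1}Du\,\varphi^p+pu^{\varepsilon}\varphi^{p-1}D\varphi$, use the ellipticity and growth bounds~\eqref{S1}, and absorb the cross term by Young's inequality with exponents $(\tfrac{p}{p-1},p)$; dividing the resulting inequality by $\varepsilon$ to isolate the clean gradient term $\iint_{Q^+_\rho(z_0)}|Du|^pu^{\alpha-p}\varphi^p\dxt$ is precisely what produces the singularity of the constant as $\varepsilon\searrow0$, and, since no factor of the form $1/\alpha$ with $\alpha\to0$ ever enters, this is the only singularity.

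The main obstacle, as in the supersolution case, is the fractional term $\iint\iint[UK]_h(\phi(x)-\phi(y))$. After passing $h\searrow0$ exactly as in Step~2 of Appendix~\ref{Appendix A}, I would split the double integral into the local piece over $B_\rho(x_0)\times B_\rho(x_0)$ and the far piece over $B_\rho(x_0)\times(\bR^n\setminus B_\rho(x_0))$, doubled by the symmetry~\eqref{S2} of $K$. On the far piece $\varphi(y)=0$, so the integrand is $U(x,y,t)K(x,y,t)u(x,t)^{\varepsilon}\varphi(x,t)^p$, which is bounded above by $\Lambda\,u(x,t)^{\alpha}\varphi(x,t)^p|x-y|^{-n-sp}$ once the contributions where $u(x,t)<u(y,t)$ (there the integrand is nonpositive) are discarded; integrating $y$ over $\bR^n\setminus B_\rho(x_0)$ then gives the last term on the right-hand side. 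On the local piece I would apply the positive-exponent analogue of Lemma~\ref{fractional est.} — obtained by the same elementary computation, with $a^{-\varepsilon},b^{-\varepsilon}$ replaced by $a^{\varepsilon},b^{\varepsilon}$ and $\alpha=p-1+\varepsilon$ — which bounds the integrand from below by a constant times $|u(x,t)^{\alpha/p}\varphi(x,t)-u(y,t)^{\alpha/p}\varphi(y,t)|^p|x-y|^{-n-sp}$ minus a constant times $\big(u(x,t)^\alpha+u(y,t)^\alpha\big)|\varphi(x,t)-\varphi(y,t)|^p|x-y|^{-n-sp}$; the first quantity is kept on the left and the second becomes the third term on the right. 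Because here $\alpha$ stays bounded away from $0$ as $\varepsilon\nearrow p-1$, the weights $\zeta(\varepsilon)$ and $1/\alpha$ appearing in that lemma remain bounded, which is exactly why — in contrast with Lemma~\ref{Caccioppoli1} — the constant is singular only near $\varepsilon=0$.

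The delicate point of the argument is therefore not any single estimate but the combination of the limits $h\searrow0$, $\ell\nearrow\infty$ and $\tau\searrow0$: one must check that the mollification identities of Lemma~\ref{mollification lemma} survive the truncation inside $\phi$, that the initial-time contribution on the right of~\eqref{D2'} vanishes (as in the computation leading to~\eqref{supsub eq.5}), and that the local and far nonlocal integrals are dominated uniformly enough in $\ell$ for Fatou's lemma and monotone convergence to apply. Once these passages are justified and all error terms are collected, rearranging yields precisely the stated Caccioppoli inequality.
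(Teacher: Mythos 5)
Your strategy is exactly what the paper intends: Lemma~\ref{Caccioppoli2} is stated with the one-line justification ``by the same reasoning as Lemma~\ref{Caccioppoli1},'' so retracing Appendix~\ref{Appendix A} on $Q^+_\rho(z_0)$, with the subsolution inequality sign, the positive test exponent $+\varepsilon$ so that $\alpha=p-1+\varepsilon$, and the positive-exponent analogue of Lemma~\ref{fractional est.} is the intended proof. You also correctly diagnose why the singularity sits only at $\varepsilon=0$: in the $\mathbf{A}$-term Young absorption one divides by $\varepsilon$, while in the fractional lemma the parameter $\alpha=p-1+\varepsilon$ now stays bounded away from $0$, so the factor $\varepsilon^{-(p-1)}$ is the only source of blow-up, unlike in Lemma~\ref{Caccioppoli1} where $\alpha=p-1-\varepsilon\to 0$ as $\varepsilon\nearrow p-1$.

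The one point that would not go through as written is the truncation. You put $\bar u_\ell^\varepsilon$ \emph{only} in the test function and then plan to split the parabolic term as $\partial_t[u^{p-1}]_h\bigl(\bar u_\ell^{\,\varepsilon}-[u^{p-1}]_h^{\varepsilon/(p-1)}\bigr)+\partial_t[u^{p-1}]_h[u^{p-1}]_h^{\varepsilon/(p-1)}$. The first piece does \emph{not} have a sign: on the set where $\ell<[u^{p-1}]_h^{1/(p-1)}<u$ one has $\partial_t[u^{p-1}]_h=(u^{p-1}-[u^{p-1}]_h)/h>0$ while $\bar u_\ell^{\,\varepsilon}-[u^{p-1}]_h^{\varepsilon/(p-1)}=\ell^\varepsilon-[u^{p-1}]_h^{\varepsilon/(p-1)}<0$, so the monotonicity argument that discards $\mathbf{I}_1$ fails. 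The fix is to truncate compatibly inside the comparison: split against $\min\bigl\{[u^{p-1}]_h^{1/(p-1)},\ell\bigr\}^{\varepsilon}$, which is a nondecreasing function of $[u^{p-1}]_h$ and restores the sign of $\mathbf{I}_1$; for $\mathbf{I}_2$ you then integrate by parts $\partial_t F_\ell([u^{p-1}]_h)$ with the primitive $F_\ell(\sigma)=\int_0^\sigma\min\{\tau^{1/(p-1)},\ell\}^\varepsilon\,{\rm d}\tau$, for which $F_\ell(u^{p-1})\leq\tfrac{p-1}{\alpha}u^\alpha$, and let $\ell\nearrow\infty$ by monotone convergence at the end. (Alternatively, for $\varepsilon\leq 1$ no truncation is needed since $u\in L^\infty(0,T;L^p(\Omega))$ and $u^{\varepsilon-1}\leq m^{\varepsilon-1}$ already make $u^\varepsilon\varphi^p$ admissible; the subtlety only arises for $\varepsilon>1$, i.e.\ $p>2$, which the paper tacitly glosses over.) With this corrected decomposition your outline is sound.
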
 
Following the same reasoning of Lemma~\ref{reverse lemma}, we obtain the Reverse H\"{o}lder type lemma for subsolutions.
\begin{lem}\label{reverse lemma'}
Suppose that  a weak subsolution $u$ to~\eqref{maineq} satisfies $u \geq m>0$ in $\bR^n \times (t_0,t_0+\rho^p)$. For any $\varepsilon>0$ let $\alpha:=p-1+\varepsilon$ and $\kappa:=\frac{n+p}{n}$. Then for any concentric cylinders $Q^+_r(z_0) \subset Q^+_\rho(z_0) \Subset \Omega_T$ the quantitative estimate 
\begin{align*}
\left(\fiint_{Q^+_r(z_0)}u^{\alpha \kappa}\dxt\right)^{\frac{1}{\alpha \kappa}}\leq c^{\frac{1}{\alpha \kappa}}\left[\left(\frac{\rho}{r}\right)^n\left(\frac{\rho}{\rho-r}\right)^{n+sp}(1+\rho^{(1-s)p})\right]^{\frac{1}{\alpha}}\left(\fiint_{Q^+_\rho(z_0)}u^\alpha\dxt\right)^{\frac{1}{\alpha}}
\end{align*}
holds true, where $c=c(n,s,p,c_0,c_1,\Lambda,\varepsilon)$ blows up as $\varepsilon \searrow  0$.
\end{lem}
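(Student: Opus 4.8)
The plan is to mirror verbatim the argument used for Lemma~\ref{reverse lemma}, replacing the backward cylinders $Q^-$ by the forward cylinders $Q^+$ and setting $\alpha:=p-1+\varepsilon$ in place of $p-1-\varepsilon$. The only structural change is that here $\alpha>0$ holds automatically for every $\varepsilon>0$, so no upper restriction on $\varepsilon$ enters this lemma; the singularity of the constant at $\varepsilon=0$ is inherited directly from the Caccioppoli estimate (Lemma~\ref{Caccioppoli2}), while there is no blow-up at $\varepsilon=p-1$ since $\alpha=p-1+\varepsilon$ stays bounded away from any bad value.

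First I would fix a cut-off $\varphi\in C^\infty(Q^+_\rho(z_0))$ with $\supp\varphi\subset Q^+_{\frac{\rho+r}{2}}(z_0)$, $0\leq\varphi\leq 1$ on $Q^+_\rho(z_0)$, $\varphi\equiv 1$ on $Q^+_r(z_0)$, $|D\varphi|\leq c/(\rho-r)$ and $|\varphi_t|\leq c/(\rho-r)^p$, and record the elementary inclusion estimate
\[
\fiint_{Q^+_r(z_0)}(u^{\frac{\alpha}{p}})^{p\kappa}\dxt=\fiint_{Q^+_r(z_0)}(u^{\frac{\alpha}{p}}\varphi)^{p\kappa}\dxt\leq\Big(\frac{\rho}{r}\Big)^{n+p}\fiint_{Q^+_\rho(z_0)}(u^{\frac{\alpha}{p}}\varphi)^{p\kappa}\dxt.
\]
Next I would apply the parabolic Gagliardo--Nirenberg inequality (Lemma~\ref{GN}) to $v:=u^{\frac{\alpha}{p}}\varphi$ with the choice $r=p$ in that lemma (so that $p\frac{n+r}{n}=p\kappa$), giving
\[
\fiint_{Q^+_\rho(z_0)}(u^{\frac{\alpha}{p}}\varphi)^{p\kappa}\dxt\leq c\rho^p\Big(\sup_{t}\dashint_{B_\rho(x_0)}u^\alpha\varphi^p\dx\Big)^{\frac{p}{n}}\fiint_{Q^+_\rho(z_0)}\Big(\big|D(u^{\frac{\alpha}{p}}\varphi)\big|^p+\Big|\frac{u^{\frac{\alpha}{p}}\varphi}{\rho}\Big|^p\Big)\dxt.
\]
Bounding $\big|D(u^{\frac{\alpha}{p}}\varphi)\big|^p\leq c\big(u^\alpha|D\varphi|^p+u^{\alpha-p}|Du|^p\varphi^p\big)$ and then invoking the Caccioppoli estimate for subsolutions (Lemma~\ref{Caccioppoli2}) to control both the slicewise supremum $\sup_t\dashint_{B_\rho(x_0)}u^\alpha\varphi^p\dx$ and the space--time integrals on the right, one arrives at a $\kappa$-th power of a sum $\mathbf{I}+\mathbf{II}+\mathbf{III}$ with exactly the three terms appearing in the proof of Lemma~\ref{reverse lemma}.

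Finally I would estimate $\mathbf{I},\mathbf{II},\mathbf{III}$ by the identical elementary computations: $\mathbf{I}$ collects the Caccioppoli data terms and rescales as $(\rho-r)^{-p}\rho^{n+p}\fiint_{Q^+_\rho(z_0)}u^\alpha\dxt$; for $\mathbf{II}$ one symmetrises in $x\leftrightarrow y$ and uses $\int_{B_\rho(x_0)}|x-y|^{-n+(1-s)p}\dy=|B_\rho|\rho^{(1-s)p}/((1-s)p)$; for $\mathbf{III}$ one uses $|y-x_0|/|y-x|\leq 2\rho/(\rho-r)$ for $x\in B_{\frac{\rho+r}{2}}(x_0)$, $y\notin B_\rho(x_0)$, together with $\int_{\bR^n\setminus B_\rho(x_0)}|y-x_0|^{-n-sp}\dy=c(n)\rho^{-sp}/(sp)$. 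Combining these and using $r^{n+p}=r^{n\kappa}$ yields the asserted inequality. I do not expect any genuine obstacle: the real content is already packaged in Lemma~\ref{Caccioppoli2} and Lemma~\ref{GN}, and the only care needed is the bookkeeping of the powers of $\rho$, of $\rho/(\rho-r)$ and of $\rho/r$, exactly as in Lemma~\ref{reverse lemma}.
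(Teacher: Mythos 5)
Your proposal is correct and is precisely what the paper intends: the paper itself gives no separate argument for this lemma, stating only ``Following the same reasoning of Lemma~\ref{reverse lemma}'', and you have simply carried out that mirroring in detail, with $Q^-$ replaced by $Q^+$, $\alpha=p-1-\varepsilon$ by $\alpha=p-1+\varepsilon$, and Lemma~\ref{Caccioppoli1} by Lemma~\ref{Caccioppoli2}. Your side remark that the constant degenerates only as $\varepsilon\searrow 0$ (with no bad value at $\varepsilon=p-1$) is also accurate and consistent with the statement of the lemma.
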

\begin{prop}[Local boundedness for subsolutions]\label{local bounds}
Suppose that a weak subsolution $u$ to~\eqref{maineq} satisfies that $u \geq m>0$ in $\bR^n \times (t_0,t_0+\rho^p)$. Let $\sigma$ and $\tau$ fulfill $0<\sigma<\tau \leq 1$ and let $\gamma>0$ be an arbitrary number. Then, there exists a positive constant $c=c(n,s,p,c_0,c_1,\Lambda,\gamma)$ such that
\[
\sup_{Q^+_{\sigma\rho}(z_0)}u \leq c\left(\sigma^{-(1-s)p}\frac{1+\rho^{(1-s)p}}{(\tau-\sigma)^{n+sp}}\right)^{\frac{n+p}{p\gamma}}\left(\fiint_{Q^+_{\tau \rho}}u^{\gamma}\dxt\right)^{\frac{1}{\gamma}},
\]
holds whenever  concentric cylinders $Q^+_{\sigma \rho}(z_0) \subset Q^+_{\tau\rho}(z_0) \Subset \Omega_T$.
\end{prop}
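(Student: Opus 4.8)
The plan is a Moser iteration driven by the reverse Hölder estimate of Lemma~\ref{reverse lemma'}, split into a direct iteration when $\gamma>p-1$ and an interpolation step that lowers the integrability exponent when $0<\gamma\le p-1$.

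\textbf{Direct iteration for $\gamma>p-1$.} Put $\kappa:=\frac{n+p}{n}$, $q_i:=\gamma\kappa^i$, $\varepsilon_i:=q_i-(p-1)>0$, and choose radii $\rho_i:=\big(\sigma+(\tau-\sigma)2^{-i}\big)\rho$, so that $\rho_0=\tau\rho$, $\rho_i\searrow\sigma\rho$ and the cylinders $Q_i:=Q^+_{\rho_i}(z_0)$ decrease to $Q^+_{\sigma\rho}(z_0)$. Applying Lemma~\ref{reverse lemma'} with $\rho=\rho_i$, $r=\rho_{i+1}$, $\alpha=q_i$, and using $\rho_i-\rho_{i+1}=2^{-(i+1)}(\tau-\sigma)\rho$, $\rho_i\le\tau\rho\le1$ and $1+\rho_i^{(1-s)p}\le1+\rho^{(1-s)p}$, I get the recursion
\[
Y_{i+1}\le c^{\,1/q_{i+1}}\Big[\big(\tfrac{\rho_i}{\rho_{i+1}}\big)^n\big(\tfrac{2^{i+1}}{\tau-\sigma}\big)^{n+sp}\big(1+\rho^{(1-s)p}\big)\Big]^{1/q_i}Y_i,\qquad Y_i:=\Big(\fiint_{Q_i}u^{q_i}\dxt\Big)^{1/q_i}.
\]
Iterating from $i=0$ and letting $i\to\infty$, the averages $Y_i$ converge to $\sup_{Q^+_{\sigma\rho}(z_0)}u$ (up to the fixed ratio $|Q^+_{\tau\rho}|/|Q^+_{\sigma\rho}|$), which yields
\[
\sup_{Q^+_{\sigma\rho}(z_0)}u\le c^{\,S}2^{(n+sp)T}M\Big(\frac{1+\rho^{(1-s)p}}{(\tau-\sigma)^{n+sp}}\Big)^{\!S}\Big(\fiint_{Q^+_{\tau\rho}(z_0)}u^{\gamma}\dxt\Big)^{1/\gamma},
\]
with $S:=\sum_{i\ge0}q_i^{-1}=\frac{\kappa}{\gamma(\kappa-1)}=\frac{n+p}{p\gamma}$, $T:=\sum_{i\ge0}iq_i^{-1}<\infty$ and $M:=\prod_{i\ge1}(\rho_{i-1}/\rho_i)^{1/q_i}\le(\tau/\sigma)^{nS}$; these series converge because $q_i$ grows geometrically and the constant $c=c(n,s,p,c_0,c_1,\Lambda,\varepsilon_i)$ is singular only as $\varepsilon_i\searrow0$ (which never occurs along our sequence, where $\varepsilon_i\nearrow\infty$) and grows at most polynomially in $\varepsilon_i$. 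Collecting the powers of $\sigma$ issuing from $M$ and from the $\rho_i^{-p}$ factors produced in the Gagliardo–Nirenberg step of Lemma~\ref{reverse lemma'}, together with $\tau\le1$, gives the asserted factor $\sigma^{-(1-s)p}$ and settles the case $\gamma>p-1$.

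\textbf{Lowering the exponent for $0<\gamma\le p-1$.} Fix $\gamma_0:=p>p-1$ and set $\mathcal Z(r):=\sup_{Q^+_r(z_0)}u$ for $\sigma\rho\le r\le\tau\rho$ (finite by the previous step, since $u\in L^p_{\mathrm{loc}}$). Applying the already-proven case (with $\gamma_0$ and radii $r_1<r_2$ in place of $\gamma$, $\sigma\rho$, $\tau\rho$) on a pair $Q^+_{r_1}(z_0)\subset Q^+_{r_2}(z_0)$ with $\sigma\rho\le r_1<r_2\le\tau\rho$, then interpolating
\[
\fiint_{Q^+_{r_2}}u^{\gamma_0}=\fiint_{Q^+_{r_2}}u^{\gamma_0-\gamma}u^{\gamma}\le\mathcal Z(r_2)^{\gamma_0-\gamma}\,\sigma^{-(n+p)}\fiint_{Q^+_{\tau\rho}}u^{\gamma}\dxt,
\]
and absorbing the $\mathcal Z(r_2)$–power by Young's inequality with conjugate exponents $\big(\tfrac{\gamma_0}{\gamma_0-\gamma},\tfrac{\gamma_0}{\gamma}\big)$, I obtain
\[
\mathcal Z(r_1)\le\tfrac34\,\mathcal Z(r_2)+A\,(r_2-r_1)^{-\beta}+B
\]
with $\beta:=(n+sp)\tfrac{n+p}{p\gamma}$ and $A,B$ independent of $r_1,r_2$, carrying precisely the powers of $1+\rho^{(1-s)p}$, $\sigma$, $\rho$ and $\big(\fiint_{Q^+_{\tau\rho}}u^{\gamma}\big)^{1/\gamma}$ dictated by the computation. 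The iteration Lemma~\ref{iteration lemma} on $[\sigma\rho,\tau\rho]$ then gives $\mathcal Z(\sigma\rho)\le c\big(A((\tau-\sigma)\rho)^{-\beta}+B\big)$; the powers of $\rho$ cancel (using $\tau\le1$), and one recovers the claimed bound, with exponent $\tfrac{n+p}{p\gamma}$, for every $\gamma>0$.

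\textbf{Main difficulty.} The crux is the bookkeeping rather than any new idea: one must (i) verify that the constants remain summable through the infinite iteration — which relies on $c(\varepsilon)$ in Lemma~\ref{reverse lemma'} being singular only at $\varepsilon=0$ and sub-exponential as $\varepsilon\to\infty$ — and (ii) keep exact track of the powers of $\sigma$, $\tau-\sigma$ and $\rho$ in both the iteration and the exponent-lowering step, so as to land precisely on the stated quantitative form, in particular the factor $\sigma^{-(1-s)p}$.
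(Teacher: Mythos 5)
Your two-step strategy---direct Moser iteration for $\gamma>p-1$, then interpolation, Young absorption and the Giaquinta--Giusti iteration lemma for $0<\gamma\le p-1$---is the same one the paper employs, but the bookkeeping of the $\sigma$-power is faulty at both stages and the statement is not actually recovered.

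In the direct iteration you attribute $\sigma^{-(1-s)p}$ to ``$M$ and the $\rho_i^{-p}$ factors from the Gagliardo--Nirenberg step of Lemma~\ref{reverse lemma'}''. This is wrong: Lemma~\ref{reverse lemma'} has already absorbed the radius into its statement, so the only factors accumulating in the iteration are $(\rho_i/\rho_{i+1})^n\le 2^n$, $(\rho_i/(\rho_i-\rho_{i+1}))^{n+sp}$ and $1+\rho_i^{(1-s)p}$; there are no $\rho_i^{-p}$ factors, and $M=\prod(\rho_{i-1}/\rho_i)^{1/q_i}$ telescopes to $(\rho_0/\rho_\infty)^{O(1/(\gamma\kappa))}$, not to $\sigma^{-(1-s)p\cdot S}$. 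The iteration therefore yields
\[
\sup_{Q^+_{\sigma\rho}(z_0)}u\le\Big[C\Big(\tfrac{\tau}{\tau-\sigma}\Big)^{n+sp}\big(1+\rho^{(1-s)p}\big)\Big]^{\frac{n+p}{p\gamma}}\Big(\fiint_{Q^+_{\tau\rho}}u^\gamma\dxt\Big)^{1/\gamma}
\]
with no $\sigma$-factor at all; this still implies the statement only because $\sigma^{-(1-s)p}\ge1$, so the case $\gamma>p-1$ is salvaged but for a reason other than the one you give.

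The genuine gap is in the interpolation step. The crude estimate $\fiint_{Q^+_{r_2}}u^\gamma\le\sigma^{-(n+p)}\fiint_{Q^+_{\tau\rho}}u^\gamma$, pushed through Young's inequality with exponent $p/\gamma$, injects a factor $\sigma^{-(n+p)/\gamma}$ into the absorbed term. Since $1-s<1$, this is strictly larger than the claimed $\sigma^{-(1-s)p\frac{n+p}{p\gamma}}=\sigma^{-(1-s)(n+p)/\gamma}$, so the iteration lemma delivers a weaker inequality than the one to be proved; the phrase ``$A,B$ carry precisely the powers dictated by the computation'' is exactly the step that is not carried out, and as written it fails. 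The paper keeps the exact volume ratio $(\tau/\theta_2)^{n+p}$ (instead of its majorant $\sigma^{-(n+p)}$) and cancels it against the $\theta_2^{n+sp}$ factor produced by the direct iteration:
\[
\theta_2^{\,n+sp}\Big(\frac{\tau}{\theta_2}\Big)^{n+p}=\theta_2^{-(1-s)p}\,\tau^{\,n+p}\le\sigma^{-(1-s)p},
\]
using $\tau\le1$ and $\theta_2\ge\sigma$. That cancellation is the sole source of the $\sigma^{-(1-s)p}$ in the statement, and it is precisely what your crude bound discards.
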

\begin{proof}
 Let $q_0>p-1$. Fix $0<\sigma\leq \theta_1<\theta_2 \leq \tau\leq 1$ and define, for $i=0,1,2\ldots$,
\[
\rho_i:=\theta_1\rho+\frac{(\theta_2-\theta_1)\rho}{2^i}\quad ; \quad Q_i:= Q^+_{\rho_i}(z_0)=B_{\rho_i}(x_0)\times (t_0,t_0+\rho_i^p)
\]
and therefore the following inclusions hold inductively:
\[
\rho_0=\theta_2\rho \geq \cdots \geq \rho_i \geq \rho_\infty=\theta_1 \rho,\qquad Q_0=Q^+_{\theta_2\rho} \supset \cdots \supset Q_i \supset \cdots \supset Q_\infty=Q^+_{\theta_1\rho}.
\]
Applying Lemma~\ref{reverse lemma'} with $r=\rho_{i+1}$, $\rho=\rho_i$ and
\[
q_{i+1}=\alpha \kappa, \quad q_i=\alpha
\]
which means that $q_i=q_0\kappa^i \nearrow \infty$ as $i \to \infty$,  
we get for $i=0,1,\ldots$,
\begin{align}\label{reverse eq.a'}
\left(\fiint_{Q_{i+1}}u^{q_{i+1}}\dxt\right)^{\frac{1}{q_{i+1}}}\leq c^{\frac{1}{\alpha \kappa}}\left[\left(\frac{\rho_i}{\rho_{i+1}}\right)^n\left(\frac{\rho_i}{\rho_i-\rho_{i+1}}\right)^{n+sp}(1+\rho_i^{(1-s)p})\right]^{\frac{1}{q_i}}\left(\fiint_{Q_i}u^{q_i}\dxt\right)^{\frac{1}{q_i}}.
\end{align}
One readily checks the three bounds: 
\[
\frac{\rho_i}{\rho_{i+1}}\leq 2,\qquad \frac{\rho_i}{\rho_i-\rho_{i+1}}\leq \frac{2^{i+1}\theta_2}{\theta_2-\theta_1} \quad \textrm{and}\quad  \rho_i^{(1-s)p} \leq \rho^{(1-s)p}.
\]
These bounds and~\eqref{reverse eq.a'} give that
\begin{align}\label{reverse eq.b'}
Y_{i+1}&\leq c^{\frac{1}{q_{i+1}}}\left[2^n\left(\frac{2^{i+1}\theta_2}{\theta_2-\theta_1}\right)^{n+sp}(1+\rho^{(1-s)p})\right]^{\frac{1}{q_i}}Y_i \notag\\[4mm]
&\leq c^{\frac{1}{q_{i+1}}}4^{(n+sp)\frac{i}{q_i}}\left[\left(\frac{\theta_2}{\theta_2-\theta_1}\right)^{n+sp}\left(1+\rho^{(1-s)p}\right)\right]^{\frac{1}{q_i}}Y_i,
\end{align}
where we again used the shorthand notation
\[
Y_i:=\left(\fiint_{Q_{i}}u^{q_{i}}\dxt\right)^{\frac{1}{q_{i}}}\qquad \forall i =0,1,2,\ldots.
\]
Iterating~\eqref{reverse eq.b'}, we obtain that, for all $k \in \mathbb{N}$,
\begin{equation}\label{reverse eq.c'}
Y_k\leq c^{S(k)}4^{(n+sp)T(k)}\left[\left(\frac{\theta_2}{\theta_2-\theta_1}\right)^{n+sp}\left(1+\rho^{(1-s)p}\right)\right]^{S(k)}Y_0,
\end{equation}
where
\[
S(k):=\sum_{i=0}^k\frac{1}{q_i},\qquad T(k):=\sum_{i=0}^{k-1}\frac{i}{q_i}.
\]
As argued before, since
\[
\lim \limits_{k \to \infty}S(k)=\dfrac{\kappa}{q_0(\kappa-1)}=\dfrac{n+p}{q_0p}
\]
and
\[
\lim \limits_{k \to \infty}T(k)=\dfrac{\kappa}{q_0(\kappa-1)^2},
\]
passing  to the limit  $k \to \infty $ in~\eqref{reverse eq.c'}, we arrive at
\begin{align*}
\sup_{Q_{\theta_1\rho}(z_0)}u=\lim_{k \to \infty}Y_k &\leq \left[C\left(\frac{\theta_2}{\theta_2-\theta_1}\right)^{n+sp}\left(1+\rho^{(1-s)p}\right)\right]^{\frac{n+p}{q_0p}}\left(\fiint_{Q_{\theta_2 \rho}}u^{q_{0}}\dxt\right)^{\frac{1}{q_{0}}}
\end{align*}
with $C=C(n,s,p,c_0,c_1,\Lambda)$. We now choose $q_0=p$ and $\beta \in (0, p)$. Young's inequality gives
\begin{align}\label{reverse eq.d'}
\sup_{Q^+_{\theta_1\rho}(z_0)}u &\leq \left[C\left(\frac{\theta_2}{\theta_2-\theta_1}\right)^{n+sp}\left(1+\rho^{(1-s)p}\right)\right]^{\frac{n+p}{p^2}}\left(\fiint_{Q^+_{\theta_2\rho}}u^{p}\dxt\right)^{\frac{1}{p}} \notag\\[4mm]
&\leq \sup_{Q^+_{\theta_2\rho}(z_0)}u^{\frac{\beta}{p}}\left[C\left(\frac{\theta_2}{\theta_2-\theta_1}\right)^{n+sp}\left(\frac{\tau}{\theta_2}\right)^{p}\left(1+\rho^{(1-s)p}\right)\right]^{\frac{n+p}{p^2}}\left(\fiint_{Q^+_{\tau \rho}}u^{p-\beta}\dxt\right)^{\frac{1}{p}}  \notag\\[4mm]
&\leq \frac{1}{2}\sup_{Q^+_{\theta_2\rho}(z_0)}u+\left(C\sigma^{-(1-s)p}\frac{1+\rho^{(1-s)p}}{(\theta_2-\theta_1)^{n+sp}}\right)^{\frac{n+p}{p(p-\beta)}}\left(\fiint_{Q^+_{\tau \rho}}u^{p-\beta}\dxt\right)^{\frac{1}{p-\beta}},
\end{align}
where in the last line we carefully estimated that 
\[
\theta_2^{n+sp} \left(\frac{\tau}{\theta_2}\right)^{p} \leq \theta_2^{n+sp} \frac{\tau^p}{\theta_2^{n+p}} \leq \theta_2^{-(1-s)p} \leq \sigma^{-(1-s)p}.
\]
Set $\gamma:=p-\beta>0$ and define, for $s\in [\sigma,\tau]$, $\mathscr{Z}(s):=\sup\limits_{Q_{s\rho}(z_0)}u$. Since $\mathscr{Z}(s)$ is nonnegative and bounded by~\eqref{reverse eq.d'}$_1$, Lemma~\ref{iteration lemma} with 
\[
A\equiv \left(C\sigma^{-(1-s)p}\left(1+\rho^{(1-s)p}\right)\right)^{\frac{n+p}{p\gamma}}\left(\fiint_{Q^+_{\tau \rho}}u^{\gamma}\dxt\right)^{\frac{1}{\gamma}}
\]
yields that
\[
\sup_{Q^+_{\sigma\rho}(z_0)}u \leq C(n,s,p,c_0,c_1,\Lambda,\gamma)\left(\sigma^{-(1-s)p}\frac{1+\rho^{(1-s)p}}{(\tau-\sigma)^{n+sp}}\right)^{\frac{n+p}{p\gamma}}\left(\fiint_{Q^+_{\tau \rho}}u^{\gamma}\dxt\right)^{\frac{1}{\gamma}},
\]
finishes the proof.
\end{proof}
We note that, removing the positivity condition $u \geq m>0$ and adding the restriction $p \geq 2$, we can deduce the local boundedness with tail as follows. The proof can be similarly as~\cite[Theorem 1.1]{Nakamura}.
\begin{thm}[Local boundedness with tail for subsolutions]\label{local bounds with tail}
Let $2 \leq p <\infty $, $s \in (0,1)$ and let $u$ be a possibly sign-changing weak subsolution to~\eqref{maineq} in the sense of Definition~\ref{weak sol}. Then, for every forward space-time cylinder $Q^+_{\rho}(z_0) \Subset \Omega_T$ and every $\delta \in (0,1]$ there exists a constant $c\equiv c(n,s,p,c_0,c_1,\Lambda)$ such that
\[
\sup_{Q^+_{\frac{\rho}{2}}(z_0)} u \leq \delta \,\mathrm{Tail}_\infty \left(u_+, Q^+_{\frac{\rho}{2}}(z_0)\right)+c(1+\rho^{(1-s)p}+\rho^p\delta^{1-p})^{\frac{n+p}{p^2}}\left(\,\fiint_{Q^+_\rho(z_0)}u_+^p\dxt\right)^{\frac{1}{p}},
\]
where $u_+:=\max\{u,0\}$ denotes the positive part of $u$.
\end{thm}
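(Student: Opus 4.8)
The plan is a De Giorgi level-set iteration on the truncations $(u-k)_+$, $k\ge 0$, inside the forward cylinder, following \cite[Theorem~1.1]{Nakamura}; since $u$ may change sign, powers of $u$ are unavailable (unlike the Moser scheme of Section~\ref{Sect. 4}), so every estimate is phrased in terms of $(u-k)_+$ and $\mathrm{Tail}_\infty(u_+,\cdot)$. First I would establish an energy estimate at level $k$: for concentric forward cylinders $Q^+_{r'}(z_0)\subset Q^+_r(z_0)\subset Q^+_\rho(z_0)$ and a cut-off $\zeta\in C^\infty_0(Q^+_r(z_0))$ with $\zeta\equiv 1$ on $Q^+_{r'}(z_0)$, $|D\zeta|\le c(r-r')^{-1}$, $|\zeta_t|\le c(r-r')^{-p}$,
\begin{align*}
&\sup_{t}\int_{B_r(x_0)}(u-k)_+^p\zeta^p\dx+\iint_{Q^+_r(z_0)}\big|D\big((u-k)_+\zeta\big)\big|^p\dxt\\
&\le c\iint_{Q^+_r(z_0)}(u-k)_+^p\big(|D\zeta|^p+\zeta^{p-1}|\zeta_t|\big)\dxt+c\iint_{Q^+_r(z_0)}\!\!\!(\textrm{near-diagonal nonlocal term})\dxt\\
&\quad+c\,\rho^{-sp}\,\mathrm{Tail}_\infty\big(u_+,Q^+_\rho(z_0)\big)^{p-1}\iint_{Q^+_r(z_0)}(u-k)_+\zeta^p\dxt .
\end{align*}
This is obtained by testing the mollified weak formulation~\eqref{D2'} with the $\langle\cdot\rangle_h$-regularization of $(u-k)_+\zeta^p$ and letting $h\searrow 0$, exactly as in Appendix~\ref{Appendix A}: the parabolic term is controlled with the exponential time-mollification and Lemma~\ref{mollification lemma2}, using that for $p\ge2$ the map $r\mapsto|r|^{p-2}r$ is $C^1$ with monotone derivative; the $\mathbf A$-part is handled by~\eqref{S1} and Young's inequality; the nonlocal double integral is split into $B_\rho(x_0)\times B_\rho(x_0)$, where Lemma~\ref{Gamma} yields the near-diagonal contribution as in the proof of Lemma~\ref{Caccioppoli1}, and $B_r(x_0)\times(\bR^n\setminus B_\rho(x_0))$, where on $\{u(x,t)>k\}$ one estimates $|u(x,t)-u(y,t)|^{p-2}(u(x,t)-u(y,t))\le c\big((u(x,t)-k)_+^{p-1}+(u(y,t))_+^{p-1}\big)$ (here $k\ge0$ and $p\ge2$ are used) to produce the tail factor.

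Next I would run the iteration. Set $k_j:=k(1-2^{-j})$, $r_j:=\tfrac\rho2(1+2^{-j})$, $Q_j:=Q^+_{r_j}(z_0)$ and $A_j:=\fiint_{Q_j}(u-k_j)_+^p\dxt$, so that $k_0=0$, $Q_0=Q^+_\rho(z_0)$, $Q_\infty=Q^+_{\rho/2}(z_0)$ and $A_0=\fiint_{Q^+_\rho(z_0)}u_+^p\dxt$. Applying the energy estimate on $(Q_{j+1},Q_j)$, then the parabolic Gagliardo--Nirenberg inequality (Lemma~\ref{GN} with $r=p$) to $v=(u-k_{j+1})_+\zeta$, together with $\big|Q_j\cap\{u>k_{j+1}\}\big|\le(k_{j+1}-k_j)^{-p}\int_{Q_j}(u-k_j)_+^p\dxt$ and the standing requirement $k\ge\tfrac{\delta}{2}\,\mathrm{Tail}_\infty\big(u_+,Q^+_{\rho/2}(z_0)\big)$, which converts the tail factor into one dominated by $\delta^{1-p}$, leads to a recursion of the form
\[
A_{j+1}\ \le\ C\,b^{\,j}\,k^{-p\beta}\big(1+\rho^{(1-s)p}+\rho^p\delta^{1-p}\big)\,A_j^{\,1+\beta},\qquad \beta:=\tfrac{p}{n+p},
\]
with $b>1$ and $C=C(n,s,p,c_0,c_1,\Lambda)$. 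A fast geometric convergence lemma (of the type of Lemma~\ref{iteration lemma}) then gives $A_j\to0$, i.e.\ $\sup_{Q^+_{\rho/2}(z_0)}u\le k$, provided $A_0\le c_\ast k^p\big(1+\rho^{(1-s)p}+\rho^p\delta^{1-p}\big)^{-1/\beta}$, that is, provided
\[
k\ \ge\ c\,\big(1+\rho^{(1-s)p}+\rho^p\delta^{1-p}\big)^{\frac{n+p}{p^2}}\Big(\fiint_{Q^+_\rho(z_0)}u_+^p\dxt\Big)^{\frac1p},
\]
since $\tfrac1{p\beta}=\tfrac{n+p}{p^2}$. Choosing $k$ equal to the sum of the right-hand sides of the two constraints makes both hold at once, and $\sup_{Q^+_{\rho/2}(z_0)}u\le k$ is exactly the asserted inequality after absorbing the factor $\tfrac12$ into the free parameter $\delta$ (which only changes $c$).

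The main obstacle is the rigorous treatment of the parabolic term of the energy estimate, because $\partial_t(|u|^{p-2}u)$ is not a pointwise object: one must use the exponential mollification $[\,\cdot\,]_h$ and the convergences of Lemma~\ref{mollification lemma2}, and exploit that $r\mapsto|r|^{p-2}r$ has a monotone derivative for $p\ge2$ in order to pass to the limit while retaining the correct sign of the time term — this is precisely where the restriction $p\ge2$ is needed once positivity is dropped, and it mirrors Step~1 of Appendix~\ref{Appendix A}. A secondary difficulty is organizing the far interaction in the nonlocal term so that exactly $\mathrm{Tail}_\infty(u_+,\cdot)$ appears, and distributing the time-integration together with the $\delta$-splitting of the tail term carefully enough that the bracket $1+\rho^{(1-s)p}+\rho^p\delta^{1-p}$ and the exponent $\tfrac{n+p}{p^2}$ emerge with the stated $\rho$-homogeneity.
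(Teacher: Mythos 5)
Your proposal is correct and follows essentially the same route the paper indicates: the paper itself offers no proof of Theorem~\ref{local bounds with tail} beyond the remark that it ``can be similarly as \cite[Theorem 1.1]{Nakamura}'', and the method in that reference is precisely the De Giorgi level-set iteration you sketch --- Caccioppoli estimate for $(u-k)_+$ via the mollified weak formulation, decomposition of the nonlocal term into a near-diagonal part (controlled by Lemma~\ref{Gamma}) and a far-field part producing $\mathrm{Tail}_\infty(u_+,\cdot)$, Gagliardo--Nirenberg (Lemma~\ref{GN}), and a fast geometric convergence lemma, with the constraint $k \gtrsim \delta\,\mathrm{Tail}_\infty$ absorbing the tail into the $\rho^p\delta^{1-p}$ bracket. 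Your identification of where $p\geq 2$ enters (the energy-function comparability in the parabolic term once positivity is dropped, and the pointwise estimate of $U$ by $u_+^{p-1}$ in the far interaction) and the exponent count $1/(p\beta)=(n+p)/p^2$ with $\beta=p/(n+p)$ both agree with the cited argument.
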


\section{Proof of Theorem~\ref{mainthm}}\label{Sect. 5}
We are now ready to prove Theorem~\ref{mainthm}.
\begin{proof}
We shall prove Theorem~\ref{mainthm} step by step.
\medskip

\emph{Step 1: Setting.}\quad For a supersolution $u$ to~\eqref{maineq} satisfying $u>m \geq 0$ in $\bR^n \times (0,T)$, we set
\[
\beta:=\int_{B_\rho(x_0)}\chi^p(x)\log u(x,t_0)\dx
\]
with the cut-off function appearing on Lemma~\ref{log-type est.}, and define 
\[
v^-:=ue^{-\beta} \quad \textrm{and} \quad  v^+:=u^{-1}e^\beta.
\]
By Lemma~\ref{supsub}, $v^-$ and $v^+$ are weak supersolution to~\eqref{maineq} and  subsolution to~\eqref{maineq} replaced $\mathbf{A}$ by $\widetilde{\mathbf{A}}$, respectively. Thus, applying Lemma~\ref{log-type est.} with $\sigma=\tau$, we infer that
\begin{align}\label{mainthm eq.1}
\Big|Q^-_{\tau\rho}(z_0) \cap \left\{\log v^->\lambda\right\} \Big|&\leq \frac{\overline{C}}{\lambda^{p-1}}\Big|Q^-_{\tau\rho}(z_0)\Big| \notag
\intertext{and}
\Big|Q^+_{\tau\rho}(z_0) \cap \left\{\log v^+>\lambda\right\} \Big|&\leq \frac{\overline{C}}{\lambda^{p-1}}\Big|Q^+_{\tau\rho}(z_0)\Big|,
\end{align}
where $\lambda>0$ and $\overline{C}=\overline{C}(n,s,p,\tau)$.
\medskip

\emph{Step 2: Bombieri-Giusti type estimate.}\quad Following the idea from~\cite[Theorem 1.4]{BG}, we derive the Bombieri-Giusti type estimate as follows: given $\delta \in (0,1)$, let us set 
\[
\Phi(\tau):=\log\left(\fiint_{Q^+_{\tau \rho}}(v^+)^{q^+}\dxt\right)^{\frac{1}{q}}, \quad \Psi(\tau):=\log\left(\fiint_{Q^-_{\tau \rho}}(v^-)^{q^-}\dxt\right)^{\frac{1}{q^-}}
\]
for every $\delta \leq \tau \leq \frac{1+\delta}{2}$,  $q^+>0$ and $0<\gamma<q^-<\frac{n+p}{n}(p-1)$. By H\"{o}lder's inequality and~\eqref{mainthm eq.1}$_2$ with $\lambda=\Phi(\tau)/2>0$, we have, for $0<\gamma<q$, 
\begin{align*}
\fiint_{Q^+_{\tau \rho}}(v^+)^{q^+}\dxt&=\frac{1}{|Q^+_{\tau \rho}|}\left[\iint_{Q^+_{\tau \rho} \cap \{v^+ \,\leq \,\exp(\Phi(\tau)/2)\}}(v^+)^\gamma\dxt+\iint_{Q^+_{\tau \rho} \cap \{v^+\, > \,\exp(\Phi(\tau)/2)\}}(v^+)^\gamma\dxt\right] \\[3mm]
&\leq \exp\left(\gamma \Phi(\tau)/2\right)+\left(\fiint_{Q^+_{\tau \rho}}(v^+)^{q^+}\dxt\right)^{\frac{\gamma}{q^+}}\left(\frac{\big|Q^+_{\tau \rho}\cap\{v^+>\exp(\Phi(\tau)/2)\}\big|}{|Q^+_{\tau \rho}|}\right)^{1-\frac{\gamma}{q^+}} \\[3mm]
&\!\!\!\stackrel{\eqref{mainthm eq.1}_2}{\leq}\exp\left(\gamma \Phi(\tau)/2\right)+\exp\left(\gamma \Phi(\tau)\right)\left(\frac{\overline{C}}{(\Phi(\tau)/2)^{p-1}}\right)^{1-\frac{\gamma}{q^+}}.
\end{align*}
Choosing $\Phi(\tau)$ enough large so that
\[
0<\log\left(\frac{(\Phi(\tau)/2)^{p-1}}{\overline{C}}\right) \leq q^+ \Phi(\tau) \iff \exp(-q^+\Phi) \leq \frac{\overline{C}}{(\Phi(\tau)/2)^{p-1}}<1,
\]
which in turn implies
\[
\fiint_{Q^+_{\tau \rho}}(v^+)^{q^+}\dxt \leq \exp\left(\gamma \Phi(\tau)/2\right)\left[1+\exp(\gamma\Phi(\tau)/2)\left(\frac{\overline{C}}{(\Phi(\tau)/2)^{p-1}}\right)^{1-\frac{\gamma}{q^+}}\right].
\]
Since by selecting the exponent $\gamma$ such that
\begin{align}\label{mainthm eq.2}
&\frac{2}{3}q^+ \geq \gamma=\frac{2}{3}\Phi(\tau)^{-1}\log\left(\frac{(\Phi(\tau)/2)^{p-1}}{\overline{C}}\right) \notag \\[3mm]
\iff &\exp(\gamma\Phi(\tau)/2)=\left(\frac{\overline{C}}{(\Phi(\tau)/2)^{p-1}}\right)^{-\frac{1}{3}},\qquad \frac{2}{3}-\frac{\gamma}{q^+} \geq 0 
\end{align}
it follows that
\[
\exp(\gamma\Phi(\tau)/2)\left(\frac{\overline{C}}{(\Phi(\tau)/2)^{p-1}}\right)^{1-\frac{\gamma}{q^+}}=\left(\frac{\overline{C}}{(\Phi(\tau)/2)^{p-1}}\right)^{\frac{2}{3}-\frac{\gamma}{q^+}} \leq 1,
\]
we therefore gain
\begin{equation*}
\fiint_{Q^+_{\tau \rho}}(v^+)^{q^+}\dxt \leq 2\exp(\gamma \Phi(\tau)/2)
\end{equation*}
with $\gamma$ being as in~\eqref{mainthm eq.2}. 
\medskip

\emph{Step 3: Conclusion.} \quad A careful re-reading of the proof of Lemma~\ref{local bounds} shows that the lower and upper bounds for $\sigma$ and $\tau$ can be replaced by $\delta$ and $\frac{1+\delta}{2}$, respectively. Now we apply Lemma~\ref{local bounds} thereby obtaining, whenever $\delta \leq \sigma<\tau\leq\frac{1+\delta}{2}$
\begin{align*}
\exp\Phi(\sigma)&=\left(\fiint_{Q^+_{\sigma \rho}}(v^+)^{q^+}\dxt \right)^{\frac{1}{q^+}} \leq \sup_{Q^+_{\sigma \rho}}v^+ \\[3mm]
&\leq c \left(\sigma^{-(1-s)p}\frac{1+\rho^{(1-s)p}}{(\tau-\sigma)^{n+sp}}\right)^{\frac{n+p}{p\gamma}}\left(\fiint_{Q^+_{\tau \rho}}(v^+)^{\gamma}\dxt \right)^{\frac{1}{\gamma}} \\[3mm]
&\leq \left(c\delta^{-(1-s)p}\frac{1+\rho^{(1-s)p}}{(\tau-\sigma)^{n+sp}}\right)^{\frac{n+p}{p\gamma}}\left(2\exp(\gamma \Phi(\tau)/2)\right)^{\frac{1}{\gamma}},
\end{align*}
which together with the fact that $\rho \leq 1$ and~\eqref{mainthm eq.2} yields that
\begin{align*}
\Phi(\sigma) &\leq \frac{1}{\gamma} \left(\frac{c\delta^{-(1-s)p}}{(\tau-\sigma)^{n+sp}}\right)^{\frac{n+p}{p}}+\frac{\Phi(\tau)}{2} \\[3mm]
&=\frac{\Phi(\tau)}{2}\left[1+\frac{3(n+p)}{p}\frac{\log \left(\dfrac{c\delta^{-(1-s)p}}{(\tau-\sigma)^{n+sp}}\right)}{\log\left(\dfrac{(\Phi(\tau)/2)^{p-1}}{\overline{C}}\right)}\right].
\end{align*}
We further select a suitably large $\Phi(\tau)$ such that
\[
\frac{3(n+p)}{p}\frac{\log \left(\dfrac{c\delta^{-(1-s)p}}{(\tau-\sigma)^{n+sp}}\right)}{\log\left(\dfrac{(\Phi(\tau)/2)^{p-1}}{\overline{C}}\right)} \leq \frac{1}{2} \iff \Phi(\tau) \geq \underbrace{2\,\overline{C}^{\frac{1}{p-1}}}_{=:\,\overline{C}_0}\left(\frac{c\delta^{-(1-s)p}}{(\tau-\sigma)^{n+sp}}\right)^{\frac{6(n+p)}{p(p-1)}}
\]
and therefore, we gain, whenever $\delta \leq \sigma<\tau\leq \frac{1+\delta}{2}$
\begin{equation*}
\Phi(\sigma) \leq \frac{3}{4}\Phi(\tau)
\end{equation*}
provided $\Phi(\tau) \geq \overline{C}_0\left(\dfrac{c\delta^{-(1-s)p}}{(\tau-\sigma)^{n+sp}}\right)^{\frac{6(n+p)}{p(p-1)}}$. 
\medskip

In the opposite case $\Phi(\tau) < \overline{C}_0\left(\dfrac{c\delta^{-(1-s)p}}{(\tau-\sigma)^{n+sp}}\right)^{\frac{6(n+p)}{p(p-1)}}$, we infer that
\begin{align*}
\psi(\sigma)=\left(\Phi(\sigma)-\Phi(\tau)\right)+\Phi(\tau)&=\frac{1}{q^+}\log \left(\frac{\displaystyle \fiint_{Q^+_{\sigma\rho}}(v^+)^{q^+}\dxt}{\displaystyle\fiint_{Q^+_{\tau\rho}}(v^+)^{q^+}\dxt}\right)+\Phi(\tau) \\[3mm]
&\leq \frac{1}{q^+}\log \left(\frac{\displaystyle\frac{1}{|Q^+_{\sigma\rho}|}\iint_{Q^+_{\tau\rho}}(v^+)^{q^+}\dxt}{\displaystyle\frac{1}{|Q^+_{\tau\rho}|}\iint_{Q^+_{\tau\rho}}(v^+)^{q^+}\dxt}\right)+\Phi(\tau) \\[3mm]
&=\frac{n+p}{q^+}\log\left(\frac{\tau}{\sigma}\right)+\Phi(\tau) \\[3mm]
&\leq \frac{n+p}{q^+}\log\left(\frac{1+\delta}{2\delta}\right)+\overline{C}_0\left(\dfrac{c\delta^{-(1-s)p}}{(\tau-\sigma)^{n+sp}}\right)^{\frac{6(n+p)}{p(p-1)}}
\end{align*}
holds whenever $\delta \leq \sigma<\tau\leq \frac{1+\delta}{2}$.
Bearing in mind these observations, we conclude that
\[
\Phi(\sigma) \leq \frac{3}{4}\Phi(\tau)+\dfrac{A}{(\tau-\sigma)^{(n+sp)\frac{6(n+p)}{p(p-1)}}}+\frac{n+p}{q^+}\log\left(\frac{1+\delta}{2\delta}\right)
\]
with $A=A(n,s,p,c_0,c_1,\Lambda,\delta)$. As a consequence, applying Lemma~\ref{iteration lemma} with $\mathcal{Z}(\sigma)=\Phi(\sigma)$, we have
\[
\left(\fiint_{Q^+_{\delta \rho}}(v^+)^{q^+}\dxt\right)^{\frac{1}{q^+}}=\Phi(\delta) \leq C(n,s,p,c_0,c_1,\Lambda,\delta)\left[\frac{1}{(1-\delta)^{(n+sp)\frac{6(n+p)}{p(p-1)}}}+\frac{1}{q^+}\log\left(\frac{1+\delta}{2\delta}\right)\right]
\]
Since the exponent $q^+$ is arbitrary, we are allowed to pass to the limit $q^+ \nearrow \infty$ in the above display, and this finally yields that
\begin{equation}\label{mainthm eq.4}
\sup_{Q^+_{\delta\rho}}v^+ \leq C(n,s,p,c_0,c_1,\Lambda,\delta).
\end{equation}
Similarly, for term $\Psi(\tau)$ we estimate that
\[
\fiint_{Q^-_{\tau\rho}}(v^-)^\gamma\dxt \leq 2\exp(\gamma\Psi(\tau)/2)
\]
with suitable choice $\gamma$ and large enough $\Psi$ in the following:
\[
\frac{2}{3}q^- \geq \gamma=\frac{2}{3}\Psi(\tau)^{-1}\log\left(\frac{(\Psi(\tau)/2)^{p-1}}{\overline{C}}\right) \quad \textrm{and}\quad 0<\log\left(\frac{(\Psi(\tau)/2)^{p-1}}{\overline{C}}\right) \leq q^- \Psi(\tau).
\]
Noticing that Lemma~\ref{reverse lemma} is valid for every $0<\gamma<q^-<\frac{n+p}{n}(p-1)$ and repeating the above proof, we gain, for $0<q^-<\frac{n+p}{n}(p-1)$,
\[
\Psi(\sigma) \leq \frac{3}{4}\Psi(\tau)+\dfrac{B}{(\tau-\sigma)^{(n+sp)\frac{6(n+p)}{p(p-1)}}}+\frac{n+p}{q^-}\log\left(\frac{1+\delta}{2\delta}\right)
\]
with $B=B(n,s,p,c_0,c_1,\Lambda,\delta)$. Thus, Lemma~\ref{iteration lemma} implies that
\begin{align}\label{mainthm eq.5}
\left(\fiint_{Q^-_{\delta \rho}}(v^-)^{q^-}\dxt\right)^{\frac{1}{q^-}}=\Psi(\delta) &\leq B(n,s,p,c_0,c_1,\Lambda,\delta)\left[\frac{1}{(1-\delta)^{(n+sp)\frac{6(n+p)}{p(p-1)}}}+\frac{1}{q^-}\log\left(\frac{1+\delta}{2\delta}\right)\right] \notag\\[3mm]
&=:D(n,s,p,c_0,c_1,\Lambda,\delta,q^-)
\end{align}
From definition of $v^\pm$,~\eqref{mainthm eq.4} and~\eqref{mainthm eq.5}, the assertion finally follows.
\end{proof}
\section{Proof of Theorem~\ref{mainthm2}}\label{Sect. 6}
In this final section we report the proof of Theorem~\ref{mainthm2}.
\begin{proof}
Let $0<\rho \leq 1$. Given $\sigma \in (0,1)$, from Theorem~\ref{mainthm} with $\delta=\frac{1+\sigma}{2}$ it readily follows that
\[
\left(\fiint_{Q_{\frac{1+\sigma}{2}\rho}^-(z_0)}u^q\dxt\right)^{\frac{1}{q}} \leq C \inf_{Q^+_{\frac{1+\sigma}{2} \rho}(z_0)}u.
\]
Since $u$ is a weak subsolution to~\eqref{maineq}, Proposition~\ref{local bounds} yields that
\[
\sup_{Q^-_{\sigma\rho}(z_0)}u \leq \frac{c}{(1-\sigma)^{(n+sp)\frac{n+p}{nq}}}\left(\fiint_{Q_{\frac{1+\sigma}{2}\rho}^-(z_0)}u^q\dxt\right)^{\frac{1}{q}},
\]
which together with the fact that $\inf \limits_{Q^+_{\frac{1+\sigma}{2} \rho}(z_0)}u \leq \inf \limits_{Q^+_{\sigma\rho}(z_0)}u$ concludes the proof.
\end{proof} 
\makeatletter
\renewcommand{\thesection}{\Alph{section}.\arabic{subsection}}
\makeatother

\appendix

\makeatletter
\renewcommand{\thefigure}{\Alph{section}.\arabic{figure}}
\@addtoreset{figure}{section}
\makeatother

\section{Proof of Lemma~\ref{Caccioppoli1}}\label{Appendix A}

\begin{proof}[\normalfont \textbf{Proof of Lemma~\ref{Caccioppoli1}}]
Although the argument used here is almost same as Proposition~\ref{log-type Caccioppoli} and Lemma~\ref{supsub} (also~\cite[Proposition 3.1]{Nakamura}), for the sake of completeness we shall nevertheless give the full proof. For this, we divide it into several steps.
\medskip

\emph{Step 1: Beginning.}\quad Let $t_1 \in (t_0-\rho^p,t_0)$ be an arbitrary number. Given $\delta>0$ small, take the cutoff function $\psi_\delta(t)$ with respect to time, defined by
\[
\psi_\delta(t):=\begin{cases}
0,\quad &\,\,\,t \in (t_0-\rho^p, t_1), \\
\frac{1}{\delta}(t-t_1), \quad &\,\,\,t \in [t_1, t_1+\delta),\\
1,\quad &\,\,\,t \in [t_1+\delta, t_0).
\end{cases}
\]
In the weak formulation~\eqref{D2'}, we switch the test function $\varphi$ to $\varphi^p\psi_\delta(t)u^{-\varepsilon}$ with nonnegative $\varphi \in C^\infty_0(Q^-_\rho(z_0))$. Then the evolutional term is divided into two terms as follows:
\begin{align*}
\iint_{\Omega_T}\partial_t[|u|^{p-2}u]_h\varphi^p\psi_\delta(t)u^{-\varepsilon}\dxt&=\iint_{Q^-_\rho(z_0)}\varphi^p\psi_\delta(t)\partial_t[u^{p-1}]_h\left(u^{-\varepsilon}-[u^{p-1}]_h^{-\frac{\varepsilon}{p-1}}\right)\dxt \\[3mm]
&\quad \quad \quad+\iint_{Q^-_\rho(z_0)}\varphi^p\psi_\delta(t)\partial_t[u^{p-1}]_h[u^{p-1}]_h^{-\frac{\varepsilon}{p-1}}\dxt \\[3mm]
&=:\mathbf{I}_1+\mathbf{I}_2,
\end{align*}
where the definition of $\mathbf{I}_1$ and $\mathbf{I}_2$ are obvious form the context. Abbreviating 
\[
f_h:=[u^{p-1}]_h^{\frac{1}{p-1}} \quad \iff \quad [u^{p-1}]_h=f_h^{p-1},
\]
and  Lemma~\ref{mollification lemma}-(ii) and~\eqref{algs'} with $\alpha=1+\frac{p-1}{\varepsilon}$ in Lemma~\ref{Algs'} imply that
\begin{align*}
\mathbf{I}_1&=\iint_{Q^-_\rho(z_0)}\varphi^p\psi_\delta(t)\frac{u^{p-1}-[u^{p-1}]_h}{h}\cdot \frac{[u^{p-1}]_h^{\frac{\varepsilon}{p-1}}-u^\varepsilon}{[u^{p-1}]_h^{\frac{\varepsilon}{p-1}}u^\varepsilon}\dxt \\[3mm]
&=\iint_{Q^-_\rho(z_0)}\varphi^p\psi_\delta(t)\frac{u^{p-1}-f_h^{p-1}}{h}\cdot \frac{f_h^\varepsilon-u^\varepsilon}{f_h^\varepsilon u^\varepsilon}\dxt \leq 0.
\end{align*}
From integration by parts, it follows that
\begin{align*}
\mathbf{I}_2&=\frac{p-1}{p-1-\varepsilon}\iint_{Q^-_\rho(z_0)}\varphi^p\psi_\delta(t)\partial_t[u^{p-1}]_h^{\frac{p-1-\varepsilon}{p-1}}\dxt \\[3mm]
&=-\frac{p-1}{p-1-\varepsilon}\iint_{Q^-_\rho(z_0)}\left(p\varphi^{p-1}\varphi_t\psi_\delta+\varphi^p\psi\delta^\prime \right)[u^{p-1}]_h^{\frac{p-1-\varepsilon}{p-1}}\dxt.
\end{align*}
The preceding estimates above and the fact that $[u^{p-1}]_h^{\frac{1}{p-1}} \to u$ in $L^p(\Omega_T)$ by Lemma~\ref{mollification lemma}-(iii) give that
\begin{align}\label{a1}
&\limsup_{h\searrow 0}\iint_{\Omega_T}\partial_t[|u|^{p-2}u]_h\varphi^p\psi_\delta(t)u^{-\varepsilon}\dxt \notag \\[3mm]
&\leq \limsup_{h\searrow 0}\left(\mathbf{I}_1+\mathbf{I}_2\right) \notag\\[3mm]
&\leq -\frac{p-1}{p-1-\varepsilon}\iint_{Q^-_\rho(z_0)}\left(p\varphi^{p-1}\varphi_t\psi_\delta+\varphi^p\psi_\delta^\prime \right)u^{p-1-\varepsilon}\dxt \notag\\[3mm]
&\leq  -\frac{p-1}{p-1-\varepsilon}\,\dashint_{t_1}^{t_1+\delta}\int_{B_\rho(x_0)}u^{p-1-\varepsilon}\varphi^p\dxt+\frac{p(p-1)}{p-1-\varepsilon}\iint_{Q^-_\rho(z_0)}\varphi^{p-1}|\varphi_t|u^{p-1-\varepsilon}\psi_\delta\dxt.
\end{align}

The spatial term can be dealt with comparatively easily. Indeed, by the use of convergence $[|Du|^{p-2}Du]_h \to |Du|^{p-2}Du$ in $L^{\frac{p}{p-1}}(\Omega_T)$ as $h\searrow 0$ via Lemma~\ref{mollification lemma}-(ii) and Young's inequality with the exponents $\left(\frac{p}{p-1},p\right)$ yields that, for any $\theta>0$, 
\begin{align*}
&\lim_{h\searrow 0}\iint_{\Omega_T}[|Du|^{p-2}Du]_h\cdot D\left(\varphi^p\psi_\delta(t)u^{-\varepsilon}\right)\dxt \\[3mm]
&=\iint_{\Omega_T}|Du|^{p-2}Du\cdot D\left(\varphi^p\psi_\delta(t)u^{-\varepsilon}\right)\dxt \\[3mm]
&=\iint_{\Omega_T}|Du|^{p-2}Du\cdot \left(p\varphi^{p-1}D\varphi\,u^{-\varepsilon}+\varphi^pDu^{-\varepsilon}\right)\dxt\\[3mm]
&\leq p\iint_{Q^-_\rho(z_0)}\Big(|Du|u^{-\frac{\varepsilon+1}{p}}\varphi\psi_\delta^{\frac{1}{p}}\Big)^{p-1}\Big(|D\varphi|u^{-\varepsilon +(\varepsilon+1)\frac{p-1}{p}}\psi_\delta^{\frac{1}{p}}\Big)\dxt\\[3mm]
&\quad \quad \quad \quad -\varepsilon\iint_{\Omega_T}|Du|^pu^{-\varepsilon-1}\varphi^p\psi_\delta\dxt \\[3mm]
&\leq \big[(p-1)\theta-\varepsilon\big]\iint_{\Omega_T}|Du|^pu^{-\varepsilon-1}\varphi^p\psi_\delta\dxt+c(\theta)\iint_{Q^-_\rho(z_0)}|D\varphi|^pu^{-\varepsilon p+(\varepsilon+1)(p-1)}\varphi^p\psi_\delta\dxt,
\end{align*}
which, by taking $\theta=\frac{\varepsilon}{2(p-1)}$, yields in particular that
\begin{align}\label{a2}
&\lim_{h\searrow 0}\iint_{\Omega_T}[|Du|^{p-2}Du]_h\cdot D\left(\varphi^p\psi_\delta(t)u^{-\varepsilon}\right)\dxt \notag\\[3mm]
&\leq -\frac{\varepsilon}{2}\iint_{\Omega_T}|Du|^pu^{-\varepsilon-1}\varphi^p\psi_\delta\dx+c(\varepsilon)\iint_{Q^-_\rho(z_0)}|D\varphi|^pu^{p-1-\varepsilon}\psi_\delta\dxt,
\end{align}
where we have computed that $-\varepsilon p+(\varepsilon+1)(p-1)=p-1-\varepsilon$.
\medskip

\emph{Step 2: Fractional term.}\quad Firstly, we will show that
\begin{align}\label{a3}
&\int_0^T\iint_{\bR^n\times \bR^n}\left[U(x,y,t)K(x,y,t)\right]_h\left(\phi(x,t)-\phi(y,t)\right)\dxyt \notag \\[3mm]
&\to \int_0^T\iint_{\bR^n\times \bR^n}U(x,y,t)K(x,y,t)\left(\phi(x,t)-\phi(y,t)\right)\dxyt 
\end{align}
in the limit $h\searrow 0$, where we abbreviated $\phi\equiv \varphi^p\psi_\delta(t)u^{-\varepsilon}$ for short. In order to prove~\eqref{a3} we consider the quantity
\begin{align*}
(\mathbf{II})_h&:=\left|\int_0^T\iint_{D_\Omega}\Big(\left[U(x,y,t)K(x,y,t)\right]_h-U(x,y,t)K(x,y,t)\Big)\left(\phi(x,t)-\phi(y,t)\right)\dxyt \right| \\[3mm]
&\leq \int_0^T\iint_{\Omega \times \Omega}\big|\cdots \big|\dxyt+2\int_0^T\iint_{\Omega \times \Omega^c}\big|\cdots \big|\dxyt \\[3mm]
&=:(\mathbf{II}_1)_h+2(\mathbf{II}_2)_h,
\end{align*}
with denoting $D_\Omega:=(\bR^n \times \bR^n) \setminus (\Omega^c \times \Omega^c)$, where the definition of $(\mathbf{II}_1)_h$ and $(\mathbf{II}_2)_h$ are clear from the context. We now claim the following: As $h \searrow 0$, 
\begin{equation}\label{a4}
\Big[U(x,y,t)K(x,y,t)|x-y|^{\frac{1}{p}(n+sp)}\Big]_h \to U(x,y,t)K(x,y,t)|x-y|^{\frac{1}{p}(n+sp)}\quad \textrm{in}\quad L^{\frac{p}{p-1}}(\Omega^2_T),
\end{equation}
where $\Omega^2_T:=\Omega \times \Omega \times (0,T)$. Indeed, since \[
\Big\|U(x,y,t)K(x,y,t)|x-y|^{\frac{1}{p}(n+sp)}\Big\|_{L^{\frac{p}{p-1}}(\Omega^2_T)}^{\frac{p}{p-1}}<\infty
\]
 Lemma~\ref{mollification lemma}-(i) with $E=\Omega^2$ implies~\eqref{a4}. Therefore, using~\eqref{a4}, H\"{o}lder's inequality and Lemma~\ref{Wsp ineq.}, we have
 \begin{align*}
 (\mathbf{II}_1)_h &=\int_0^T\iint_{\Omega \times \Omega}\Bigg|\left[U(x,y,t)K(x,y,t)|x-y|^{\frac{1}{p}(n+sp)}\right]_h-U(x,y,t)K(x,y,t)|x-y|^{\frac{1}{p}(n+sp)}\Bigg| \\[3mm]
 &\quad \quad \quad \quad \quad \times \frac{\left|\phi(x,t)-\phi(y,t)\right|}{|x-y|^{\frac{1}{p}(n+sp)}}\dxyt \\[3mm]
 &\leq c\left(\int_0^T\iint_{\Omega \times \Omega}\Bigg|\left[UK(x,y,t)|x-y|^{\frac{1}{p}(n+sp)}\right]_h-UK(x,y,t)|x-y|^{\frac{1}{p}(n+sp)}\Bigg|^{\frac{p}{p-1}}\dxyt\right)^{\frac{p-1}{p}} \\[3mm]
 &\quad \quad \quad \quad \quad \times \left(\int_{\Omega_T}|D\phi(x)|^p\dxt\right)^{\frac{1}{p}} \\[3mm]
 &\to 0
 \end{align*}
as $h\searrow 0$, where we used the shorthand notation $UK(x,y,t)=U(x,y,t)K(x,y,t)$. 
\medskip

We next prove that $ (\mathbf{II}_1)_h \to 0$.  Take a ball $B_R\equiv B_R(0)$ satisfying $B_R \Supset \Omega$. By $\phi(y,t)=0$ for any $(y,t) \in (B_R\setminus \Omega)\times (0,T)$ and H\"{o}lder's inequality, we obtain 
{\small
\begin{align}\label{a5}
(\mathbf{II}_2)_h(R) &:=\int_0^T\iint_{\Omega \times (B_R\setminus \Omega)}\Big|\left[UK(x,y,t)|x-y|^{\frac{1}{p}(n+sp)}\right]_h-UK(x,y,t)|x-y|^{\frac{1}{p}(n+sp)}\Big| \notag\\[3mm]
 &\quad \quad \quad \quad \quad \times \frac{\left|\phi(x,t)\right|}{|x-y|^{\frac{1}{p}(n+sp)}}\dxyt \notag\\[3mm]
 &\leq \left(\int_0^T\iint_{\Omega \times (B_R\setminus \Omega)} \frac{\left|\phi(x,t)\right|^p}{|x-y|^{n+sp}}\dxyt\right)^{\frac{1}{p}} \notag\\[3mm]
 &\times \left(\int_0^T\iint_{\Omega \times (B_R\setminus \Omega)}\Bigg|\left[UK(x,y,t)|x-y|^{\frac{1}{p}(n+sp)}\right]_h-UK(x,y,t)|x-y|^{\frac{1}{p}(n+sp)}\Bigg|^{\frac{p}{p-1}}\dxyt\right)^{\frac{p-1}{p}}\notag\\[3mm]
 &\leq (\mathbf{III})^{\frac{1}{p}}(\mathbf{IV})^{\frac{p-1}{p}},
\end{align}
}

\noindent
where
\[
\mathbf{III}:=\int_0^T\iint_{\Omega \times (B_R\setminus \Omega)} \frac{\left|\phi(x,t)\right|^p}{|x-y|^{n+sp}}\dxyt
\]
and
\[
\mathbf{IV}:=\int_0^T\iint_{\Omega \times (B_R\setminus \Omega)}\Big|\left[UK(x,y,t)|x-y|^{\frac{1}{p}(n+sp)}\right]_h-UK(x,y,t)|x-y|^{\frac{1}{p}(n+sp)}\Big|^{\frac{p}{p-1}}\dxyt.
\]
Since
\[
|x-y| \geq d_R:=\mathrm{dist}\left(\supp\,\phi(\cdot,t),\, B_R \setminus\Omega \right)\qquad \forall (x,y) \in \supp\,\phi(\cdot,t) \times (B_R\setminus \Omega)
\]
and $\supp\,\phi=Q_\rho^-(z_0)$, 
we estimate
 \begin{align}\label{a6}
 \mathbf{III}&=\iint_{Q_\rho^-(z_0)}\left(\int_{B_R\setminus \Omega}\frac{\dy}{|x-y|^{n+sp}} \right)|\phi(x,t)|\dxt \notag\\[3mm]
 &\leq m^{-\varepsilon}\|\varphi\|_{L^\infty(Q_\rho^-(z_0))}|Q_\rho^-(z_0)|\left(\int_{B_{d_R}(x)^c}\frac{\dy}{|x-y|^{n+sp}} \right)\notag\\[3mm]
 &=m^{-\varepsilon}\|\varphi\|_{L^\infty(Q_\rho^-(z_0))}|Q_\rho^-(z_0)|\cdot \frac{c(n)}{sp}d_R^{-sp}.
 \end{align}
 Next, we estimate the integrand of $\mathbf{IV}$:
 \begin{align*}
 &\Bigg|\left[UK(x,y,t)|x-y|^{\frac{1}{p}(n+sp)}\right]_h-UK(x,y,t)|x-y|^{\frac{1}{p}(n+sp)}\Bigg|^{\frac{p}{p-1}} \\[3mm]
 &\leq c(p)\left(\left|\left[UK(x,y,t)|x-y|^{\frac{1}{p}(n+sp)}\right]_h\right|^{\frac{p}{p-1}}+\left|UK(x,y,t)|x-y|^{\frac{1}{p}(n+sp)}\right|^{\frac{p}{p-1}}\right).
 \end{align*}
 By the use of H\"older's inequality and the assumption~\eqref{S3}, we infer that
 \begin{align*}
 \left|\left[UK(x,y,t)|x-y|^{\frac{1}{p}(n+sp)}\right]_h\right|&=\left|\frac{1}{h}\int_0^te^{\frac{s-t}{h}}UK(x,y,s)|x-y|^{\frac{1}{p}(n+sp)}\ds\right| \\[3mm]
 &\leq \Lambda \cdot \frac{1}{h}\int_0^te^{\frac{s-t}{h}}\frac{|U(x,y,s)|}{|x-y|^{\frac{p-1}{p}(n+sp)}}\ds \\[3mm]
 &\leq \Lambda \left(\frac{1}{h}\int_0^te^{\frac{s-t}{h}}\ds\right)^{\frac{1}{p}}\left(\frac{1}{h}\int_0^te^{\frac{s-t}{h}}\frac{|U(x,y,s)|^{\frac{p}{p-1}}}{|x-y|^{n+sp}}\ds\right)^{\frac{p-1}{p}} \\[3mm]
 &=\Lambda (1-e^{-\frac{t}{h}})^{\frac{1}{p}}\left[\frac{|U(x,y,t)|^{\frac{p}{p-1}}}{|x-y|^{n+sp}}\right]_h^{\frac{p-1}{p}},
 \end{align*}
 that is, 
 \[
 \left|\left[UK(x,y,t)|x-y|^{\frac{1}{p}(n+sp)}\right]_h\right|^{\frac{p}{p-1}}\leq \Lambda^{\frac{p}{p-1}} (1-e^{-\frac{t}{h}})^{\frac{1}{p-1}}\left[\frac{|U(x,y,t)|^{\frac{p}{p-1}}}{|x-y|^{n+sp}}\right]_h.
 \]
 Similarly, we have
 \[
 \left|UK(x,y,t)|x-y|^{\frac{1}{p}(n+sp)}\right|^{\frac{p}{p-1}}\leq \Lambda^{\frac{p}{p-1}}\frac{|U(x,y,t)|^{\frac{p}{p-1}}}{|x-y|^{n+sp}}.
 \]
 Collecting the preceding estimates above, the integrand of $\mathbf{IV}$ is estimated as
 \begin{align*}
 &\Bigg|\left[UK(x,y,t)|x-y|^{\frac{1}{p}(n+sp)}\right]_h-UK(x,y,t)|x-y|^{\frac{1}{p}(n+sp)}\Bigg|^{\frac{p}{p-1}} \\[3mm]
 & \leq c\Lambda^{\frac{p}{p-1}} \left(\left[\frac{|U(x,y,t)|^{\frac{p}{p-1}}}{|x-y|^{n+sp}}\right]_h+\frac{|U(x,y,t)|^{\frac{p}{p-1}}}{|x-y|^{n+sp}}\right) \\[3mm]
 &=c\Lambda^{\frac{p}{p-1}} \left(\left[\frac{\big|u(x,t)-u(y,t)\big|^{p}}{|x-y|^{n+sp}}\right]_h+\frac{\big|u(x,t)-u(y,t)\big|^{p}}{|x-y|^{n+sp}}\right) \\[3mm]
 &=:c\Lambda^{\frac{p}{p-1}}W_h(x,y,t).
 \end{align*}
 Since $W_h \in L^1(\Omega \times (B_R \setminus \Omega )\times (0,T))$ holds true, Lemma~\ref{mollification lemma}-(i) applied with $E=\Omega \times (B_R \setminus \Omega )$ yields that
 \[
 W_h \to W:=2\frac{\big|u(x,t)-u(y,t)\big|^{p}}{|x-y|^{n+sp}}\quad \textrm{in}\,\,\,L^1\left(\Omega \times (B_R\setminus \Omega)\times (0,T)\right)
 \]
 as $h\searrow 0$. Therefore using the dominated convergence theorem, we conclude that
 \begin{equation}\label{a7}
 \lim \limits_{h\searrow 0}\mathbf{IV}=0.
 \end{equation}
 Merging~\eqref{a6} with~\eqref{a7} in~\eqref{a5} and, subsequently, sending $R \nearrow \infty$, we finally arrive at the resulting convergence~\eqref{a3}.
 \medskip
 
 \emph{Step 3: Taking the limit as $h\searrow 0$ and $\delta\searrow 0$.}\quad Since by $u \geq m>0$ in $\bR^n \times (t_0-\rho^p,t_0)$, we have
 \[
\lim_{h\searrow 0}\int_\Omega |u|^{p-2}u(0)\left(\frac{1}{h}\int_0^Te^{-\frac{s}{h}}\phi(x,s)\ds\right)\dx=0
 \]
 with $\phi=\varphi^p\psi_\delta(t)u^{-\varepsilon}$. Therefore, combining this with the observations~\eqref{a1}--\eqref{a3} and passing to the limit as $h\searrow 0$ in the weak formulation~\eqref{D2'} with the testing function $\varphi^p\psi_\delta(t)u^{-\varepsilon}$, we gain
 \begin{align*}
 &\frac{p-1}{p-1-\varepsilon}\,\dashint_{t_1}^{t_1+\delta}\int_{B_\rho(x_0)}u^{p-1-\varepsilon}\varphi^p\dxt+\frac{\varepsilon}{2}\iint_{Q^-_\rho(z_0)}|Du|^pu^{-\varepsilon-1}\varphi^p\psi_\delta\dxt\\[3mm]
 &\leq \frac{p(p-1)}{p-1-\varepsilon}\iint_{Q^-_\rho(z_0)}\varphi^{p-1}|\varphi_t|u^{p-1-\varepsilon}\psi_\delta\dxt+c\iint_{Q^-_\rho(z_0)}|D\varphi|^pu^{p-1-\varepsilon}\psi_\delta\dxt
\\[3mm]
&\quad +\int_{t_1}^{t_0}\iint_{\bR^n\times \bR^n}UK(x,y,t)\left(\varphi^p(x,t)u(x,t)^{-\varepsilon}-\varphi^p(y,t)u(y,t)^{-\varepsilon}\right)\psi_\delta(t)\dxyt.
\end{align*}
Passing to the limit $\delta\searrow 0$ combined with Lebegue's differential theorem and the dominated convergence theorem implies that
 \begin{align}\label{a8}
 &\int_{B_\rho(x_0)\times \{t_1\}}u^{p-1-\varepsilon}\varphi^p\dx+\int_{t_1}^{t_0}\int_{B_\rho(x_0)}|Du|^pu^{-\varepsilon-1}\varphi^p\dxt \notag\\[3mm]
 &\leq \left(p+\tfrac{2p(p-1)}{\varepsilon(p-1-\varepsilon)}\right)\int_{t_1}^{t_0}\int_{B_\rho(x_0)}\varphi^{p-1}|\varphi_t|u^{p-1-\varepsilon}\dxt  \notag\\[3mm]
 &\quad \quad \quad +c\left(\tfrac{p-1-\varepsilon}{p-1}+\tfrac{2}{\varepsilon}\right) \int_{t_1}^{t_0}\int_{B_\rho(x_0)}|D\varphi|^pu^{p-1-\varepsilon}\dxt
\notag\\[3mm]
&\quad +c\left(\tfrac{p-1-\varepsilon}{p-1}+\tfrac{2}{\varepsilon}\right)\int_{t_1}^{t_0}\iint_{\bR^n\times \bR^n}UK(x,y,t)\left(\varphi^p(x,t)u(x,t)^{-\varepsilon}-\varphi^p(y,t)u(y,t)^{-\varepsilon}\right)\dxyt.
\end{align}
Since $\frac{p-1-\varepsilon}{p-1}<1<p$ and $\frac{p-1}{p-1-\varepsilon}>1$ the constant
\[
C(\varepsilon,p):=c\left(p+\frac{2p(p-1)}{\varepsilon(p-1-\varepsilon)}\right)
\]
bounds the all constants appearing on the right-hand side of~\eqref{a8}, and we know that $C(\varepsilon,p)$ blows up as $\varepsilon\searrow 0$ or $\varepsilon \nearrow p-1$. In the first term on the left-hand side of~\eqref{a8} we take the supremum over $t_1 \in (t_0-\rho^p, t_0)$, while in the others we let $t_1\searrow t_0-\rho^p$. This finally leads to
 \begin{align}\label{a9}
 &\sup_{t_1 \in (t_0-\rho^p,t_0)}\int_{B_\rho(x_0)\times \{t_1\}}u^{p-1-\varepsilon}\varphi^p\dx+\iint_{Q_\rho^-(z_0)}|Du|^pu^{-\varepsilon-1}\varphi^p\dxt \notag\\[3mm]
 &\leq C\iint_{Q_\rho^-(z_0)}\varphi^{p-1}|\varphi_t|u^{p-1-\varepsilon}\dxt+C\iint_{Q_\rho^-(z_0)}|D\varphi|^pu^{p-1-\varepsilon}\dxt
\notag\\[3mm]
&\quad +C\int_{t_0-\rho^p}^{t_0}\iint_{\bR^n\times \bR^n}U(x,y,t)K(x,y,t)\left(\varphi^p(x,t)u(x,t)^{-\varepsilon}-\varphi^p(y,t)u(y,t)^{-\varepsilon}\right)\dxyt.
\end{align}

\medskip

\emph{Step 4: Conclusion}.\quad In this final step, we are ready to conclude the whole proof, again by estimating the fractional term appearing on the right-hand side of~\eqref{a9}. For this, we estimate separately as follows:
\begin{align*}
\mathbf{V}&:=\int_{t_0-\rho^p}^{t_0}\iint_{\bR^n\times \bR^n}U(x,y,t)K(x,y,t)\left(\varphi^p(x,t)u(x,t)^{-\varepsilon}-\varphi^p(y,t)u(y,t)^{-\varepsilon}\right)\dxyt
\\[3mm]
&=\int_{t_0-\rho^p}^{t_0}\iint_{B_\rho(x_0) \times B_\rho(x_0) }(\cdots)\dxyt+2\int_{t_0-\rho^p}^{t_0}\iint_{B_\rho(x_0) \times (\bR^n \setminus B_\rho(x_0) )}(\cdots)\dxyt \\[3mm]
&=:\mathbf{V}_1+2\mathbf{V}_2,
\end{align*}
with the obvious meaning of $\mathbf{V}_1$ and $\mathbf{V}_2$. Applying Lemma~\ref{fractional est.} with $a=u(y,t)$, $\tau_1=\varphi(y,t)$ and $b=u(y,t)$, $\tau_2=\varphi(x,t)$, the integrand of $\mathbf{V}_1$ is estimated as
\begin{align*}
&U(x,y,t)K(x,y,t)\left(\varphi^p(x,t)u(x,t)^{-\varepsilon}-\varphi^p(y,t)u(y,t)^{-\varepsilon}\right)\\[3mm]
&\leq -\Lambda c(p) \zeta(\varepsilon)\frac{\Big|\varphi(x,t)u(x,t)^{\frac{\alpha}{p}}-\varphi(y,t)u(y,t)^{\frac{\alpha}{p}}\Big|^p}{|x-y|^{n+sp}} \\[3mm]
&\quad +\Lambda\left(\zeta(\varepsilon)+1+\varepsilon^{-(p-1)}\right)\big|\varphi(x,t)-\varphi(y,t)\big|^p\cdot \frac{u(x,t)^\alpha+u(y,t)^\alpha}{|x-y|^{n+sp}}
\end{align*}
and therefore 
{\small
\begin{align*}
\mathbf{V}_1 &\leq  -\Lambda c(p) \zeta(\varepsilon)\int_{t_0-\rho^p}^{t_0}\iint_{B_\rho(x_0) \times B_\rho(x_0)}\frac{\Big|\varphi(x,t)u(x,t)^{\frac{\alpha}{p}}-\varphi(y,t)u(y,t)^{\frac{\alpha}{p}}\Big|^p}{|x-y|^{n+sp}} \dxyt \\[3mm]
&\quad +\Lambda\left(\zeta(\varepsilon)+1+\varepsilon^{-(p-1)}\right)\int_{t_0-\rho^p}^{t_0}\iint_{B_\rho(x_0) \times B_\rho(x_0)}\frac{\big|\varphi(x,t)-\varphi(y,t)\big|^p\left(u(x,t)^\alpha+u(y,t)^\alpha\right)}{|x-y|^{n+sp}}\dxyt.
\end{align*}
}

\noindent
On the other hand, using the positivity of $u$, the term $\mathbf{V}_2$ is estimated as
\begin{align*}
\mathbf{V}_2&\leq \Lambda \int_{t_0-\rho^p}^{t_0}\iint_{B_\rho(x_0) \times (\bR^n \setminus B_\rho(x_0))}\frac{U(x,y,t)}{|x-y|^{n+sp}}\varphi^p(x,t)u(x,t)^{-\varepsilon}\dxyt \\[4mm]
&=\Lambda \int_{t_0-\rho^p}^{t_0}\iint_{B_\rho(x_0) \times (\bR^n \setminus B_\rho(x_0)) \cap \{u(x,t) \,\geq \,u(y,t)\}}(\cdots)\dxyt \\[4mm]
&\quad \quad \quad +\Lambda\underbrace{\int_{t_0-\rho^p}^{t_0}\iint_{B_\rho(x_0) \times (\bR^n \setminus B_\rho(x_0)) \cap \{u(x,t) \,<\,u(y,t)\}}(\cdots)\dxyt}_{<0} \\[4mm]
&\leq \Lambda \int_{t_0-\rho^p}^{t_0}\iint_{B_\rho(x_0) \times (\bR^n \setminus B_\rho(x_0)) \cap \{u(x,t) \,\geq \,u(y,t)\}}\frac{(u(x,t)-u(y,t))^{p-1}}{|x-y|^{n+sp}}\varphi^p(x,t)u(x,t)^{-\varepsilon}\dxyt \\[4mm]
&\leq  \Lambda \int_{t_0-\rho^p}^{t_0}\iint_{B_\rho(x_0) \times (\bR^n \setminus B_\rho(x_0)) \cap \{u(x,t) \,\geq \,u(y,t)\}}\frac{u(x,t)^{p-1-\varepsilon}}{|x-y|^{n+sp}}\varphi^p(x,t)\dxyt \\[4mm]
&\leq \Lambda \left(\sup \limits_{x\in \supp\,\varphi(\cdot,t)}\int_{\bR^n \setminus B_\rho(x_0)}\frac{\dy}{|x-y|^{n+sp}}\right)\iint_{Q_\rho^-(z_0)}u(x,t)^\alpha\varphi^p(x,t)\dxt.
\end{align*}
Joining the preceding estimates for $\mathbf{V}_1$ and $\mathbf{V}_2$ we get
{\small
\begin{align*}
\mathbf{V} &\leq 
-\Lambda c(p) \zeta(\varepsilon)\int_{t_0-\rho^p}^{t_0}\iint_{B_\rho(x_0) \times B_\rho(x_0)}\frac{\Big|\varphi(x,t)u(x,t)^{\frac{\alpha}{p}}-\varphi(y,t)u(y,t)^{\frac{\alpha}{p}}\Big|^p}{|x-y|^{n+sp}} \dxyt \\[4mm]
&\quad +\Lambda\left(\zeta(\varepsilon)+1+\varepsilon^{-(p-1)}\right)\int_{t_0-\rho^p}^{t_0}\iint_{B_\rho(x_0) \times B_\rho(x_0)}\frac{\big|\varphi(x,t)-\varphi(y,t)\big|^p\left(u(x,t)^\alpha+u(y,t)^\alpha\right)}{|x-y|^{n+sp}}\dxyt \\[4mm]
&\quad \quad +2\Lambda \left(\sup \limits_{x\in \supp\,\varphi(\cdot,t)}\int_{\bR^n \setminus B_\rho(x_0)}\frac{\dy}{|x-y|^{n+sp}}\right)\iint_{Q_\rho^-(z_0)}u(x,t)^\alpha\varphi^p(x,t)\dxt.
\end{align*}
}

\noindent
Inserting this estimate back to~\eqref{a9}, we finally arrive at the desired estimate and therefore, the lemma is completely proved.
\end{proof}

\section{Proof of Lemmata~\ref{log-type Caccioppoli lemma} and~\ref{supsub lemma}}\label{Appendix B}

In this final appendix, we report the proof of Lemmata~\ref{log-type Caccioppoli lemma} and~\ref{supsub lemma}.
\begin{proof}[\normalfont\textbf{Proof of Lemma~\ref{log-type Caccioppoli lemma}}]
Since by $u \geq m >0$ in $\bR^n \times (t_1,t_2)$ there holds that, for all $t \in (t_1,t_2)$
\begin{align}~\label{b1}
[u^{p-1}]_h= \frac{1}{h}\int_{0}^{t}e^{\frac{s-t}{h}}u(s)^{p-1}\ds &\geq m^{p-1}\left(\frac{1}{h}\int_0^te^{\frac{s-t}{h}}\ds\right) \notag \\[3mm]
&=m^{p-1}(1-e^{-\frac{t}{h}}).
\end{align}
This together with an elementary estimate $|\log s| \leq \dfrac{|s-1|}{\min\{s,1\}}$ for $s>0$ implies that 
\begin{align*}
\Big|\log[u^{p-1}]_h-\log u^{p-1}\Big|=\left|\log\left(\frac{[u^{p-1}]_h}{u^{p-1}}\right)\right| &\leq \frac{\big|[u^{p-1}]_h-u^{p-1}\big|}{\min\{[u^{p-1}]_h,\,u^{p-1}\}} \\[3mm]
&\leq \frac{1}{m^{2(p-1)}(1-e^{-\frac{t}{h}}) }\big|[u^{p-1}]_h-u^{p-1}\big|
\end{align*}
for every $(x,t) \in \Omega_{t_1,t_2}$. Therefore, Lemma~\ref{mollification lemma}-(ii) concludes that
\[
\Big\|\log[u^{p-1}]_h-\log u^{p-1}\Big\|_{L^{\frac{p}{p-1}}(\Omega_{t_1,t_2})} \leq \frac{1}{m^{2(p-1)}(1-e^{-\frac{t_1}{h}})}\Big\|[u^{p-1}]_h-u^{p-1}\Big\|_{L^{\frac{p}{p-1}}(\Omega_{t_1,t_2})} \to 0,
\]
as desired.
\end{proof}
\begin{proof}[\normalfont \textbf{Proof of Lemma~\ref{supsub lemma}}]
Again, by~\eqref{b1}
\[
\big|[u^{p-1}]_h^{-1}-u^{1-p}\big| \leq \frac{1}{m^{2(p-1)}(1-e^{-\frac{t_1}{h}})}\big|[u^{p-1}]_h-u^{p-1}\big|
\]
holds whenever $(x,t) \in \Omega_{t_1,T}$. Hence
\begin{align*}
\Big\|[u^{p-1}]_h^{-1}-u^{1-p}\Big\|_{L^{\frac{p}{p-1}}(\Omega_{t_1,T})} & \leq \frac{1}{m^{2(p-1)}(1-e^{-\frac{t_1}{h}})}\Big\|[u^{p-1}]_h-u^{p-1}\Big\|_{L^{\frac{p}{p-1}}(\Omega_{t_1,T})} \to 0,\end{align*}
finishing the proof.
\end{proof}


\end{document}